\newcommand{\HE}{Namie of Handling Editor}
\newcommand{\DoS}{Month/Day/Year}
\newcommand{\DoA}{Month/Day/Year}
\newcommand{\CA}{Name of Corresponding Author}
\newcommand{\Names}{Alyssa Adams and Bonnie Jacob}
\newcommand{\Title}{Failed zero forcing and critical sets on directed graphs}
\newtheorem{dfn}[theorem]{Definition}
\newtheorem{prop}[theorem]{Proposition}
\newtheorem{obs}[theorem]{Observation}
\newcommand{\Z}{\operatorname{Z}}
\newcommand{\F}{\operatorname{F}}
\newcommand{\og}[1]{{\overrightarrow{#1}}}
\newcommand{\ogg}{\overrightarrow{G}}
\begin{document}

\bibliographystyle{plain}

\setcounter{page}{1}

\thispagestyle{empty}

 \title{\Title\thanks{Received
 by the editors on \DoS.
 Accepted for publication on \DoA. 
 Handling Editor: \HE. Corresponding Author: \CA}}

\author{
Bonnie Jacob\thanks{Department of Science and Mathematics,
National Technical Institute for the Deaf, Rochester Institute of Technology, 14623, USA
(bcjntm@rit.edu).}
\and
Alyssa Adams\thanks{Department of Mathematics and Statistics, Youngstown State University, Youngstown, OH 44503, USA (anadams02@student.ysu.edu).  Supported by the National Science Foundation under Grant No. 1659299. }
}

\markboth{\Names}{\Title}

\maketitle


\begin{abstract}
Let $D$ be a simple digraph (directed graph) with vertex set $V(D)$ and arc set $A(D)$ where $n=|V(D)|$, and each arc is an ordered pair of distinct vertices.  If $(v,u) \in A(D)$, then  $u$ is considered an \emph{out-neighbor} of $v$ in $D$.  Initially, we designate each vertex to be either filled or empty.  Then, the following color change rule (CCR) is applied: if a filled vertex $v$ has exactly one empty out-neighbor $u$, then $u$ will be filled. The process continues until the CCR does not allow any empty vertex to become filled. If all vertices in $V(D)$ are eventually filled, then the initial set is called a \emph{zero forcing set} (ZFS); if not, it is a \emph{failed zero forcing set} (FZFS).  We introduce the \emph{failed zero forcing number} $\F(D)$ on a digraph, which is the maximum cardinality of any FZFS.  The \emph{zero forcing number}, $\Z(D)$, is the minimum cardinality of any ZFS.  We characterize digraphs that have $\F(D)<\Z(D)$ and determine $\F(D)$ for several classes of digraphs including directed acyclic graphs, weak paths and cycles, and weakly connected line digraphs such as de Bruijn and Kautz digraphs.  We also characterize digraphs with  $\F(D)=n-1$, $\F(D)=n-2$, and $\F(D)=0$, which leads to a characterization of digraphs in which any vertex is a ZFS.  Finally, we show that for any integer $n \geq 3$ and any non-negative integer $k$ with $k <n$, there exists a weak cycle $D$ with $\F(D)=k$.
\end{abstract}

\begin{keywords}
Zero forcing, line digraph, critical set
\end{keywords}

\begin{AMS}
05C50, 15A03
\end{AMS}

\section{Introduction}

In this paper, we study failed zero forcing on simple digraphs (directed graphs).  
Zero forcing problems, including failed zero forcing, are based on a color change rule (CCR) applied to an initial coloring on the vertex set, where there are only two colors: filled or empty.  The CCR is: if a filled vertex has exactly one empty out-neighbor, then the out-neighbor will change from empty to filled.  In \cite{berliner2017zero}, the authors relate this to rumor spreading: if Astrid knows a secret, and all of Astrid's friends except Zoe know the secret, then Astrid will share the secret with Zoe.  The zero forcing number is the smallest number of vertices that initially must be filled in order for all vertices in the digraph to eventually be filled.  

There has been a great deal of work on determination of the zero forcing number \cite{barioli2010zero, berliner2017zero, berliner2015path,  burgarth2013zero, aim2008zero}.  A related question has also been studied for finite simple graphs: what is the largest number of vertices that initially could be filled, yet never lead to the entire graph being filled?  In the context of the rumor example, how many people could initially know the secret, yet the secret never spread to all the people in the network?  This is called the failed zero forcing number of a graph, and has been studied for finite simple graphs \cite{ansill2016failed, fetcie2014failed}.
The problem of computing the failed zero forcing number has been shown to be NP-hard \cite{shitov2017complexity}.  Zero forcing was studied for digraphs in \cite{berliner2017zero, berliner2015path}.
In this paper, we expand the study of failed zero forcing to digraphs, including oriented graphs.

\subsection{Definitions and notation}

We denote by $D=(V,A)$ a finite simple digraph with vertex set $V$ where $n=|V|$ and arc set $A$, or $V(D)$ and $A(D)$ respectively in the case the digraph in question is ambiguous. We primarily use digraph notation based on \cite{bang2008digraphs}. The word \emph{simple} indicates that the digraph has no loops (that is, no arcs of the form $(u,u)$) or more than one copy of any arc (no multiple or parallel arcs), where an arc $(u,v)$ is an ordered pair of vertices with \emph{tail} $u$ and \emph{head} $v$.  Note that $(u,v), (v,u) \in A$ is permitted, since the head and tail of each is swapped.   The \emph{complement} of $D$, which we denote by $\overline{D}$, is a digraph such that $V(\overline{D}) = V(D)$, and for any $u, v \in V(\overline{D})$, $(u,v) \in A(\overline{D})$ if and only if $(u,v) \notin A(D)$.    Some digraphs with loops are investigated in Section \ref{sec:line}.  An \emph{oriented graph} $D$ is a digraph with no cycle of length 2.  That is, if $D$ is an oriented graph with $(u,v) \in A(D)$, then $(v,u) \notin A(D)$.  We use $uv$ in place of $(u,v)$ throughout the paper.  For any $S \subseteq V$, we refer to $|S|$ as the \emph{order} of  $S$ and to $|V|$ as the \emph{order} of the digraph.

For a vertex $u\in V(D)$, the \emph{open in-neighborhood} of $u$ in $D$, denoted $N_D^-(u)$, is $N_D^-(u)=\{v \in V : vu  \in A\}$.  The \emph{closed in-neighborhood} of $u$, denoted $N_D^-[u]$, is the set $N_D^-(u) \cup \{u\}$.    The \emph{open out-neighborhood} of $u$  is the set $N_D^+(u)=\{v \in V(D) : uv \in A\}$.  The \emph{closed out-neighborhood} of $u$ is the set $N_D^+[u]=N_D^+(u) \cup \{u\}$.    The \emph{in-degree} and \emph{out-degree} of $u \in V(D)$ are given by $\deg_D^-(u)=|N_D^-(u)|$ and $\deg_D^+(u)=|N_D^+(u)|$ respectively.  For $S \subseteq V(D)$, we use $N_D^+(S)$, $N_D^+[S]$, $N_D^-(S)$, and $N_D^-[S]$ to denote the respective neighborhoods.  If the digraph $D$ is understood, at times we omit mention of $D$, using $\deg^-(u)$ instead of $\deg_D^-(u)$, for example. 
A \emph{source} is a vertex $v \in V(D)$ such that $\deg_D^-(v)=0$.  A \emph{sink} is a vertex $v \in V(D)$ such that $\deg_D^+(v)=0$.

We describe zero forcing formally as follows on a simple digraph $D$.   Let $S \subseteq V$, and let $i \in \{0, 1, 2, 3, \ldots\}$.  Then
\begin{itemize}
\item $B^0(S) := S$
\item $B^{i+1}(S) := B^i(S) \cup \{w : \{w\} = N^+(v) \backslash B^i(S) \mbox{ for some } v \in B^i(S)\}$
\end{itemize}
Note that for any $i \geq 0$, $B^i(S) \subseteq B^{i+1}(S)$, and there exists some $j \geq 0$ such that $B^k(S) = B^j(S)$ for every $k \geq j$.  This formal definition is equivalent to the definition described in terms of the CCR. If $u \in B^i(S)$ for some $i$, and $\{v\}=N^+[u] \backslash B^i(S)$ (that is, if the CCR dictates that $v$ will be filled in the next iteration), we refer to this as a \emph{color change}.

\begin{dfn}
We say that $S$ is a 
\begin{itemize}
\item \emph{zero forcing set} (ZFS) if $B^t(S) = V$ for some $t \geq 0$
\item \emph{failed zero forcing set} (FZFS) otherwise.
\end{itemize}
\end{dfn}

The \emph{zero forcing number} $\Z(D)$ is the smallest order of any ZFS of $D$.  The \emph{failed zero forcing number} $\F(D)$ is the largest order of any FZFS of $D$.   If $S\subseteq V$ is a set with $|S|=\F(D)$, then we say that $S$ is a \emph{maximum} FZFS.  
  If $B^1(S)=B^0(S)$, we say that $S$ is \emph{stalled}.  Note that any maximum FZFS  is stalled.    
 The concept of a stalled zero forcing set was introduced in the context of failed skew zero forcing in \cite{ansill2016failed}.  In \cite{ferrero2019zero}, the authors introduced the idea of a critical set. 
 \begin{dfn}
A nonempty set $W \subseteq V(D)$ is called \emph{(weakly) critical} if for every $v \in V(D) \backslash W$, $|N_D^+(v) \cap W| \neq 1$, and \emph{strongly critical} if for every  $v \in V(D)$, $|N_D^+(v) \cap W| \neq 1$.  
\end{dfn}
Note that $W$ is a critical set in $D$ if and only if $V(D) \backslash W$ is stalled, and that every strongly critical set is a critical set.  Figure \ref{fig:simpleexample} shows examples of a ZFS, a FZFS that is stalled, a critical set, and a FZFS that is not stalled.  Note that for the digraph $D$ shown, neither FZFS is a maximum FZFS, since we can see that $\F(D)=n-1$ by letting $S= V \backslash \{ v \}$, for example.  
  Throughout the paper, we use the relationship between stalled sets and critical sets to establish results about FZFS as well as about critical sets. 
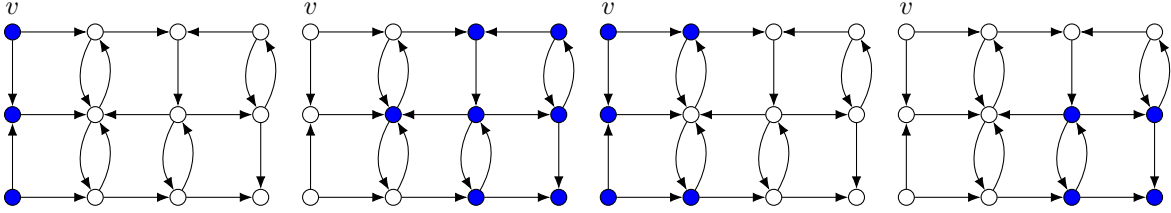
\begin{figure}[htbp]
\begin{center}
\begin{minipage}{0.24\textwidth}
\begin{tikzpicture}[auto]
\tikzstyle{vertex}=[draw, circle, inner sep=1.5mm, scale=0.5]
\tikzset{edge/.style = {->,> = Latex}}
\tikzset{curvededge/.style = {->,> = Latex, bend right}}
\node (v1) at (0,0) [vertex, fill=blue, label=$v$] {};
\node (v2) at (0,-1.1) [vertex, fill=blue] {};
\node (v3) at  (0,-2.2) [vertex, fill=blue] {};
\node (v4) at (1.1,0) [vertex] {};
\node (v5) at (1.1,-1.1) [vertex] {};
\node (v6) at  (1.1,-2.2) [vertex] {};
\node (v7) at (2.2,0) [vertex] {};
\node (v8) at (2.2,-1.1) [vertex] {};
\node (v9) at  (2.2,-2.2) [vertex] {};
\node (v10) at (3.3,0) [vertex] {};
\node (v11) at (3.3,-1.1) [vertex] {};
\node (v12) at  (3.3,-2.2) [vertex] {};
\foreach[evaluate=\y using int(\x+1)] \x in {1, 7, 11}
\draw [edge] (v\x) to (v\y);
\foreach[evaluate=\y using int(\x-1)] \x in {3}
\draw [edge] (v\x) to (v\y);
\foreach[evaluate=\y using int(\x+3)] \x in {1, 3, 4, 9}
\draw [edge] (v\x) to (v\y);
\foreach[evaluate=\y using int(\x-3)] \x in {8, 10}
\draw [edge] (v\x) to (v\y);
\foreach[evaluate=\y using int(\x-1)] \x in {5, 6, 9, 11}{
\draw [curvededge] (v\x) to (v\y);
\draw [curvededge] (v\y) to (v\x);
}
\foreach[evaluate=\y using int(\x+3)] \x in {2, 6, 8}
\draw [edge] (v\x) to (v\y);
\end{tikzpicture}
\end{minipage}
\begin{minipage}{0.24\textwidth}
\begin{tikzpicture}[auto]
\tikzstyle{vertex}=[draw, circle, inner sep=1.5mm, scale=0.5]
\tikzstyle{vertexemp}=[draw, circle, inner sep=0.8mm]
\tikzset{edge/.style = {->,> = Latex}}
\tikzset{curvededge/.style = {->,> = Latex, bend right}}
\node (v1) at (0,0) [vertex, label=$v$] {};
\node (v2) at (0,-1.1) [vertex] {};
\node (v3) at  (0,-2.2) [vertex] {};
\node (v4) at (1.1,0) [vertex] {};
\node (v5) at (1.1,-1.1) [vertex, fill=blue] {};
\node (v6) at  (1.1,-2.2) [vertex] {};
\node (v7) at (2.2,0) [vertex, fill=blue] {};
\node (v8) at (2.2,-1.1) [vertex,fill=blue] {};
\node (v9) at  (2.2,-2.2) [vertex, fill=blue] {};
\node (v10) at (3.3,0) [vertex, fill=blue] {};
\node (v11) at (3.3,-1.1) [vertex, fill=blue] {};
\node (v12) at  (3.3,-2.2) [vertex, fill=blue] {};
\foreach[evaluate=\y using int(\x+1)] \x in {1, 7, 11}
\draw [edge] (v\x) to (v\y);
\foreach[evaluate=\y using int(\x-1)] \x in {3}
\draw [edge] (v\x) to (v\y);
\foreach[evaluate=\y using int(\x+3)] \x in {1, 3, 4, 9}
\draw [edge] (v\x) to (v\y);
\foreach[evaluate=\y using int(\x-3)] \x in {8, 10}
\draw [edge] (v\x) to (v\y);
\foreach[evaluate=\y using int(\x-1)] \x in {5, 6, 9, 11}{
\draw [curvededge] (v\x) to (v\y);
\draw [curvededge] (v\y) to (v\x);
}
\foreach[evaluate=\y using int(\x+3)] \x in {2, 6, 8}
\draw [edge] (v\x) to (v\y);
\end{tikzpicture}
\end{minipage}
\begin{minipage}{0.24\textwidth}
\begin{tikzpicture}[auto]
\tikzstyle{vertex}=[draw, circle, inner sep=1.5mm, scale=0.5]
\tikzset{edge/.style = {->,> = Latex}}
\tikzset{curvededge/.style = {->,> = Latex, bend right}}
\node (v1) at (0,0) [vertex, fill=blue, label=$v$] {};
\node (v2) at (0,-1.1) [vertex, fill=blue] {};
\node (v3) at  (0,-2.2) [vertex, fill=blue] {};
\node (v4) at (1.1,0) [vertex, fill=blue] {};
\node (v5) at (1.1,-1.1) [vertex] {};
\node (v6) at  (1.1,-2.2) [vertex, fill=blue] {};
\node (v7) at (2.2,0) [vertex] {};
\node (v8) at (2.2,-1.1) [vertex] {};
\node (v9) at  (2.2,-2.2) [vertex] {};
\node (v10) at (3.3,0) [vertex] {};
\node (v11) at (3.3,-1.1) [vertex] {};
\node (v12) at  (3.3,-2.2) [vertex] {};
\foreach[evaluate=\y using int(\x+1)] \x in {1, 7, 11}
\draw [edge] (v\x) to (v\y);
\foreach[evaluate=\y using int(\x-1)] \x in {3}
\draw [edge] (v\x) to (v\y);
\foreach[evaluate=\y using int(\x+3)] \x in {1, 3, 4, 9}
\draw [edge] (v\x) to (v\y);
\foreach[evaluate=\y using int(\x-3)] \x in {8, 10}
\draw [edge] (v\x) to (v\y);
\foreach[evaluate=\y using int(\x-1)] \x in {5, 6, 9, 11}{
\draw [curvededge] (v\x) to (v\y);
\draw [curvededge] (v\y) to (v\x);
}
\foreach[evaluate=\y using int(\x+3)] \x in {2, 6, 8}
\draw [edge] (v\x) to (v\y);
\end{tikzpicture}
\end{minipage}
\begin{minipage}{0.24\textwidth}
\begin{tikzpicture}[auto]
\tikzstyle{vertex}=[draw, circle, inner sep=1.5mm, scale=0.5]
\tikzset{edge/.style = {->,> = Latex}}
\tikzset{curvededge/.style = {->,> = Latex, bend right}}
\node (v1) at (0,0) [vertex, label=$v$] {};
\node (v2) at (0,-1.1) [vertex] {};
\node (v3) at  (0,-2.2) [vertex] {};
\node (v4) at (1.1,0) [vertex] {};
\node (v5) at (1.1,-1.1) [vertex] {};
\node (v6) at  (1.1,-2.2) [vertex] {};
\node (v7) at (2.2,0) [vertex] {};
\node (v8) at (2.2,-1.1) [vertex,fill=blue] {};
\node (v9) at  (2.2,-2.2) [vertex, fill=blue] {};
\node (v10) at (3.3,0) [vertex] {};
\node (v11) at (3.3,-1.1) [vertex, fill=blue] {};
\node (v12) at  (3.3,-2.2) [vertex, fill=blue] {};
\foreach[evaluate=\y using int(\x+1)] \x in {1, 7, 11}
\draw [edge] (v\x) to (v\y);
\foreach[evaluate=\y using int(\x-1)] \x in {3}
\draw [edge] (v\x) to (v\y);
\foreach[evaluate=\y using int(\x+3)] \x in {1, 3, 4, 9}
\draw [edge] (v\x) to (v\y);
\foreach[evaluate=\y using int(\x-3)] \x in {8, 10}
\draw [edge] (v\x) to (v\y);
\foreach[evaluate=\y using int(\x-1)] \x in {5, 6, 9, 11}{
\draw [curvededge] (v\x) to (v\y);
\draw [curvededge] (v\y) to (v\x);
}
\foreach[evaluate=\y using int(\x+3)] \x in {2, 6, 8}
\draw [edge] (v\x) to (v\y);
\end{tikzpicture}
\end{minipage}
\caption{From left to right: a ZFS, a stalled FZFS, a critical set, and a FZFS that is not stalled.}
\label{fig:simpleexample}
\end{center}
\end{figure}

For any digraph $D$, we say that $G$ is the \emph{underlying graph} of $D$, denoted $G = UG(D)$, if $G$ is the unique simple, finite undirected graph obtained by replacing every arc $uv \in A(D)$ with an undirected edge $\{u, v\}$.  The digraph $D$ is \emph{weakly connected} if $UG(D)$ is connected.  We present several results related to paths and cycles in this paper.  

\begin{dfn}
A \emph{weak path} (resp. \emph{weak cycle}) is a digraph whose underlying graph is a path (resp. cycle). Given a digraph $D$ with an alternating sequence $P = v_1 a_1 v_2 a_2  v_3 a_3 \ldots v_{k-1} a_{k-1} v_k$  of  vertices $v_i \in V(D)$ and  distinct arcs $a_j \in A(D)$ such that the tail of $a_i$ is $v_i$ and the head of $a_i$ is $v_{i+1}$, if all vertices are distinct, then $P$ is a \emph{(directed) path}.  If vertices $v_1$ through $v_{k-1}$ are distinct, and $v_k = v_1$, then $P$ is a \emph{(directed) cycle}.   \end{dfn}

For both weak paths and weak cycles, we include the possibilities that $D=K_1$ (a single vertex) and that $UG(D) = K_2$. An example of a directed cycle is shown in Figure \ref{fig:orientedcycle}.

This remainder of this paper is organized as follows. We now describe motivation for the study of failed zero forcing.  In Section \ref{sec:extremevalues}, we characterize digraphs with high and low values of $\F(D)$.  In Section \ref{sec:comparing}, we provide a characterization of digraphs that have the unusual property that $\F(D) < \Z(D)$, which also results in a characterization of digraphs that have the property that $W \subseteq V$ is a critical set if and only if $|W| \geq k$ for some $k \geq 1$.   In Section \ref{sec:selectedgraphs} we determine $\F(D)$ for specific families of digraphs, including weak paths and cycles,  disconnected digraphs in terms of their components, some trees including oriented trees, and lastly, for weakly connected line digraphs such as de Bruijn and Kautz digraphs.  Finally, in Section \ref{sec:openproblems} we discuss possible directions for future research resulting from this paper.  Throughout the paper we focus on digraphs that do not have loops.  However, in Section \ref{sec:line} we consider digraphs that have loops to allow us to consider applicable line digraphs such as de Bruijn digraphs.  

\subsection{Zero forcing and minimum rank}

Zero forcing problems have been studied for their applications to minimum rank problems \cite{barioli2010zero, aim2008zero} as well as to identification and control in quantum networks \cite{burgarth2009local, burgarth2013zero,  burgarth2007full,  burgarth2009indirect, severini2008nondiscriminatory}.   The failed zero forcing number naturally relates to these applications as well.  We consider the connection of failed zero forcing to matrices here. 

Given a square matrix $M$ with $n$ rows, we use $ker(M)$ to denote the \emph{kernel} of $M$.  That is, for a vector $\mathbf{v}$ of length $n$, $\mathbf{v} \in ker(M)$ if and only if $M\mathbf{v}=0$.  The \emph{support} of a vector $\mathbf{x}=[x_i]$, denoted $\mbox{supp}(\mathbf{x})$, is given by $\{i : x_i \neq 0\}$.  Given a digraph $D$ with $|V|=n$, let $\mathcal{S}(D)$ denote the set of $n\times n$ matrices such that the entry in Row $i$, Column $j$ is nonzero if and only if $ij \in A$ for $i \neq j$, with diagonal entries unrestricted.

We note a proposition, similar to \cite[Proposition 2.3]{aim2008zero} but extended to digraphs.  The proof is similar to that of  \cite[Proposition 2.3]{aim2008zero} but included here for completeness.

\begin{prop}
Let $Z$ be a ZFS of digraph $D$  with $M \in \mathcal{S}(D)$.   If $\mathbf{x} \in ker(M)$ and $\mbox{supp}(\mathbf{x}) \cap Z = \emptyset$, then $\mathbf{x} = 0$.  
\end{prop}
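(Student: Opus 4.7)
The plan is to mimic the standard matrix–zero-forcing argument of \cite{aim2008zero}, but track it through the out-neighborhood version of the color change rule so that it applies to digraphs. Specifically, I would prove by induction on $i$ that every coordinate of $\mathbf{x}$ indexed by a vertex in $B^i(Z)$ is zero. The base case $i=0$ is immediate, since $B^0(Z) = Z$ and by hypothesis $\mathrm{supp}(\mathbf{x}) \cap Z = \emptyset$.

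For the inductive step, assume $x_j = 0$ for every $j \in B^i(Z)$ and let $w \in B^{i+1}(Z) \setminus B^i(Z)$. By the definition of $B^{i+1}(Z)$, there is some $v \in B^i(Z)$ for which $\{w\} = N^+(v) \setminus B^i(Z)$. I would then read off the $v$-th row of the equation $M\mathbf{x} = 0$. Because $M \in \mathcal{S}(D)$, the off-diagonal nonzero entries of that row are exactly $\{M_{vj} : j \in N^+(v)\}$, so the row equation takes the form
\[
M_{vv}\,x_v \;+\; \sum_{j \in N^+(v)} M_{vj}\,x_j \;=\; 0.
\]
Now $x_v = 0$ since $v \in B^i(Z)$, and every $j \in N^+(v) \setminus \{w\}$ also lies in $B^i(Z)$, so each such $x_j = 0$. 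The equation collapses to $M_{vw}\,x_w = 0$, and since $vw \in A(D)$ with $v \neq w$ (no loops), the matrix entry $M_{vw}$ is nonzero, forcing $x_w = 0$. Because $Z$ is a ZFS, iterating gives $B^t(Z) = V$ for some $t$, so $\mathbf{x}$ vanishes on all of $V$ and hence $\mathbf{x} = \mathbf{0}$.

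The only point requiring genuine care is bookkeeping about orientation: one must use the \emph{out}-neighborhood $N^+(v)$ both in the color change rule and in the support of the $v$-th row of $M$, which is why the convention $M_{ij} \neq 0 \iff ij \in A$ in the definition of $\mathcal{S}(D)$ is exactly what makes the argument go through. The key observation is that the color change rule encodes precisely the scenario in which a single linear equation can be solved for a single unknown, so the transition from one color change to one coordinate of $\mathbf{x}$ being forced to zero is immediate and no serious obstacle arises.
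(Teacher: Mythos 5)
Your proof is correct and is essentially the same argument as the paper's: the paper also isolates the row of $M$ indexed by the forcing vertex, observes that all out-neighbor entries of $\mathbf{x}$ except the one being forced are already zero, concludes $M_{uv}x_v=0$ hence $x_v=0$, and then says ``this is true for each color change.'' You have merely made the implicit induction on $B^i(Z)$ explicit, which is a cosmetic rather than substantive difference.
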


\begin{proof}
If $Z=V$, then $\mbox{supp}(\mathbf{x})$ is empty, giving us $\mathbf{x}=0$, so suppose $Z \subsetneq V$.  Then $B^0(Z) \subsetneq B^1(Z)$, and there exist $u, v \in V$ with $\{v\} = N_D^+[u] \backslash B^0(Z)$. By assumption, $x_u = 0$ and $(Mx)_u = 0$.  The only nonzero entries in Row $u$ of $M$ are those corresponding to $N_D^+(u)$.  Since $\left( N_D^+(u) \backslash \{v\} \right) \subseteq Z$,    the corresponding entries of $\mathbf{x}$ are $0$, other than $x_v$.   We have the equation $M_{uv} x_v = 0$, giving $x_v=0$.  This is true for each color change.  Thus, $\mathbf{x}=0$.   
\end{proof}

Since $\F(D)$ is the maximum order of any FZFS, any set of order $\F(D) + 1$ or bigger is a ZFS.  Combining this fact with the above proposition gives us the following.

\begin{prop}
Let $S \subseteq V(D)$ with $|S| \geq \F(D) + 1$.  Let $M \in \mathcal{S}(D)$.  If $\mathbf{x} \in ker(M)$ and all entries of $\mathbf{x}$ corresponding to $S$ are $0$, then $\mathbf{x}=0$.  
\end{prop}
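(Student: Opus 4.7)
The plan is to reduce this statement immediately to the previous proposition by observing that the size condition forces $S$ to be a zero forcing set.

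First I would argue that $S$ is a ZFS. By definition, $\F(D)$ is the maximum cardinality of any FZFS, so any subset of $V(D)$ of cardinality strictly greater than $\F(D)$ cannot be a FZFS. Since every subset of $V(D)$ is either a ZFS or a FZFS, the hypothesis $|S| \geq \F(D) + 1$ forces $S$ to be a ZFS of $D$.

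Next, I would translate the hypothesis on $\mathbf{x}$ into the form required by the preceding proposition. The statement ``all entries of $\mathbf{x}$ corresponding to $S$ are $0$'' says precisely that $\mbox{supp}(\mathbf{x}) \cap S = \emptyset$. Thus $\mathbf{x} \in ker(M)$ with $\mbox{supp}(\mathbf{x}) \cap S = \emptyset$, and $S$ is a ZFS, so the preceding proposition applies directly and yields $\mathbf{x} = 0$.

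There is no real obstacle here; the result is a clean corollary of the previous proposition, with the only substantive content being the observation that a set larger than $\F(D)$ must be a ZFS. The proof should consist of essentially two sentences.
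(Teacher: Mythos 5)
Your proposal is correct and matches the paper exactly: the paper also observes that any set of order at least $\F(D)+1$ must be a ZFS (since it cannot be a FZFS) and then invokes the preceding proposition. Nothing is missing.
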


Thus,  if $M \in \mathcal{S}(D)$ for a digraph $D$,  and $\mathbf{x} \in ker(M)$ with $\mathbf{x} \neq 0$, then $\mathbf{x}$ has at least $n-F(D)$ nonzero entries.

\section{Extreme values}
\label{sec:extremevalues}
In this section, we establish the relationship between failed zero forcing and critical sets to characterize digraphs with high and low values of $\F(D)$.  

\begin{obs}
For any critical set $W$ in a digraph $D$, $V(D) \backslash W$ is a failed zero forcing set.  \label{obs:critical}
\end{obs}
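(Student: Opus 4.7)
The plan is to exploit the relationship noted right after the definition of critical sets, namely that $W$ is critical in $D$ if and only if $V(D)\setminus W$ is stalled. Once this is established, the observation follows immediately, since a stalled set that is not all of $V(D)$ cannot be a ZFS, and $W$ nonempty guarantees that $V(D)\setminus W\neq V(D)$.

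Concretely, I would let $S := V(D)\setminus W$ and unpack what ``stalled'' means in this setting. Writing $B^0(S)=S$, a color change from $S$ to $B^1(S)$ requires some $v\in S$ with exactly one out-neighbor outside $S$, that is, with $|N_D^+(v)\cap(V(D)\setminus S)|=1$. But $V(D)\setminus S = W$, and the criticality of $W$ says precisely that $|N_D^+(v)\cap W|\neq 1$ for every $v\in V(D)\setminus W = S$. Hence no vertex of $S$ can force, so $B^1(S)=S$.

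From $B^1(S)=S$ an easy induction gives $B^t(S)=S$ for every $t\geq 0$: if $B^t(S)=S$, then the set of candidates to force and the set of targets are exactly the same as at step $0$, so $B^{t+1}(S)=B^1(S)=S$. Therefore the filled set never grows past $S$, and because $W$ is nonempty by definition, $S\subsetneq V(D)$, so $B^t(S)\neq V(D)$ for all $t$. This is exactly the failure condition, so $S$ is a FZFS.

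The argument is entirely bookkeeping with the definitions, so there is no real obstacle; the only point that needs care is to recall that ``critical'' is defined only for nonempty $W$, which is what prevents the degenerate case $S=V(D)$ from spuriously being counted as a FZFS. Apart from that, the proof is a direct translation between the criticality condition and the color change rule applied one step at a time.
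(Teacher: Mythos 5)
Your proof is correct and matches the paper's intended reasoning exactly: the paper states this as an observation following immediately from the remark that $W$ is critical if and only if $V(D)\setminus W$ is stalled, and your unpacking of the definitions (stalled sets never grow, and $W\neq\emptyset$ forces $V(D)\setminus W\subsetneq V(D)$) is precisely that argument made explicit.
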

\begin{prop}
Let $1 \leq k \leq n$.  Then $\F(D) = n-k$ if and only if the smallest cardinality of any critical set in $D$ is $k$.     \label{prop:charnminusk}
\end{prop}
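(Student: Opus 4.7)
The plan is to leverage the bijection between stalled sets and critical sets that the paper already emphasizes: $W \subseteq V(D)$ is critical if and only if $V(D)\setminus W$ is stalled, together with the fact that every maximum FZFS is stalled.

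First I would prove the forward direction. Assume $\F(D) = n-k$ and pick a maximum FZFS $S$ with $|S| = n-k$. Since any maximum FZFS is stalled, $S$ is stalled, so $W := V(D)\setminus S$ is critical, and $|W| = k$. It remains to show that no critical set is smaller. Suppose, for contradiction, that there were a critical set $W'$ with $1 \le |W'| < k$. By the stalled/critical correspondence, $V(D)\setminus W'$ is stalled, and because $W'$ is nonempty it is a proper subset of $V(D)$, hence a FZFS. But $|V(D)\setminus W'| = n - |W'| > n - k = \F(D)$, contradicting the maximality of $\F(D)$.

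Next I would prove the reverse direction. Assume the smallest critical set in $D$ has cardinality $k$, and let $W$ be such a critical set. Then $V(D)\setminus W$ is stalled and, being a proper subset of $V(D)$, is a FZFS of order $n-k$, so $\F(D) \ge n-k$. Conversely, if $\F(D) \ge n-k+1$, pick a maximum FZFS $S^*$. Since $S^*$ is a FZFS, $S^* \subsetneq V(D)$, so $W^* := V(D)\setminus S^*$ is nonempty. Since $S^*$ is stalled (being maximum), $W^*$ is a critical set, and $|W^*| = n - |S^*| \le k-1 < k$, contradicting the minimality of $k$. Hence $\F(D) = n-k$ exactly.

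The argument is essentially a routine bookkeeping translation through the correspondence between stalled supersets of $V(D)$ and critical sets, so I do not expect a genuine obstacle. The one place to exercise care is the nonemptiness stipulation in the definition of critical set: we must verify on each side that the stalled set we produce is a proper subset of $V(D)$ (so that its complement is nonempty and hence genuinely critical) and, symmetrically, that the complement of a critical set is stalled and is not equal to $V(D)$ (so it really is a FZFS rather than just a trivially filled set). Both conditions are immediate, but they justify why we can pass freely between ``maximum FZFS of order $n-k$'' and ``minimum critical set of order $k$.''
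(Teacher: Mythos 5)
Your proof is correct and follows essentially the same route as the paper: both directions go through the correspondence between critical sets and stalled complements, using that a maximum FZFS is stalled and that the complement of a nonempty critical set is a stalled proper subset of $V(D)$ and hence a FZFS. The paper's version is just terser, verifying directly that each $v$ in a largest FZFS $S$ has zero or at least two out-neighbors outside $S$ rather than invoking the correspondence by name.
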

\begin{proof}
If $D$ contains a critical set $W$ of cardinality $k$, then $V \backslash W$ is a FZFS.  Thus, $\F(D) \geq n-k$ where $k$ is the smallest cardinality of any critical set.  For the reverse direction, suppose $S$ is a largest FZFS in $D$.  Then for any $v \in S$, $v$ has either no out-neighbors or two out-neighbors in $V \backslash S$, since otherwise, either $S$ is a ZFS or there is a larger FZFS than $S$.  That is, $V \backslash S$ is a critical set.  Hence $\F(D) \leq n-k$.   
\end{proof}
For high values, we can describe these digraphs as follows.   
\begin{corollary}
$\F(D)=n-1$ if and only if $D$ has a source, and $\F(D) = n-2$ if and only if there exist $u,v \in V$ with $N_D^-(u) \backslash \{v\} = N_D^-(v) \backslash \{u\}$ and $\deg^-(w) > 0$ for all $w \in V$.  
$F(D) = n -3$ if and only if all of the following conditions are satisfied.
\begin{enumerate}
\item $\deg^-(v) > 0$ for all $v \in V$
\item For any  distinct vertices $u, v \in V$, $N_D^-(u) \backslash \{v\} \neq N_D^-(v) \backslash \{u\}$
\item There exist vertices $u, v, w \in V$ such that 
\begin{itemize}
\item $N_D^-(u) \backslash \{v, w\} \subseteq N_D^-(v) \cup N_D^-(w)$, 
\item $N_D^-(v) \backslash \{u, w\} \subseteq N_D^-(u) \cup N_D^-(w)$, and 
\item $N_D^-(w) \backslash \{u,v\} \subseteq N_D^-(u) \cup N_D^-(v)$.
\end{itemize}
\end{enumerate}
 \label{cor:source}
\end{corollary}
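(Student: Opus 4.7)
The plan is to use Proposition \ref{prop:charnminusk} throughout: the equation $\F(D) = n-k$ is equivalent to saying the smallest critical set in $D$ has cardinality exactly $k$. So for each $k \in \{1, 2, 3\}$ I need to characterize when the smallest critical set has size $k$, which amounts to (i) exhibiting a critical set of size $k$ with the stated structure, and (ii) ruling out critical sets of smaller size. Part (ii) for a given $k$ is just the negation of the existence statement from the case $k-1$, so the real work is unpacking the critical-set definition for small $k$.

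For $k = 1$, a singleton $\{u\}$ is critical if and only if no $v \neq u$ satisfies $|N_D^+(v) \cap \{u\}| = 1$; since the intersection is a subset of a singleton, this reduces to $u$ having no in-neighbors, i.e., $u$ is a source. For $k = 2$, I need no source (to rule out a size-$1$ critical set) and some pair $\{u, v\}$ critical, which unpacks to the requirement that every $w \neq u, v$ have either both or neither of $u, v$ as out-neighbors, i.e., $w \in N_D^-(u)$ if and only if $w \in N_D^-(v)$. Using that $D$ is loopless, this rewrites as $N_D^-(u) \setminus \{v\} = N_D^-(v) \setminus \{u\}$, matching the stated condition.

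For $k = 3$, I carry over the non-existence of sources and of pairs with matching truncated in-neighborhoods, and need to translate ``some triple $\{u, v, w\}$ is critical'' into the three subset conditions. Criticality says every $x \notin \{u, v, w\}$ has $|N_D^+(x) \cap \{u, v, w\}| \in \{0, 2, 3\}$. From the in-neighborhood perspective, this is equivalent to the condition that whenever $x \notin \{u, v, w\}$ is an in-neighbor of one element of the triple, it is also an in-neighbor of at least one other element; stating this separately with each of $u$, $v$, $w$ playing the distinguished role yields the three inclusions, and looplessness handles the exclusion of the triple members themselves from each in-neighborhood. The only real obstacle is verifying that the three inclusions are together sufficient as well as necessary: if they hold and some $x \notin \{u, v, w\}$ had exactly one out-neighbor in the triple (say only $u$), then the first inclusion would place $x$ in $N_D^-(v) \cup N_D^-(w)$, a contradiction that rules out $|N_D^+(x) \cap \{u, v, w\}| = 1$ and so confirms criticality.
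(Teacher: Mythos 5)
Your proposal is correct and matches the route the paper intends: the corollary is stated without proof precisely because it follows from Proposition \ref{prop:charnminusk} by unpacking what it means for a set of size $1$, $2$, or $3$ to be critical, which is exactly the translation you carry out (including the correct use of looplessness to rewrite the conditions in terms of truncated in-neighborhoods). The only cosmetic caveat is that the statement's ``there exist $u,v$'' and ``there exist $u,v,w$'' must be read as distinct vertices, as you implicitly do.
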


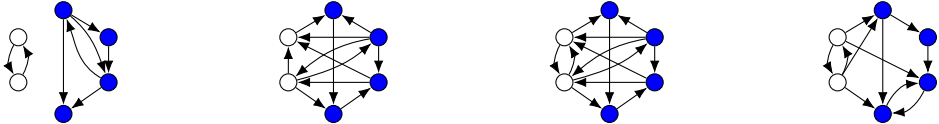
\begin{figure}[h!]
\begin{minipage}{0.22\textwidth}
\begin{center}
\begin{tikzpicture}[auto, scale=0.8]
\def\Radius{1cm}
\tikzstyle{vertex}=[draw, circle, inner sep=0.8mm]
\tikzset{edge/.style = {->,> = Latex}}
\node (u) at (0,0) [vertex]{};
\node (v) at (0,-0.75) [vertex]{};
\node (v1) at (0.75,0.45) [vertex, fill=blue]{};
\node (v2) at (1.5, 0) [vertex, fill=blue]{};
\node (v3) at (1.5,-0.75) [vertex, fill=blue]{};
\node (v4) at (0.75,-1.275) [vertex, fill=blue]{};
\draw[edge, bend right] (v) to (u);
\draw[edge, bend right] (u) to (v);
\draw[edge](v1) to (v4);
\draw[edge](v1) to (v2);
\draw[edge] (v2) to (v3);
\draw[edge] (v3) to (v4);
\draw[edge, out=150, in=290] (v3) to (v1);
\draw[edge, out=320, in=110] (v1) to (v3);
\end{tikzpicture}
\end{center}
\end{minipage}
\begin{minipage}{0.22\textwidth}
\begin{center}
\begin{tikzpicture}[auto, scale=0.8]
\def\Radius{1cm}
\tikzstyle{vertex}=[draw, circle, inner sep=0.8mm]
\tikzset{edge/.style = {->,> = Latex}}
\node (u) at (0,0) [vertex]{};
\node (v) at (0,-0.75) [vertex]{};
\node (v1) at (0.75,0.45) [vertex, fill=blue]{};
\node (v2) at (1.5, 0) [vertex, fill=blue]{};
\node (v3) at (1.5,-0.75) [vertex, fill=blue]{};
\node (v4) at (0.75,-1.2755) [vertex, fill=blue]{};
\draw[edge] (v) to (u);
\draw[edge](u) to (v1);
\draw[edge](v) to (v4);
\draw[edge](v2) to (u);
\draw[edge, out=190, in=40](v2) to (v);
\draw[edge, out=15, in=220](v) to (v2);
\draw[edge](v3) to (u);
\draw[edge](v3) to (v);
\draw[edge](v4) to (v3);
\draw[edge](v1) to (v4);
\draw[edge](v2) to (v1);
\draw[edge](v2) to (v3);
\end{tikzpicture}
\end{center}
\end{minipage}
\begin{minipage}{0.22\textwidth}
\begin{center}
\begin{tikzpicture}[auto, scale=0.8]
\def\Radius{1cm}
\tikzstyle{vertex}=[draw, circle, inner sep=0.8mm]
\tikzset{edge/.style = {->,> = Latex}}
\node (u) at (0,0) [vertex]{};
\node (v) at (0,-0.75) [vertex]{};
\node (v1) at (0.75,0.45) [vertex, fill=blue]{};
\node (v2) at (1.5, 0) [vertex, fill=blue]{};
\node (v3) at (1.5,-0.75) [vertex, fill=blue]{};
\node (v4) at (0.75,-1.2755) [vertex, fill=blue]{};

\draw[edge, bend right] (v) to (u);
\draw[edge, bend right] (u) to (v);
\draw[edge](u) to (v1);
\draw[edge](v) to (v4);

\draw[edge](v2) to (u);
\draw[edge, out=190, in=40](v2) to (v);
\draw[edge, out=15, in=220](v) to (v2);
\draw[edge](v3) to (u);
\draw[edge](v3) to (v);
\draw[edge](v4) to (v3);
\draw[edge](v1) to (v4);
\draw[edge](v2) to (v1);
\draw[edge](v2) to (v3);
\end{tikzpicture}
\end{center}
\end{minipage}
\begin{minipage}{0.22\textwidth}
\begin{center}
\begin{tikzpicture}[auto, scale=0.8]
\def\Radius{1cm}
\tikzstyle{vertex}=[draw, circle, inner sep=0.8mm]
\tikzset{edge/.style = {->,> = Latex}}
\node (u) at (0,0) [vertex]{};
\node (v) at (0,-0.75) [vertex]{};
\node (v1) at (0.75,0.45) [vertex, fill=blue]{};
\node (v2) at (1.5, 0) [vertex, fill=blue]{};
\node (v3) at (1.5,-0.75) [vertex, fill=blue]{};
\node (v4) at (0.75,-1.275) [vertex, fill=blue]{};
\draw[edge, bend right] (v) to (u);
\draw[edge, bend right] (u) to (v);
\draw[edge](u) to (v1);
\draw[edge](u) to (v3);
\draw[edge](v) to (v4);
\draw[edge](v) to (v1);
\draw[edge](v1) to (v4);
\draw[edge](v1) to (v2);
\draw[edge] (v2) to (v3);
\draw[edge, bend left] (v3) to (v4);
\draw[edge, bend left] (v4) to (v3);
\end{tikzpicture}
\end{center}
\end{minipage}
\caption{Examples of digraphs with $\F(D)=n-2$.}
\label{fig:nminus2}
\end{figure}

We now characterize digraphs that have  $\F(D)=0$.   Note that this is of particular interest because a digraph $D$ with $\F(D) = 0$ has the property that $\{v\}$ is a ZFS for any $v \in V(D)$.

\begin{theorem} 
$\F(D)=0$ if and only if $D$ is a directed cycle.\label{thm:f0}
\end{theorem}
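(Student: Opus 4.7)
The plan is to argue both directions directly from the color-change rule.

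For $(\Leftarrow)$, if $D$ is the directed cycle $v_1 \to v_2 \to \cdots \to v_n \to v_1$ and $S \subseteq V(D)$ is any nonempty set, then choosing some $v_i \in S$ and walking along the cycle, each filled vertex has a unique out-neighbor, so successive applications of the CCR fill $v_{i+1}, v_{i+2}, \ldots$ in turn until every vertex of $D$ is filled. Thus every nonempty set is a ZFS, the only FZFS is $\emptyset$, and $\F(D) = 0$.

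For $(\Rightarrow)$, I would assume $\F(D) = 0$ with $n \geq 2$. The first step is to show that every vertex has out-degree exactly $1$: if some $v$ satisfied $\deg^+(v) = 0$ or $\deg^+(v) \geq 2$, then no color change could fire from the initial set $\{v\}$, making $\{v\}$ a stalled FZFS of cardinality $1$ and contradicting $\F(D) = 0$. Next, I would invoke the identity $\sum_v \deg^-(v) = \sum_v \deg^+(v) = n$ together with Corollary \ref{cor:source}: since $\F(D) = 0 \neq n - 1$, the digraph $D$ has no source, so $\deg^-(v) \geq 1$ for every $v$, and combined with the degree sum this forces $\deg^-(v) = 1$ for every vertex as well.

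A digraph in which every vertex has both in-degree and out-degree equal to $1$ is necessarily a disjoint union of directed cycles, so to conclude I would rule out more than one component. If the decomposition contained a component $C$ with $V(C) \subsetneq V(D)$, then taking $S = V(C)$, every vertex of $S$ has its unique out-neighbor already in $S$, so no color change ever fires and no vertex outside $V(C)$ is ever filled; hence $S$ is a stalled FZFS of size $|V(C)| \geq 2$, again contradicting $\F(D) = 0$. Therefore the decomposition consists of a single cycle and $D$ is a directed cycle.

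The main obstacle, to the extent there is one, is keeping the bookkeeping clean: distinguishing the singleton-based out-degree argument from the corollary-based source argument, and remembering that a disconnected union of cycles immediately produces a nontrivial stalled set. No clever construction is needed beyond these ingredients.
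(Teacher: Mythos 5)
Your proof is correct and follows essentially the same route as the paper's: the reverse direction (forcing $\deg^+(v)=1$ via singletons, ruling out sources with Corollary \ref{cor:source}, the degree-sum identity, and connectivity via a component forming a stalled set) matches the paper's argument almost step for step, and your forward direction is just the paper's critical-set argument restated directly in terms of the color change rule. The only omission is the degenerate case $n=1$, which the paper dispatches in one line.
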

\begin{proof}
Suppose $D$ is a directed cycle.  For any $W \subsetneq V(D)$, there exists at least one $w \in W$ with an in-neighbor $v \in V \backslash W$. Since $N^+(v) = \{w\}$, $W$ is not a critical set.  Thus, the smallest critical set is $W = V$, giving us $\F(D) = 0$ by Proposition \ref{prop:charnminusk}.  

For the other direction, suppose $\F(D)=0$.  Then $S = \{v\}$ is a ZFS for any $v \in V$.  If $|V|=1$, then we're done.  Otherwise,   $\deg^+(v)=1$ for any $v \in V$ to allow $B^0(S) \subsetneq B^1(S)$, and by Corollary \ref{cor:source}, $\deg^-(v) \geq 1$.  Since $\sum_{v \in V} \deg^+(v)  = \sum_{v \in V} \deg^-(v) =n$, $\deg^-(v)=1$ for all $v \in V$.  Noting that $D$ must be connected (else, all vertices in the largest connected component form a FZFS), we have that $D$ is a directed cycle.    \end{proof}

\begin{figure}[htbp]
\begin{center}
\begin{tikzpicture}[auto]
\def\Radius{0.75cm}
\tikzstyle{vertex}=[draw, circle, inner sep=0.8mm]
\tikzset{edge/.style = {->,> = Latex}}
\node (v0) at (0:\Radius) [vertex]{};
\foreach \y in {60, 120, ..., 300}{
\node (v\y) at (\y:\Radius) [vertex]{};}
\foreach \y [evaluate=\y as \x using {int(\y+60)}] in {0, 60, ..., 240}{
\draw[edge] (v\y) to (v\x);}
\draw[edge] (v300) to (v0);
\end{tikzpicture}
\end{center}
\caption{A digraph with $\F(D)=0$.  Every vertex is a ZFS.}
\label{fig:orientedcycle}
\end{figure}
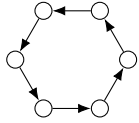 

An example of a digraph with $\F(D)=0$ is shown in Figure \ref{fig:orientedcycle}.  The following corollaries are immediate from Theorem \ref{thm:f0}.
\begin{corollary}
For every $v \in V$, $\{v\}$ is a ZFS of $D$ if and only if $D$ is a directed cycle.
\end{corollary}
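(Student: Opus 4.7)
The plan is to derive this corollary as a direct consequence of Theorem~\ref{thm:f0}, which asserts that $\F(D)=0$ if and only if $D$ is a directed cycle. So the content to verify is the equivalence ``$\F(D)=0$'' $\Leftrightarrow$ ``every singleton $\{v\}$ is a ZFS of $D$.''

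The reverse implication would be immediate: if $D$ is a directed cycle, Theorem~\ref{thm:f0} gives $\F(D)=0$. Since $\F(D)$ is the largest order of any FZFS, every nonempty subset of $V$ must be a ZFS, and in particular each singleton $\{v\}$ is a ZFS.

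For the forward implication, I would either invoke the monotonicity of zero forcing, namely that if $S \subseteq T \subseteq V$ and $S$ is a ZFS then so is $T$. One proves this by a short induction on $t$ showing $B^t(S) \subseteq B^t(T)$: any vertex $u$ eligible to perform a color change from $B^t(S)$ has at most one empty out-neighbor in $B^t(T)$ as well, since adding filled vertices can only decrease the count of empty out-neighbors, so the corresponding target is already in $B^{t+1}(T)$. Given monotonicity, every nonempty $S \subseteq V$ contains a singleton, which is a ZFS by hypothesis, so $S$ is a ZFS; hence $\emptyset$ is the only FZFS and $\F(D)=0$, and Theorem~\ref{thm:f0} concludes $D$ is a directed cycle.

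Alternatively one can bypass monotonicity by essentially re-running the second half of the proof of Theorem~\ref{thm:f0}: assuming every $\{v\}$ is a ZFS and $|V| \geq 2$, the first color change from $\{v\}$ forces $\deg^+(v)=1$; no $w \in V$ can be a source, since a source cannot be forced from any other vertex $v$ so $\{v\}$ would fail to be a ZFS; and $D$ must be weakly connected, since $\{v\}$ could not fill a vertex in a different component; these combine via $\sum \deg^-(v)=\sum \deg^+(v)=n$ to give $\deg^-(v)=\deg^+(v)=1$ for all $v$, whence $D$ is a directed cycle. The only mild obstacle is the monotonicity lemma, but it is a standard observation, so there is no substantive difficulty.
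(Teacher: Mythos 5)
Your proposal is correct and takes essentially the same route as the paper, which simply declares this corollary ``immediate from Theorem~\ref{thm:f0}'': the direction from ``directed cycle'' to ``every singleton is a ZFS'' follows from $\F(D)=0$ together with the definition of $\F$, and the converse is handled either by the standard monotonicity of zero forcing (whose inductive proof you sketch correctly) or, as you note, by observing that the second half of the paper's proof of Theorem~\ref{thm:f0} only uses the hypothesis that every singleton is a ZFS. You have correctly identified and closed the one small gap the paper glosses over.
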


\begin{corollary}
The digraph $D$ has no critical sets of cardinality less than $n$ if and only if $D$ is a directed cycle on $n$ vertices.
\end{corollary}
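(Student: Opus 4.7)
The plan is to deduce this corollary by chaining Proposition \ref{prop:charnminusk} with Theorem \ref{thm:f0}, so essentially no new combinatorial work is required. First I would observe that the full vertex set $W = V(D)$ is always a critical set: the defining condition ``for every $v \in V(D) \backslash W$, $|N_D^+(v) \cap W| \neq 1$'' is vacuously satisfied when $V(D) \backslash W = \emptyset$. Consequently, the smallest cardinality of any critical set is always at most $n$.

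Given this, I would rephrase the hypothesis. Saying that $D$ has no critical sets of cardinality less than $n$ is equivalent to saying that the smallest cardinality of any critical set in $D$ is exactly $n$. Proposition \ref{prop:charnminusk}, applied with $k = n$, then converts this statement into $\F(D) = n - n = 0$. Theorem \ref{thm:f0} immediately yields the final equivalence with $D$ being a directed cycle on $n$ vertices, and this chain of ``if and only if'' statements gives the corollary in both directions.

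The only delicate point, and really the only thing to verify carefully, is that $V(D)$ indeed qualifies as a critical set under the given definition (which requires $W$ to be nonempty and the neighborhood condition to hold for $v \in V(D)\setminus W$). Both requirements are satisfied trivially for $W = V(D)$ whenever $n \geq 1$, so there is no true obstacle; the corollary is a direct bookkeeping consequence of the two previously established results.
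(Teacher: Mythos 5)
Your argument is correct and matches the paper's intent: the corollary is stated there as an immediate consequence of Theorem \ref{thm:f0} via Proposition \ref{prop:charnminusk}, which is exactly the chain you use. Your observation that $W=V(D)$ is always a (nonempty, vacuously) critical set is the right bookkeeping step to turn ``no critical set of cardinality less than $n$'' into ``smallest critical set has cardinality $n$,'' so nothing is missing.
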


\section{Comparing $\F(D)$ with $\Z(D)$}
\label{sec:comparing}

In this section, we compare  $\F(D)$ with $\Z(D)$.  Specifically, we provide a characterization of digraphs  for which $\F(D) < \Z(D)$, which leads to a characterization of digraphs such that $W \subseteq V$ is a critical set if and only if $|W| \geq k$ for some integer $k$.    

\begin{obs}
The following are equivalent.
\begin{enumerate}
\item $\F(D) < \Z(D)$
\item  $\F(D)=\Z(D)-1$.  
\item $S \subseteq V(D)$ is a ZFS if and only if $|S| \geq \Z(D)$. 
\item $W \subseteq V(D)$ is a critical set if and only if $|W| \geq n - \F(D)$.  
\end{enumerate}
\end{obs}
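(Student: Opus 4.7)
The plan is to prove the equivalences via the cycle (1)$\Leftrightarrow$(2)$\Leftrightarrow$(3), (4)$\Rightarrow$(1), and (2)$\Rightarrow$(4). The first three are short counting arguments, while (2)$\Rightarrow$(4) is the main obstacle and will require a small extension trick that leverages the paper's earlier observation that any maximum FZFS is stalled.

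For the easy parts: any set of size $\F(D)+1$ is too large to be a FZFS and hence must be a ZFS, giving $\Z(D)\leq\F(D)+1$, so $\F(D)\geq\Z(D)-1$ always; this immediately yields (1)$\Leftrightarrow$(2). The equivalence (2)$\Leftrightarrow$(3) is similar in spirit: under (2), any $S$ with $|S|\geq\Z(D)=\F(D)+1$ cannot be a FZFS and so is a ZFS, while conversely (3) forces every FZFS to have size $<\Z(D)$, giving $\F(D)\leq\Z(D)-1$. For (4)$\Rightarrow$(1) I would argue the contrapositive: if $\F(D)\geq\Z(D)$, take a minimum ZFS $Z$, noting $|Z|<n$ (otherwise $\F(D)\geq n$, impossible); some color change must occur from $Z$, so some $v\in Z$ has a unique empty out-neighbor, witnessing that $V\setminus Z$ is not critical despite $|V\setminus Z|=n-\Z(D)\geq n-\F(D)$.

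The main obstacle is (2)$\Rightarrow$(4). Let $W$ satisfy $|W|\geq n-\F(D)$; the case $W=V$ is vacuous, so assume $V\setminus W\neq\emptyset$. Then $|V\setminus W|\leq\F(D)<\Z(D)$, so by (3) the set $V\setminus W$ is a FZFS, but I must show it is actually stalled. Suppose for contradiction it is not, so there exist $v\in V\setminus W$ and $w\in W$ with $N^+(v)\cap W=\{w\}$. I would extend $V\setminus W$ to a set $S'$ of size exactly $\F(D)$ by adjoining $\F(D)-|V\setminus W|$ vertices chosen from $W\setminus\{w\}$; this is possible because $|W\setminus\{w\}|=|W|-1$, and the required inequality $|W|-1\geq\F(D)-|V\setminus W|$ simplifies to $\F(D)\leq n-1$, which holds since $V$ is always a ZFS. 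By (3), $|S'|=\F(D)<\Z(D)$ makes $S'$ a FZFS, and being of maximum cardinality $S'$ is stalled. However, $N^+(v)\cap(V\setminus S')\subseteq N^+(v)\cap W=\{w\}$ and contains $w$ since $w\notin S'$, so $v$ still has $w$ as its unique empty out-neighbor in $S'$, contradicting stalledness. The delicate point is precisely this extension: one must preserve the forcing $v\to w$ while growing to maximum size, which is why the added vertices are drawn specifically from $W\setminus\{w\}$ rather than from all of $W$.
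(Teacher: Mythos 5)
The paper records this as an observation with no proof, so there is nothing of the authors' to compare against; your write-up is a correct filling-in of the omitted verification. The chain (1)$\Leftrightarrow$(2)$\Leftrightarrow$(3) via the standing fact that any set of order $\F(D)+1$ is a ZFS is sound, as is the contrapositive argument for (4)$\Rightarrow$(1) (a minimum ZFS $Z\subsetneq V$ must admit a first color change, so $V\setminus Z$ witnesses the failure of (4)). Your treatment of (2)$\Rightarrow$(4) is also correct and identifies the genuinely non-trivial point: being a FZFS is weaker than being stalled, so one must upgrade $V\setminus W$ to a \emph{maximum} FZFS, which the paper has noted is automatically stalled; drawing the added vertices from $W\setminus\{w\}$ so that the offending force $v\to w$ survives the enlargement is exactly the right move, and your cardinality check ($\F(D)\leq n-1$) is what makes it possible. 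The one omission is the ``only if'' half of item (4): you show that every $W$ with $|W|\geq n-\F(D)$ is critical, but not that every critical $W$ satisfies $|W|\geq n-\F(D)$. That direction is unconditional and immediate --- by Observation~\ref{obs:critical}, $V\setminus W$ is a FZFS for any critical set $W$, hence $|V\setminus W|\leq\F(D)$ --- but it needs to be said for the biconditional in (4) to be fully established.
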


\begin{lemma}
Suppose $D$ is a digraph with $\F(D) < \Z(D)$.  Then for each $v \in V$, $\deg^+(v) \geq \Z(D)$ or $v$ is a sink. \label{lem:minoutdegree}
\end{lemma}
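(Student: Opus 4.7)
My approach is by contradiction. I will assume that $v$ is not a sink with $1 \leq \deg^+(v) = k \leq \Z(D)-1$ and then construct a set $S \subseteq V(D)$ of size exactly $\Z(D)-1$ such that a single application of the CCR expands $S$ to a set of size $\Z(D)$. The equivalence (3) in the Observation at the start of this section says that under the hypothesis $\F(D) < \Z(D)$, every set of size at least $\Z(D)$ is a ZFS and every set of smaller size is a FZFS; this $S$ will therefore turn out to be simultaneously a ZFS and a FZFS, which is the contradiction I want.

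Explicitly, I would fix any $u_1 \in N^+(v)$ (available because $v$ is not a sink) and set
\[
S \;=\; \{v\} \cup \bigl(N^+(v) \setminus \{u_1\}\bigr) \cup X,
\]
where $X \subseteq V(D) \setminus (\{v\} \cup N^+(v))$ is an arbitrary collection of $\Z(D)-k-1$ vertices. This choice is legitimate because $\Z(D)-k-1 \geq 0$ by the standing assumption and because $|V(D) \setminus (\{v\} \cup N^+(v))| = n-k-1 \geq \Z(D)-k-1$, which reduces to the trivial bound $n \geq \Z(D)$. A direct count yields $|S| = 1 + (k-1) + (\Z(D)-k-1) = \Z(D)-1$, and by construction $N^+(v) \setminus S = \{u_1\}$, so $u_1$ is the unique empty out-neighbor of $v$ and the CCR forces it at the first step.

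Consequently $B^1(S) = S \cup \{u_1\}$ has cardinality $\Z(D)$, so $B^1(S)$ is a ZFS by the Observation; since $B^\infty(S) = B^\infty(B^1(S)) = V(D)$, the set $S$ itself is a ZFS. But $|S| = \Z(D)-1 < \Z(D)$ forces $S$ to be a FZFS by the same Observation, a contradiction. The main conceptual obstacle is spotting the correct two-part design for $S$: the ingredient $N^+(v) \setminus \{u_1\}$ ensures that $v$ has exactly one empty out-neighbor and thus immediately forces, while the padding $X$ calibrates $|S|$ to land one below the ZFS threshold. Once $S$ is in hand, the rest of the argument is a routine bookkeeping check against equivalence (3).
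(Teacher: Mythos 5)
Your proof is correct and takes essentially the same route as the paper: you build the identical set $S = N^+[v]\setminus\{u_1\}$ padded to size $\Z(D)-1=\F(D)$, and derive the same contradiction (the paper says such a maximum FZFS must be stalled yet $v$ forces $u_1$; you say $S$ forces up to the ZFS threshold and is therefore simultaneously a ZFS and a FZFS). The two phrasings are equivalent uses of the observation that $\F(D)<\Z(D)$ implies a set is a ZFS iff its order is at least $\Z(D)$.
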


\begin{proof}
Let $v \in V$.  Suppose $0<\deg^+(v)<\Z(D)$.  Then  $\deg^+(v) \leq\Z(D) -1 = \F(D)$.  Let $u \in N_D^+(v)$, and let $S =  \left( N_D^+[v]\backslash \{u\}\right) \cup S'$, where $S'$ is any $\F(D)-\deg^+(v)$ vertices in $V \backslash N_D^+[v]$.   Then $|S| = \F(D)$, so $S$ is a maximum FZFS.  
	But since $\{u\} = N_D^+[v] \backslash S$, it follows that $S$ is not stalled, a contradiction.  Hence $\deg^+(v) \geq \Z(D)$ or $\deg^+(v)=0$.  
 \end{proof}

Lemma \ref{lem:minoutdegree} leads to the following observation and lemma.

\begin{obs}
If $D$ is a digraph with  $\F(D) < \Z(D)$, then every set $S \subseteq V$ with $|S|=\Z(D)$ contains a vertex $v$ with $\deg^+(v) = \Z(D)$ and $vw \in A$ for all $w \in S \backslash \{v\}$.  \label{obs:degequalz}
\end{obs}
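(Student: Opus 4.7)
The plan is to combine the preceding observation (items 1--3), which tells us that any set $S$ of size $\Z(D)$ must be a ZFS, with Lemma \ref{lem:minoutdegree}, which forces large out-degrees. The point is that since $S$ is a ZFS, it cannot be stalled, so at least one color change must be available at $S$ itself, and this color change will pin down a vertex $v \in S$ with the desired properties.

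More concretely, I would first fix $S \subseteq V(D)$ with $|S|=\Z(D)$. Since $|S|>\F(D)$, $S$ is a ZFS, and in particular $S$ is not stalled, so $B^1(S)\supsetneq B^0(S)=S$. By the definition of $B^1$, there exists $v\in S$ together with a (necessarily unique) vertex $u\in V\setminus S$ such that $\{u\}=N^+(v)\setminus S$. This $v$ is the candidate claimed by the statement.

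Next I would pin down $\deg^+(v)$. Since $u\in N^+(v)$ we have $\deg^+(v)\geq 1$, so $v$ is not a sink, and Lemma \ref{lem:minoutdegree} gives $\deg^+(v)\geq \Z(D)$. For the reverse inequality, note that $N^+(v)\setminus\{u\}\subseteq S$, and since $D$ is a simple digraph (no loops), $v\notin N^+(v)$, so in fact $N^+(v)\setminus\{u\}\subseteq S\setminus\{v\}$. Therefore
\[\deg^+(v)-1 \;=\; |N^+(v)\setminus\{u\}| \;\leq\; |S\setminus\{v\}| \;=\; \Z(D)-1.\]
Combining the two bounds yields $\deg^+(v)=\Z(D)$, and since $N^+(v)\setminus\{u\}\subseteq S\setminus\{v\}$ and both sides have size $\Z(D)-1$, equality of sets holds: $S\setminus\{v\}=N^+(v)\setminus\{u\}\subseteq N^+(v)$, i.e., $vw\in A$ for every $w\in S\setminus\{v\}$.

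There is no real obstacle here; the only subtle point is to make sure the forcing vertex $v$ produced by the non-stalled step actually lies in $S$ (which is automatic from the definition of $B^1$) and to invoke the no-loops convention so that $v$ is excluded from its own out-neighborhood, yielding the matching upper and lower bounds on $\deg^+(v)$.
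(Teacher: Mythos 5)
Your argument is correct and is exactly the derivation the paper intends (the observation is stated there without proof, as a consequence of Lemma \ref{lem:minoutdegree}): since $|S|=\Z(D)>\F(D)$, the set $S$ is a ZFS, so a first color change must occur from some $v\in S$ with unique empty out-neighbor $u\notin S$, and the lemma's lower bound $\deg^+(v)\geq\Z(D)$ together with the counting bound $|N^+(v)\setminus\{u\}|\leq|S\setminus\{v\}|=\Z(D)-1$ forces $\deg^+(v)=\Z(D)$ and $S\setminus\{v\}= N^+(v)\setminus\{u\}$.

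One caveat worth recording: the step ``$S$ is a ZFS, hence not stalled'' fails when $\Z(D)=n$, because then $S=V$ is a ZFS that is trivially stalled and produces no forcing vertex. In that case $D=\overline{K_n}$, no vertex has out-degree $\Z(D)$, and the observation itself is false as literally stated; both the statement and your proof should be read under the implicit assumption $\Z(D)<n$. This does not damage the paper's subsequent use of the observation, since for $\Z(D)=n$ the counting inequality in Lemma \ref{lem:valuesofn} holds trivially.
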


\begin{lemma}
If  digraph $D$ has  $\F(D) < \Z(D)$, then $\Z(D)\in \{1, 2, n-2, n-1, n\}$.  If $\Z(D)=2$, then $n \leq 5$.  \label{lem:valuesofn}
\end{lemma}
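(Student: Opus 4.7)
The plan is to combine Observation \ref{obs:degequalz} with a double-counting argument. Let $z = \Z(D)$ and set $T = \{v \in V : \deg^+(v) = z\}$. By Observation \ref{obs:degequalz}, every $z$-element subset $S \subseteq V$ contains some vertex $v \in S \cap T$ with $N^+(v) \supseteq S \setminus \{v\}$; I will call such a $v$ a \emph{dominator} of $S$. For a fixed $v \in T$, since $|N^+(v)| = z$, the $z$-sets dominated by $v$ are exactly those of the form $\{v\} \cup S'$ with $S' \subseteq N^+(v)$ and $|S'| = z-1$, so $v$ dominates exactly $\binom{z}{z-1} = z$ sets. Counting ordered pairs $(S, v)$ where $v$ dominates $S$, and using that every $z$-set has at least one dominator, I get
\[
\binom{n}{z} \;\leq\; |T| \cdot z \;\leq\; nz.
\]

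For $z = 2$, the bound reads $n(n-1)/2 \leq 2n$, which immediately gives $n \leq 5$, proving the second assertion. For the first assertion, I will rule out $3 \leq z \leq n - 3$. In this regime $n \geq 6$, and by the standard unimodality and symmetry of the binomial coefficients one has $\binom{n}{z} \geq \min\bigl(\binom{n}{3}, \binom{n}{n-3}\bigr) = \binom{n}{3}$, while $nz \leq n(n-3)$. So it suffices to check $\binom{n}{3} > n(n-3)$, which simplifies to $(n-1)(n-2) > 6(n-3)$, i.e., $(n-4)(n-5) > 0$, and this holds for all $n \geq 6$. Thus $\binom{n}{z} > nz$, contradicting the counting bound and excluding $z \in [3, n-3]$; hence $\Z(D) \in \{1, 2, n-2, n-1, n\}$.

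The main step is the double-counting estimate $\binom{n}{z} \leq nz$; once that is in hand, the remaining work is elementary arithmetic. The one point that deserves a sentence of care is that the observation supplies a vertex that is not just in $S$ but automatically in $T$ (because the cardinality condition $\deg^+(v) = z$ is built into its statement), so only vertices of $T$ contribute to the count, and each contributes at most $z$ dominated $z$-sets.
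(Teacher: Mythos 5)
Your proof is correct and follows essentially the same route as the paper: both count the $\binom{n}{z}$ sets of size $z$ against the at most $z$ such sets each out-degree-$z$ vertex can dominate (via Observation \ref{obs:degequalz}) to get $\binom{n}{z}\leq nz$, then conclude $z\leq 2$ or $z\geq n-2$ and, for $z=2$, $n\leq 5$. If anything, you are more careful than the paper, which asserts the implication from $\binom{n}{z}\leq nz$ without the unimodality computation you supply.
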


\begin{proof}
There are $n \choose \Z(D)$ sets $S$ with $|S|=\Z(D)$ in $D$. By Observation \ref{obs:degequalz}, there is a vertex $v$ in each set $S$ with $\deg^+(v) = \Z(D)$ and $vw \in A$ for all $w \in S \backslash \{v\}$. For any $v \in V$ with $\deg^+(v)=\Z(D)$, $v$ accounts for $\Z(D)$ sets $S$, giving us ${n \choose \Z(D)} \leq n\Z(D)$, which implies that $\Z(D) \leq 2$ or $\Z(D) \geq n-2$. 

If $\Z(D)=2$, then we have ${n \choose 2} \leq 2n$, which simplifies to $n \leq 5$.  
\end{proof}
We also make use of the following observation in our characterization of digraphs that have $\F(D) < \Z(D)$. 
\begin{obs}
Suppose $D$ is a digraph with $\deg^+(v) \leq 1$ for all $v \in V$.  For every $u, v \in V$, there exists $w \in V \backslash\{u, v\}$ such that exactly one of $wu \in A$ or $wv \in A$ is true if and only if $D$ consists of one of the following.
\begin{itemize}
\item  the union of vertex-disjoint cycles each of length at least $3$ that span all $n$ vertices, or
\item  the union of vertex-disjoint cycles each of length at least $3$ that span $n-1$ vertices, and 
\begin{itemize}  
\item a single isolated vertex, or
\item a single vertex that has exactly one other vertex as its out-neighbor.
\end{itemize}
\end{itemize} \label{obs:functionalgraph}
\end{obs}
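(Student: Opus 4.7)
The plan is to prove the biconditional by treating each direction separately: the reverse implication is a direct verification, while the forward implication combines a reformulation of the hypothesis with a handshake-style degree-sum argument and a case analysis on the number of sources of $D$.

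For ($\Leftarrow$), I would verify each listed structure satisfies the pairwise condition. Since $\deg^+(w) \le 1$, the condition ``exactly one of $wu, wv \in A$'' is equivalent to $N^+(w) = \{u\}$ or $N^+(w) = \{v\}$. Given any pair $u, v$ in which at least one vertex lies on a directed cycle of length at least $3$, the cycle predecessor of that vertex serves as the needed witness $w$: its unique out-neighbor is exactly that vertex, and it lies outside $\{u, v\}$ precisely because the cycle has more than two vertices. Pairs in which one of $u, v$ is the isolated vertex or the out-degree-one extra vertex reduce to the same argument applied to whichever of $u, v$ is cyclic.

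For ($\Rightarrow$), I would first recast the hypothesis combinatorially. Because $\deg^+(w) \le 1$, any in-neighbor of $u$ lying outside $\{u, v\}$ automatically has $u$ as its unique out-neighbor and hence supplies a witness. Consequently the condition fails for a pair $(u, v)$ exactly when $N^-(u) \subseteq \{v\}$ and $N^-(v) \subseteq \{u\}$, so the hypothesis forbids three patterns: (A) two distinct sources, (B) a source $u$ together with a vertex $v$ satisfying $N^-(v) = \{u\}$, and (C) a 2-cycle whose two vertices have no other in-neighbors. Pattern (A) already forces $D$ to have at most one source.

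I would then split on whether $D$ has zero or one source and, in the one-source case, on whether that source has out-degree zero or one. In each case the handshake identity $\sum_v \deg^-(v) = \sum_v \deg^+(v)$, combined with $\deg^+(v) \le 1$ and the in-degree lower bounds forced by the absence of additional sources (strengthened in the source-with-out-neighbor subcase by pattern (B) to $\deg^-(y) \ge 2$ for the out-neighbor $y$), pins the in- and out-degrees of every non-exceptional vertex to exactly $1$. The non-exceptional vertices therefore decompose as a vertex-disjoint union of directed cycles. The main obstacle is to eliminate cycles of length $1$ and $2$: loops are ruled out by simplicity of $D$, and 2-cycles must be excluded by reapplying pattern (C) in the no-source and isolated-source subcases, and by combining (B) and (C) in the subcase where the single source has out-degree one. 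Assembling these subcases together with the exceptional vertex (if any) matches exactly the three structures listed in the statement.
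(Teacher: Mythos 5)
The paper states this as an Observation and gives no proof, so there is nothing to compare your argument against; judged on its own, your reformulation of the hypothesis (failure for a pair $(u,v)$ is exactly $N^-(u)\subseteq\{v\}$ and $N^-(v)\subseteq\{u\}$, yielding the three forbidden patterns (A), (B), (C)) is correct, as are the handshake argument and the subcases with no source or with an isolated source. The genuine gap is in the subcase where the unique source $s$ has $N^+(s)=\{y\}$: you assert that 2-cycles are excluded ``by combining (B) and (C),'' but neither pattern rules out a 2-cycle through $y$ itself. If $y$ and $y'$ form a 2-cycle, then $N^-(y)\supseteq\{s,y'\}$ has size $2$, so the pair $(y,y')$ does not fail --- indeed $s$ is a legitimate witness for it --- and (B) fails to apply because $N^-(y)\neq\{s\}$. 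Concretely, $V=\{s,y,y'\}$ with $A=\{sy,\,yy',\,y'y\}$ has all out-degrees equal to $1$ and satisfies the pairwise condition for all three pairs ($y'$ witnesses $(s,y)$, $y$ witnesses $(s,y')$, and $s$ witnesses $(y,y')$), yet it is not a disjoint union of cycles of length at least $3$, with or without an exceptional vertex. The same happens for larger $n$, e.g.\ adjoining a directed $3$-cycle to this example. So the step fails, and it cannot be repaired: the Observation as stated is false, and the correct conclusion must additionally allow the cycle containing the out-neighbor of the exceptional vertex to have length $2$. (This also means the paper's appeal to this Observation inside Theorem \ref{thm:bigcharacterization} is using a hypothesis --- ``every set of size $n-2$ is not stalled'' --- that is strictly weaker than the ZFS property actually available there.)

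A second, much smaller issue is in your reverse direction: the cycle predecessor of $u$ need not lie outside $\{u,v\}$ even on a long cycle, since it may equal $v$ when $v$ immediately precedes $u$. The fix is routine --- the predecessors of $u$ and of $v$ cannot both coincide with the other vertex unless $u,v$ form a 2-cycle, so at least one of the two predecessors is a valid witness --- but as written the justification ``because the cycle has more than two vertices'' does not cover this case.
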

Three examples of digraphs that satisfy Observation \ref{obs:functionalgraph} are shown in Figure \ref{fig:complement}.

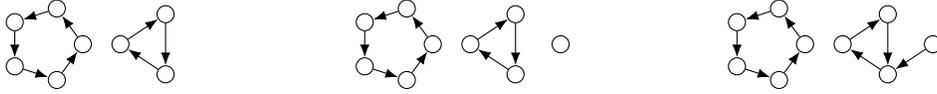
\begin{figure}[htbp]
\begin{center}
\begin{minipage}{0.3\textwidth}
\begin{center}
\begin{tikzpicture}
\def\Radius{0.5cm}
\tikzstyle{vertex}=[draw, circle, inner sep=0.8mm]
\tikzset{edge/.style = {->,> = Latex}}
\node (v0) at (0:\Radius) [vertex]{};
\foreach \y in {72, 144, ..., 288}{
\node (v\y) at (\y:\Radius) [vertex]{};}
\foreach \y [evaluate=\y as \x using {int(\y+72)}] in {0, 72, ..., 216}{
\draw[edge] (v\y) to (v\x);}
\draw[edge] (v288) to (v0);

\node(w0) at (1,0) [vertex]{};
\node(w1) at (1.6,0.4) [vertex]{};
\node(w2) at (1.6,-0.4) [vertex]{};

\foreach \y [evaluate=\y as \x using {int(\y+1)}] in {0, 1}{
\draw[edge] (w\y) to (w\x);}
\draw[edge] (w2) to (w0);
\end{tikzpicture}
\end{center}
\end{minipage}
\begin{minipage}{0.3\textwidth}
\begin{center}
\begin{tikzpicture}
\def\Radius{0.5cm}
\tikzstyle{vertex}=[draw, circle, inner sep=0.8mm]
\tikzset{edge/.style = {->,> = Latex}}
\node (v0) at (0:\Radius) [vertex]{};
\foreach \y in {72, 144, ..., 288}{
\node (v\y) at (\y:\Radius) [vertex]{};}
\foreach \y [evaluate=\y as \x using {int(\y+72)}] in {0, 72, ..., 216}{
\draw[edge] (v\y) to (v\x);}
\draw[edge] (v288) to (v0);

\node(w0) at (1,0) [vertex]{};
\node(w1) at (1.6,0.4) [vertex]{};
\node(w2) at (1.6,-0.4) [vertex]{};

\node(x) at (2.2, 0) [vertex]{};
\foreach \y [evaluate=\y as \x using {int(\y+1)}] in {0, 1}{
\draw[edge] (w\y) to (w\x);}
\draw[edge] (w2) to (w0);
\end{tikzpicture}
\end{center}
\end{minipage}
\begin{minipage}{0.3\textwidth}
\begin{center}
\begin{tikzpicture}
\def\Radius{0.5cm}
\tikzstyle{vertex}=[draw, circle, inner sep=0.8mm]
\tikzset{edge/.style = {->,> = Latex}}
\node (v0) at (0:\Radius) [vertex]{};
\foreach \y in {72, 144, ..., 288}{
\node (v\y) at (\y:\Radius) [vertex]{};}
\foreach \y [evaluate=\y as \x using {int(\y+72)}] in {0, 72, ..., 216}{
\draw[edge] (v\y) to (v\x);}
\draw[edge] (v288) to (v0);

\node(w0) at (1,0) [vertex]{};
\node(w1) at (1.6,0.4) [vertex]{};
\node(w2) at (1.6,-0.4) [vertex]{};

\node(x) at (2.2, 0) [vertex]{};
\foreach \y [evaluate=\y as \x using {int(\y+1)}] in {0, 1}{
\draw[edge] (w\y) to (w\x);}
\draw[edge] (w2) to (w0);
\draw[edge] (x) to (w2);
\end{tikzpicture}
\end{center}
\end{minipage}
\caption{Complements of some digraphs with $\F(D) < \Z(D)$}
\label{fig:complement}
\end{center}
\end{figure}

A \emph{tournament} $\og{K_n}$ is an oriented graph obtained by assigning an orientation to each edge in a complete graph, $K_n$.  A tournament is \emph{regular} if $\deg^+(v)=\deg^-(v)$ for every $v \in V$.
We also define a graph operation that we will use throughout the remainder of this section. 

\begin{dfn}
The  \emph{outjoin} from digraph $D$ to digraph $H$ denoted $D \overrightarrow{\vee} H$ is the digraph with vertex set $V(D \overrightarrow{\vee} H) = V(D) \cup V(H)$ and arc set $A(D \overrightarrow{\vee} H) = A(D) \cup A(H) \cup \{(v_{D}, v_{H}) : v_{D} \in V(D), v_{H} \in V(H)\}$.  
\end{dfn}

We now present the characterization of digraphs that have $\F(D) < \Z(D)$. Figure \ref{fig:complement} shows some examples of complements of digraphs that satisfy Item \ref{characterizationremovecycles} of Theorem \ref{thm:bigcharacterization}, and Figure \ref{fig:knoutjoinempty} shows an example of a digraph that satisfies Item \ref{characterizationcomplete}.

\begin{theorem}  \label{thm:bigcharacterization}
A digraph $D$ has $\F(D) < \Z(D)$ if and only if $D$ is one of the following.
\begin{enumerate}
\item a directed cycle. \label{characterizationdirectedcycle}
\item a regular tournament on 5 vertices. \label{tournament5}
\item A digraph obtained from $K_n$ by removing the arcs of  \label{characterizationremovecycles}
\begin{enumerate}
\item a collection of vertex-disjoint directed cycles each of length at least 3 that span $V$ ($n\geq 3$), \label{cyclesspan}
\item a collection of vertex-disjoint directed cycles each of length at least 3 that span $V \backslash \{v\}$ for some $v \in V$ ($n\geq 4$), or  \label{cyclesspanbutone}
\item $vu$ for some $u, v \in V$ and  a collection of vertex-disjoint directed cycles each of length at least 3 that span $V \backslash \{v\}$ ($n\geq 4$). \label{cyclesspanleaf}
\end{enumerate}
\item A digraph obtained from $K_{n-1}  \overrightarrow{\vee} \{v\}$ by removing the arcs of a collection of vertex-disjoint directed cycles  each of length at least 3 that span $K_{n-1}$ ($n \geq 4$).  \label{characterizationsinkcomplement}
\item  $K_j \overrightarrow{\vee} \overline{K_{\ell}}$ where $j \geq 2$ and $\ell \geq 0$. \label{characterizationcomplete}
\item $\overline{K_n}$. \label{characterizationisolated}
\end{enumerate}
\end{theorem}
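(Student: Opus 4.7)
The plan is to prove both directions. For the backward direction, I verify each of the six listed families has $\F(D) < \Z(D)$ by direct computation, using Proposition \ref{prop:charnminusk}, Corollary \ref{cor:source}, and Theorem \ref{thm:f0} to identify $\F(D)$, together with a small explicit ZFS to bound $\Z(D)$ from above.

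For the forward direction, assume $\F(D) < \Z(D)$. Lemma \ref{lem:valuesofn} forces $\Z(D) \in \{1, 2, n-2, n-1, n\}$, and I dispatch each value. If $\Z(D) = 1$ then $\F(D) = 0$, so Theorem \ref{thm:f0} gives case 1. If $\Z(D) = n$ then every $V \setminus \{v\}$ is a FZFS, which forces no vertex to have an in-neighbor, giving $D = \overline{K_n}$ (case 6). If $\Z(D) = n-1$, Lemma \ref{lem:minoutdegree} says every vertex is a sink or has out-degree $n-1$, so $D$ splits as $K_j \overrightarrow{\vee} \overline{K_\ell}$; the condition $j \geq 2$ follows because $j \leq 1$ would leave the set of sinks as a stalled $(n-1)$-set and push $\Z(D)$ to $n$, giving case 5. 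If $\Z(D) = 2$ then $n \leq 5$; the cases $n \leq 4$ collapse into the $\Z(D) \in \{n-2, n-1, n\}$ analyses, and for $n = 5$ Observation \ref{obs:degequalz} combined with the counting bound $\binom{5}{2} \leq 5 \cdot 2$ forces every vertex to have out-degree exactly $2$ and every pair of vertices to be joined by at least one arc; since $5 \cdot 2 = \binom{5}{2}$, no pair has both arcs, so $D$ is a tournament, necessarily the regular tournament on $5$ vertices (case 2).

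The heart of the argument is $\Z(D) = n-2$, $\F(D) = n-3$. By Lemma \ref{lem:minoutdegree}, each vertex is a sink or misses at most one out-arc; let $c(w)$ denote that missing out-neighbor when it exists. Form the auxiliary digraph $\tilde H$ on $V$ whose arc set is $\{(w, c(w)) : w \text{ is a non-sink and } c(w) \text{ is defined}\}$. By construction $\tilde H$ has maximum out-degree $1$. The condition $\F(D) = n - 3$ translates, using Proposition \ref{prop:charnminusk} and the fact that once the first color-change from an $(n-2)$-set occurs the resulting $(n-1)$-set has order exceeding $\F(D)$ and is therefore a ZFS, to: for every pair $\{u_1, u_2\} \subseteq V$, some $w \in V \setminus \{u_1, u_2\}$ has $c(w) \in \{u_1, u_2\}$, which is exactly the hypothesis of Observation \ref{obs:functionalgraph} applied to $\tilde H$. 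Hence $\tilde H$ is one of the three configurations from that observation. The vertices of out-degree $0$ in $\tilde H$ are precisely the sinks of $D$ together with non-sinks whose $c$ is undefined, and Observation \ref{obs:functionalgraph} permits at most one such vertex. Tracing through the three types then yields cases 3(a), 3(b), 3(c) when $D$ has no sinks (corresponding to spanning cycles, cycles plus an isolated full-out-degree vertex, and cycles plus a single extra removed arc, respectively) and case 4 when $D$ has exactly one sink (which must be the isolated vertex, with the remaining vertices in cycles of length at least $3$); two or more sinks is excluded.

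The main obstacle will be setting up the auxiliary digraph $\tilde H$ correctly and showing that the $\F(D) = n-3$ forcing condition reduces cleanly to the hypothesis of Observation \ref{obs:functionalgraph}; once this translation is in place, the case analysis becomes routine. Additional care is required in the $\Z(D) = n-1$ analysis to rule out $j \leq 1$ and in the $n = 5$, $\Z(D) = 2$ subcase to extract the regular tournament structure from the counting bound.
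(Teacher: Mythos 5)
Your proposal is correct, and its skeleton (casework on $\Z(D)\in\{1,2,n-2,n-1,n\}$ via Lemma \ref{lem:valuesofn}, with Theorem \ref{thm:f0} for $\Z(D)=1$ and Observation \ref{obs:functionalgraph} driving the $\Z(D)=n-2$ case) matches the paper's. Where you genuinely diverge is in the two hardest spots, and in both your route is cleaner. For $\Z(D)=n-2$ the paper splits into ``no sink'' (applying Observation \ref{obs:functionalgraph} directly to $\overline{D}$, which has maximum out-degree at most $1$ only when there is no sink) and ``sink present,'' the latter requiring a lengthy bespoke chain argument ($w_0, w_1,\dots$, then $x_0, x_1,\dots$) just to show every vertex has an arc into the sink. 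Your auxiliary digraph $\tilde H$ of ``missing out-arcs of non-sinks'' has maximum out-degree at most $1$ unconditionally, so a single application of the observation covers both cases, with the sink/full-out-degree dichotomy read off afterward from which vertex of $\tilde H$ lies off the cycles; this absorbs the paper's most laborious passage into bookkeeping. The translation step is sound: since $\F(D)=n-3$, every set of order at least $n-2$ is a ZFS, so an $(n-2)$-set forces if and only if its first color change occurs, which for a set $V\setminus\{u_1,u_2\}$ happens exactly when some $w$ has $c(w)\in\{u_1,u_2\}$. Similarly, for $n=5$, $\Z(D)=2$, your equality analysis of $\binom{5}{2}\le 2\cdot 5$ extracts ``all out-degrees equal $2$, each pair covered exactly once, hence a regular tournament'' in one stroke, whereas the paper argues by a sequence of contradictions and an arc count; your version also handles $n=3$ more carefully by routing it through the $\Z(D)=n-1$ analysis (the paper's claim that $n=3$ forces $D=K_3$ quietly overlooks the sink possibility $K_2\overrightarrow{\vee}\overline{K_1}$, which is still covered by Item \ref{characterizationcomplete}). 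The only thinness is the reverse direction, which you dispatch as ``direct computation''; the verifications there (e.g.\ that every $(n-3)$-set is stalled in the Item \ref{characterizationremovecycles} digraphs, and that any $2$-set forces in a regular $5$-tournament) are routine but do need to be written out, as the paper does.
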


\begin{proof} 
For the forward direction, suppose $\F(D)<\Z(D)$.  By Lemma \ref{lem:valuesofn}, $\Z(D)\in \{1, 2, n-2, n-1, n\}$.  

$\mathbf{\Z(D)=1}$:
If $\F(D) < \Z(D) = 1$, by Theorem \ref{thm:f0} we have that $D$ is a directed cycle,  Item \ref{characterizationdirectedcycle}.  

$\mathbf{\Z(D)=2}$:
Suppose $\Z(D)=2$ and $\F(D)=1$.  By Lemmas  \ref{lem:minoutdegree} and \ref{lem:valuesofn}, $3 \leq n \leq 5$.     If $n=3$, then $\deg^+(v)=2$ for each $v\in V$, implying $D=K_3$, Item \ref{characterizationcomplete}.  If $n=4$, then $\Z(D)=2 =n-2$, and is discussed with the case $\Z(D)=n-2$.  Suppose $n=5$. Note that for any $u, w \in V$, $uw \in A$ or $wu \in A$ since if not, $\{u, w\}$ is a FZFS of order $2$.  We show that $\deg^+(v)=2$ for each $v \in V$.  Suppose $D$ contains a vertex $v$ that is either a sink or $\deg^+(v) \geq 3$.  Any pair of vertices with this property would form a FZFS of order $2$, a contradiction, so $D$ contains at most one such vertex.  Note that if $|N^+(u) \backslash \{v\}| \geq 2$ for some $u \in V \backslash \{v\}$, then $\{u, v\}$ forms a FZFS of order $2$, a contradiction.  Hence $uv \in A$ and $\deg^+(u) = 2$ for each $u \in V \backslash \{v\}$.  But then $|V \backslash \{v\}| = 4$ with only $4$ arcs among $V \backslash \{v\}$, implying that $uw \notin A$ and $wu \notin A$ for some $u, w \in V \backslash \{v\}$, a contradiction.  Hence, if  $n=5$ and $\F(D)< \Z(D)=2$, then $\deg^+(v)=2$ for every $v \in V$, implying $\sum_{v \in V} \deg^+(v) = 10$.   Since we know that $uw \in A$ or $wu \in A$ for each $u, w \in V$, we have $|A| \geq {5 \choose 2} = 10$.  Hence, we must have that $D$ satisfies Item \ref{tournament5}.  

$\mathbf{\Z(D)=n-2}$:
First, assume that $D$ has no sink.  Since $\Z(D)=n-2$,  for every $u, v \in V$ there exists some $w \in V$ such that $wu \notin A$ or $wv \notin A$.  Assume without loss of generality $wu \notin A$.  Since $\deg^+(w) \geq n-2$ by Lemma \ref{lem:minoutdegree},  $wx \in A$ for every $x \in V \backslash \{u\}$, and $\overline{D}$ has the property that $\deg^+(x) \leq 1$ for each $x \in V$.  By applying Observation \ref{obs:functionalgraph} to $\overline{D}$, we conclude that if $D$ has no sink and $\F(D) <\Z(D)=n-2$, then $D$ satisfies one of Items \ref{cyclesspan} -- \ref{cyclesspanleaf}.  

If $D$ has a sink $v$, we  show that $uv \in A$ for each $u \in V \backslash \{v\}$.  Suppose $uv \notin A$.  Then $V \backslash \{u, v\}$ is a ZFS, so there exists $w \in V \backslash \{u, v\}$ such that exactly one of $wv \notin A$  or $wu \notin A$ is true.  We first consider the case that $wu \in A$ for all $w \in V \backslash \{u, v\}$.  Thus, $w_0v \notin A$ for some $w_0 \in V \backslash \{u, v\}$.  Now, $V \backslash \{w_0, u\}$ is a ZFS, and we just assumed that $wu \in A$  for all $w \in V \backslash \{u, v\}$, so there exists $w_1 \in V \backslash \{u, v, w_0\}$ such that $w_1 w_0 \notin A$.  Note that $w_1 \notin  \{u, v, w_0\}$ since $v$ is a sink, and by Lemma \ref{lem:minoutdegree}, $\deg^+(u), \deg^+(w_0) \geq n-2$.  Since $V \backslash \{u, w_1\}$ is a ZFS, and we assumed $wu \in A$ for all $w \in V$, there exists $w_2 \in V \backslash \{u, v, w_0, w_1\}$ such that $w_2 w_1 \notin A$.  We can repeat this argument indefinitely, but since $|V|$ is finite, eventually run out of vertices.  

Thus it must be that $w_0 u \notin A$ (but $w_0 v \in A$) for some $w_0 \in V$.  If there is some vertex $w_1 \in V \backslash\{w_0, u, v\}$ such that $w_1 w_0 \notin A$, but $w_1 u \in A$, then consider whether there is a vertex $w_2 \in V \backslash\{w_0, w_1 u, v\}$ such that $w_2 w_1 \notin A$ but $w_2 w_0 \in A$, and so forth until we come to $w_i$ that has no such vertex $w_{i+1}$.   Then $V \backslash \{w_i, w_{i-1}\}$ (or $V \backslash \{w_i, u\}$ if $i=0$) is a ZFS, so there must exist $x_0 \in V$ such that $x_0 w_{i-1} \notin A$ (or $x _0u \notin A$ if $i=0$) but $x_0 w_i \in A$.  We perform the same argument on $x_0$, $x_1$ etc. as on $w_0$, $w_1$ etc., until we find $x_j$ for which there is no $x_{j+1}$.  Then $V \backslash \{x_j, w_i\}$ forms a FZFS, a contradiction.  Hence, if $v$ is a sink, then $uv \in A$ for each $u \in V \backslash \{v\}$, also implying that $D$ has at most one sink.  

Suppose $D$ has a sink $v$.  Since $\{u, v\}$ is a ZFS and $uv \in A$ for every $u \in V \backslash \{v\}$, every $u \in V \backslash \{v\}$ has the property that $wu \notin V$ for some $w \in V \backslash \{u, v\}$.  Recalling that $\deg^+(u) \geq n-2$ for each $u \in V \backslash \{v\}$, the complement of the digraph induced by $V \backslash \{v\}$ is the union of vertex-disjoint directed cycles each of length at least $3$.  Consequently, $D$ satisfies Item \ref{characterizationsinkcomplement}.  

$\mathbf{\Z(D)=n-1}$:
By Lemma \ref{lem:minoutdegree}, if $\F(D) <\Z(D)=n-1$, then for any $v \in V$ with $\deg^+(v) >0$,  we have  
$\deg^+(v) =n-1$.  If there are no vertices with $\deg^+(v)>0$, then $D$ consists of a set of isolated vertices, which has $\Z(D) =n$, a contradiction.  If there is exactly one vertex $v$ with  $\deg^+(v)>0$, then $V \backslash \{v\}$ is a FZFS of order $n-1$, a contradiction.  Hence, there are at least $2$ vertices with out-degree $n-1$, and $D$ satisfies Item \ref{characterizationcomplete}.  

$\mathbf{\Z(D)=n}$:  If $\Z(D)=n$, then $A = \emptyset$.  Hence $D$ satisfies Item \ref{characterizationisolated}.

For the reverse direction, Item \ref{characterizationdirectedcycle} was established in Theorem \ref{thm:f0}.  

For Item  \ref{tournament5}, since $\deg^+(v)=2$ for every $v \in V$, every vertex is a FZFS.  Let $S=\{u,v\}$ for any $u, v \in V$.  Either $uv \in A$ or $vu \in A$.  Assume without loss of generality $uv \in A$.  Then $N^+[u] \backslash S =\{w\}$ for some $w \in V$, so $B^1(S) =\{u, v, w\}$.  Either $wv \in A$ or $vw \in A$, so there exists $x \in V$ with $\{x\} = N^+[w] \backslash B^1(S)$ (without loss of generality).   Finally, $B^2(S) = V \backslash \{y\}$ for some $y \in V$, and $\deg^-(y) = 2$, so $B^3(S)=V$, and $S$ is a ZFS.   Hence, $\F(D) < \Z(D)=2$.  

Suppose $D$ satisfies Item \ref{characterizationremovecycles} or Item \ref{characterizationsinkcomplement}.  Pick any $u, v \in V$ and let $S = V \backslash \{u,v\}$.  Then there exists $w \in S$ such that without loss of generality $wu \notin A$ and $wv \in A$, so $B^1(S) = V \backslash \{u\}$.  The vertex $u$ has $\deg^-(u) \geq n-2$, giving us $B^2(S)=V$.  Thus any set $S \subseteq V$ with $|S|=n-2$ is a ZFS.  Let $X = V \backslash \{u, v, w\}$ for any $u, v, w \in V$.  Let $x \in X$.  Then either $\deg^+(x)=0$ if $x$ is a sink, in which case $xu, xv, xw \notin A$, or $\deg^+(x) \geq n-2$, implying that at most one of $\{xu, xv, xw\}$ is not an arc in $D$.  Hence any set $X$ with $|X|=n-3$ is a FZFS, giving us that $\F(D) < \Z(D) = n-2$.

Suppose $D$ satisfies Item \ref{characterizationcomplete}.  Let $S = V \backslash \{v\}$ for any $v \in V$.  Since $\deg^-(v) \geq 1$, $B^1(S)=V$, and $S$ is a ZFS with $|S|=n-1$.  If $n=j=2$ and $\ell =0$, we have $\F(D)=0$ and we are done.  Otherwise,  let $X = V \backslash \{u, v\}$ for any $u, v \in V$.  Then $N^-(u)\cap X =N^-(v) \cap X$; hence $B^1(X)=X$, and $X$ is a FZFS with $|X|=n-2$.  Hence, $\F(K_j \overrightarrow{\vee} \overline{K_{\ell}} )  < \Z( K_j \overrightarrow{\vee} \overline{K_{\ell}} )$. 

Finally, Item \ref{characterizationisolated} was established in \cite{fetcie2014failed}, completing the characterization.
\end{proof}

\begin{figure}[h!]
\begin{center}
\begin{tikzpicture}[auto]
\tikzstyle{vertex}=[draw, circle, inner sep=0.8mm]
\tikzset{edge/.style = {->,> = Latex}}
\def\Radius{0.9cm}
\def\number{5}

\foreach \y  in {1, ..., \number}{
\node (v\y) at ({360/\number*(\y-1)}:\Radius) [vertex]{};}
\node (vin) at (2.2*\Radius, 0.6*\Radius) [vertex]{};
\node (vin2) at (2.2*\Radius, -0.6*\Radius) [vertex]{};
\foreach \y  in {2, ..., \number}{
\draw [edge, bend right] (v\y) to (v1);}

\foreach \y  in {1, 3, 4, ..., \number}{
\draw [edge, bend right] (v\y) to  (v2);}

\foreach \y  in {1, 2, 4, 5}{
\draw [edge, bend right] (v\y) to(v3);}
\foreach \y  in {1, 2, 3, 5}{
\draw [edge, bend right] (v\y) to(v4);}
\foreach \y  in {1, 2, 3, 4}{
\draw [edge, bend right] (v\y) to(v5);}

\foreach \y in {1,2,...,\number}{
\draw[edge](v\y) to (vin);
\draw[edge](v\y) to (vin2);
}

\end{tikzpicture}
\end{center}
\caption{ $\F(K_5 \protect\overrightarrow{\vee}  \overline{K_2})=5$ and  $\Z(K_5 \protect\overrightarrow{\vee}  \overline{K_2}) =6$.}
\label{fig:knoutjoinempty}
\end{figure}
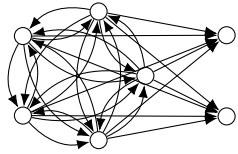

Restricting Theorem \ref{thm:bigcharacterization} to oriented graphs gives us the following characterization.
\begin{corollary}
An oriented graph $\ogg$ has the property that $\F(\ogg) < \Z(\ogg)$ if and only if $\ogg$ is one of the following.

\begin{enumerate}
\item $\overline{K}_n$, \label{emptygraph}
\item a directed cycle, \label{directedcycle}
\item a directed 3-cycle with all 3 vertices outjoined to one additional vertex,  \label{tournamentextravertex}
\item a regular tournament on 5 vertices. \label{tournament}
\end{enumerate}
\end{corollary}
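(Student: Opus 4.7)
My plan is to apply Theorem~\ref{thm:bigcharacterization} and sift its six items down to those whose digraphs happen to be oriented, that is, contain no pair of opposite arcs $uv, vu$.

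Three of the cases are immediate. Item~\ref{characterizationdirectedcycle} (a directed cycle) is oriented since any directed cycle has length at least~$3$; Item~\ref{tournament5} (a regular tournament on $5$ vertices) is oriented by definition; and Item~\ref{characterizationisolated}, $\overline{K_n}$, has no arcs and is trivially oriented. These recover Items~\ref{directedcycle}, \ref{tournament}, and~\ref{emptygraph} of the corollary, respectively. On the other hand, Item~\ref{characterizationcomplete} of the theorem, $K_j\,\overrightarrow{\vee}\,\overline{K_\ell}$ with $j\geq 2$, contains both $uv$ and $vu$ for any two vertices of the $K_j$ factor, so no such graph is oriented.

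The substantive work is sifting Items~\ref{characterizationremovecycles} and~\ref{characterizationsinkcomplement}, both of which begin with a complete digraph (on $V$ or on $V\setminus\{v\}$) and then delete the arcs of vertex-disjoint directed cycles. The key counting observation is that for the result to be oriented, at least one of the two arcs between every pair of vertices must be deleted. In Item~\ref{cyclesspan} the cycles remove exactly $n$ arcs, while oriented-ness would require $\binom{n}{2}$ deletions; equating these forces $n=3$, in which case the surviving arcs form the opposite directed $3$-cycle, already captured by Item~\ref{directedcycle} of the corollary. In Items~\ref{cyclesspanbutone} and~\ref{cyclesspanleaf}, the distinguished vertex~$v$ keeps almost all of its incident arcs in both directions (at worst the single arc $vu$ is removed in Item~\ref{cyclesspanleaf}), so for any $w\neq u,v$ both $vw$ and $wv$ survive and no oriented graph arises. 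For Item~\ref{characterizationsinkcomplement} the analogous count inside the $K_{n-1}$ factor gives $(n-1)(n-3)=\binom{n-1}{2}$, forcing $n=4$; the $K_3$ factor becomes a directed $3$-cycle, and the outjoin to the sink produces exactly the configuration described in Item~\ref{tournamentextravertex} of the corollary.

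For the reverse direction I would verify that each of the four listed oriented graphs appears in Theorem~\ref{thm:bigcharacterization}: $\overline{K_n}$ is Item~\ref{characterizationisolated}, a directed cycle is Item~\ref{characterizationdirectedcycle}, the regular tournament on $5$ vertices is Item~\ref{tournament5}, and the directed $3$-cycle outjoined to an additional vertex is precisely the $n=4$ instance of Item~\ref{characterizationsinkcomplement}; Theorem~\ref{thm:bigcharacterization} then yields $\F(\ogg)<\Z(\ogg)$ in each case. I expect the main obstacle to be the careful accounting in Items~\ref{cyclesspanbutone}--\ref{cyclesspanleaf} and~\ref{characterizationsinkcomplement}: making sure that no small-$n$ edge case (for instance a cycle collection consisting of a single $3$-cycle together with a lone vertex) sneaks through as oriented, and confirming that the $n=4$ instance of Item~\ref{characterizationsinkcomplement} matches the description in Item~\ref{tournamentextravertex} of the corollary exactly.
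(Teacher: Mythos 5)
Your proposal is correct and is essentially the paper's own argument: the paper presents this corollary as an immediate restriction of Theorem~\ref{thm:bigcharacterization} to oriented graphs, and your item-by-item sifting---with the arc counts forcing $n=3$ in Item~\ref{cyclesspan} and $n=4$ in Item~\ref{characterizationsinkcomplement}, and the observation that Items~\ref{cyclesspanbutone}, \ref{cyclesspanleaf}, and \ref{characterizationcomplete} always retain a digon---is exactly the verification that restriction requires. The only small imprecision is the claim that ``any directed cycle has length at least~$3$'': under the paper's definitions a directed $2$-cycle is a directed cycle but is not oriented, so Item~\ref{characterizationdirectedcycle} contributes only the directed cycles of length at least~$3$, which does not affect the conclusion.
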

 
\begin{center}
\begin{figure}[h!]
\begin{minipage}{1.5in}
\begin{center}
\begin{tikzpicture}[auto, scale=0.8]
\def\Radius{0.8cm}
\tikzstyle{vertex}=[draw, circle, inner sep=0.8mm]
\tikzstyle{dottedvertex}=[draw, circle, dotted, inner sep=0.8mm]
\tikzset{edge/.style = {->,> = Latex}}
\node (v1) at (0, 0)[vertex]{};
\node (v2) at (0.6, 0.3)[vertex]{};
\node (v3) at (0.9, -0.3)[vertex]{};
\node (v4) at (0.3, -0.6)[vertex]{};
\end{tikzpicture}
\end{center}
\end{minipage}
\begin{minipage}{1.5in}
\begin{center}
\begin{tikzpicture}[auto, scale=0.8]
\def\Radius{0.7cm}
\tikzstyle{vertex}=[draw, circle, inner sep=0.8mm]
\tikzstyle{dottedvertex}=[draw, circle, dotted, inner sep=0.8mm]
\tikzset{edge/.style = {->,> = Latex}}
\foreach \y in {0, 60}{
\node (v\y) at (\y:\Radius) [vertex]{};}
\foreach \y in {120}{
\node (v\y) at (\y:\Radius) [vertex]{};}
\foreach \y [evaluate=\y as \x using {int(\y+60)}] in {0, 60}{
\draw[edge] (v\y) to (v\x);}
\draw[edge, dotted] (v120) to (135:\Radius) to (150:\Radius) to (165:\Radius) to (180:\Radius) to (195:\Radius) to (210:\Radius) to (225:\Radius) to (240:\Radius) to (255:\Radius) to (270:\Radius) to (285:\Radius) to (300:\Radius) to (315:\Radius) to (330:\Radius) to (345:\Radius) to (v0);
\end{tikzpicture}
\end{center}
\end{minipage}
\begin{minipage}{1.5in}
\begin{center}
\begin{tikzpicture}[auto, scale=0.8]
\def\Radius{0.7cm}
\tikzstyle{vertex}=[draw, circle, inner sep=0.8mm]
\tikzstyle{dottedvertex}=[draw, circle, dotted, inner sep=0.8mm]
\tikzset{edge/.style = {->,> = Latex}}
\foreach \y in {0, 1,2}{
\node (v\y) at (120*\y:\Radius) [vertex]{};}
\node (vin) at (1.5, 0) [vertex]{};
\draw[edge] (v0) to (v1);
\draw[edge] (v1) to (v2);
\draw[edge] (v2) to (v0);
\draw[edge] (v0) to (vin);
\draw[edge, bend left] (v1) to (vin);
\draw[edge, bend right] (v2) to (vin);
\end{tikzpicture}
\end{center}
\end{minipage}
\begin{minipage}{1.5in}
\begin{center}
\begin{tikzpicture}[auto, scale=0.8]
\def\Radius{0.7cm}
\tikzstyle{vertex}=[draw, circle, inner sep=0.8mm]
\tikzstyle{dottedvertex}=[draw, circle, dotted, inner sep=0.8mm]
\tikzset{edge/.style = {->,> = Latex}}
\foreach \y in {0, 1, ..., 4}{
\node (v\y) at (72*\y:\Radius) [vertex]{};}
\draw[edge] (v0) to (v1);
\draw[edge] (v0) to (v2);

\draw[edge] (v1) to (v2);
\draw[edge] (v1) to (v3);

\draw[edge] (v2) to (v3);
\draw[edge] (v2) to (v4);

\draw[edge] (v3) to (v0);
\draw[edge] (v3) to (v4);

\draw[edge] (v4) to (v0);
\draw[edge] (v4) to (v1);
\end{tikzpicture}
\end{center}
\end{minipage}

\caption{All oriented graphs with $\F(\protect\overrightarrow{G}) < \Z(\protect\overrightarrow{G})$.  For the first two digraphs, $|V|\geq 1$.}
\label{fig:flessthanz}
\end{figure}
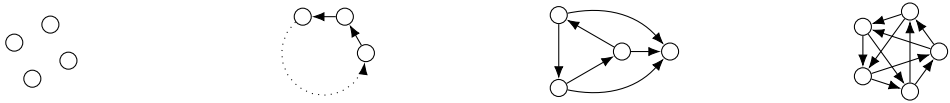 
\end{center}

\begin{corollary}
The following list contains the only digraphs with the property that there exists $k$ such that $W \subseteq V$ is a critical set if and only if $|W| \geq k$.   
\label{cor:criticalk}
\begin{enumerate}
\item $\overline{K}_n$,  ($k=1$), \label{emptygraphcritical}
\item  $K_j \overrightarrow{\vee} \overline{K_{\ell}}$ where $j \geq 2$ and $\ell \geq 0$, ($k=2$),
\item a digraph obtained from $K_n$ (where $n\geq 4$ except for (a) which allows $n \geq 3$) by removing the arcs of: (a) a collection of vertex-disjoint directed cycles each of length at least 3 that span $V$, (b) a collection of vertex-disjoint directed cycles each of length at least 3 that span $V \backslash \{v\}$ for some $v \in V$, or  (c) $vu$ for some $u, v \in V$ and  a collection of vertex-disjoint directed cycles each of length at least 3 that span $V \backslash \{v\}$, ($k=3$),
\item a digraph obtained from $K_{n-1}  \overrightarrow{\vee} \{v\}$ where $n \geq 4$ by removing the arcs of a collection of vertex-disjoint directed cycles  each of length at least 3 that span $K_{n-1}$, ($k=3$),\label{tournamentextravertexcritical}
\item a regular, non-transitive tournament on 5 vertices, ($k=4$), or \label{tournamentcritical}
\item a directed cycle, ($k=n$). \label{directedcyclecritical}
\end{enumerate}
\end{corollary}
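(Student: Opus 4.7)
The plan is to reduce this corollary directly to Theorem \ref{thm:bigcharacterization} by invoking the equivalence recorded in the Observation at the start of Section \ref{sec:comparing}. That observation tells us that $\F(D)<\Z(D)$ holds if and only if $W\subseteq V(D)$ is a critical set exactly when $|W|\geq n-\F(D)$. Consequently, the digraphs for which there exists some $k$ with the property ``$W$ is critical iff $|W|\geq k$'' are exactly the digraphs enumerated in Theorem \ref{thm:bigcharacterization}, and the required integer must be $k=n-\F(D)$.

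With that reduction in hand, the first step is to match the six items of the corollary bijectively with the six items of Theorem \ref{thm:bigcharacterization}, noting that item \ref{characterizationremovecycles} of the theorem is repackaged verbatim as item 3 of the corollary and item \ref{characterizationsinkcomplement} as item \ref{tournamentextravertexcritical}. The second step is to read off $\F(D)$, hence $k$, in each case. For $\overline{K_n}$ every proper subset is a FZFS, so $\F(D)=n-1$ and $k=1$. For $K_j\overrightarrow{\vee}\overline{K_\ell}$ with $j\geq 2$, the reverse direction of Theorem \ref{thm:bigcharacterization} already exhibits any $(n-2)$-set $V\setminus\{u,v\}$ as stalled (the two removed vertices share the same in-neighborhood inside $V\setminus\{u,v\}$), giving $\F(D)=n-2$ and $k=2$. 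The cases obtained from $K_n$ or from $K_{n-1}\overrightarrow{\vee}\{v\}$ by deleting a spanning (or near-spanning) union of long directed cycles are handled identically by the same argument in Theorem \ref{thm:bigcharacterization}: every $(n-3)$-set is a FZFS while every $(n-2)$-set is a ZFS, so $\F(D)=n-3$ and $k=3$. The regular tournament on $5$ vertices has $\Z(D)=2$ and $\F(D)=1$, hence $k=4$; and a directed cycle on $n$ vertices has $\F(D)=0$ by Theorem \ref{thm:f0}, hence $k=n$.

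The main (and essentially only) obstacle is the bookkeeping: one must make sure that for each family in Theorem \ref{thm:bigcharacterization} the corresponding $\F(D)$ has been computed, not just bounded. Fortunately, the reverse-direction arguments in the proof of Theorem \ref{thm:bigcharacterization} already supply both a FZFS of the required size and a proof that every larger set is a ZFS, so the values $k=n-\F(D)$ listed in the corollary follow without further work. The only subtlety worth a one-line remark is in item \ref{tournamentcritical}: the qualifier ``non-transitive'' is automatic, since a transitive tournament on $n\geq 2$ vertices has vertices of distinct out-degrees $0,1,\ldots,n-1$ and therefore cannot be regular.
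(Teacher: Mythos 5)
Your proposal is correct and follows exactly the route the paper intends: the corollary is stated without proof as an immediate consequence of Theorem \ref{thm:bigcharacterization} combined with the Observation that $\F(D)<\Z(D)$ is equivalent to ``$W$ is critical iff $|W|\geq n-\F(D)$,'' and your case-by-case computation of $k=n-\F(D)$ matches the values already established in the reverse direction of that theorem's proof. Your remark that ``non-transitive'' is redundant for a regular tournament is also correct.
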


\section{Select digraphs}
\label{sec:selectedgraphs}
For a digraph consisting of two or more components, we can determine the failed zero forcing number in terms of the failed zero forcing numbers and orders of the components.   The result is similar to the result for undirected graphs in  \cite{fetcie2014failed}.  

\begin{theorem}
Let $D$ be a digraph that consists of $k$ components where $k \geq 1$, and let $D_i$ denote the $i^{th}$ component of $D$, $1 \leq i \leq k$.  Then 
$$\F(D)=\max_{1 \leq j \leq k}\left(  \F(D_j) +  \sum_{i=1, i\neq j}^{k} |V(D_i)| \right).$$
\end{theorem}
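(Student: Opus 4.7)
The plan is to prove the two inequalities $\F(D) \geq \max_{j}\left( \F(D_j) + \sum_{i \neq j} |V(D_i)| \right)$ and $\F(D) \leq \max_{j}\left( \F(D_j) + \sum_{i \neq j} |V(D_i)| \right)$ separately, leveraging the key observation that because $D$ is disconnected, the color change rule applied to an initial set $S \subseteq V(D)$ acts independently on each component: that is, $B^t(S) \cap V(D_i) = B^t(S \cap V(D_i))$, where the right-hand side is computed inside $D_i$. This is immediate from the fact that $N_D^+(v) \subseteq V(D_i)$ for each $v \in V(D_i)$.

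For the lower bound, I would fix an index $j$ that achieves the maximum, pick a maximum FZFS $S_j$ of $D_j$ (so $|S_j| = \F(D_j)$), and construct
\[
S \;=\; S_j \;\cup\; \bigcup_{i \neq j} V(D_i).
\]
Since the process on $D$ restricts to each component, $S \cap V(D_i) = V(D_i)$ stays filled trivially for $i \neq j$, while $B^t(S) \cap V(D_j) = B^t(S_j)$ inside $D_j$ never fills all of $V(D_j)$. Hence $S$ is a FZFS of $D$ of cardinality $\F(D_j) + \sum_{i \neq j} |V(D_i)|$, giving the lower bound.

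For the upper bound, I would take any FZFS $S$ of $D$. Because $S$ fails to force, there must exist some vertex that is never filled; let $D_j$ be the component containing such a vertex. By the independence of components noted above, $S \cap V(D_j)$ must itself be a FZFS of $D_j$, so $|S \cap V(D_j)| \leq \F(D_j)$. In every other component we have the trivial bound $|S \cap V(D_i)| \leq |V(D_i)|$. Summing and then maximizing over the choice of the offending component $j$ yields
\[
|S| \;=\; \sum_{i=1}^{k} |S \cap V(D_i)| \;\leq\; \F(D_j) + \sum_{i \neq j} |V(D_i)| \;\leq\; \max_{1 \leq j' \leq k}\!\left( \F(D_{j'}) + \sum_{i \neq j'} |V(D_i)| \right).
\]

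The only genuine content here is the observation that zero forcing on a disconnected digraph decomposes into independent processes on the components, since no arc crosses between components; once that is stated, both directions are routine. The statement for $k=1$ is vacuous. I do not anticipate a substantial obstacle, though some care is warranted in the upper bound to justify picking exactly one component where the process stalls rather than several — this is handled by simply fixing any one component that contains a vertex never filled by the process starting at $S$.
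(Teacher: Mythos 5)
Your proof is correct and follows essentially the same route as the paper: the lower bound via filling all components except one and placing a maximum FZFS in the remaining component, and the upper bound via the component-wise decomposition of the forcing process (the paper phrases this direction contrapositively, showing any set larger than the claimed maximum restricts to a ZFS in every component, but the content is identical). No issues.
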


\begin{proof}
If $k=1$, the result is trivial. Otherwise, for any FZFS $S_i$ of any component $D_i$, $S=\left(V(D) \backslash V(D_i)\right) \cup S_i$  is a FZFS.

 If $S' \subseteq V$ with $|S'| >\max_{1 \leq j \leq k}\left(  \F(D_j) +  \sum_{i=1, i\neq j}^{k} |V(D_i)| \right)$, then for any $\ell$, $|S' \cap V(D_{\ell})| > F(D_{\ell})$, so $S' \cap V(D_{\ell})$ is a ZFS of $D_{\ell}$, and consequently $S'$ is a ZFS of $D$.  
\end{proof}

A \emph{directed acyclic graph} is a digraph that contains no directed cycles.  The following proposition follows directly from Corollary \ref{cor:source}, since every directed acyclic graph has a source. 
\begin{prop}
For any directed acyclic graph $D$, $\F(D) = n-1$. \label{prop:dag}
\end{prop}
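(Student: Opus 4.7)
The plan is to invoke Corollary \ref{cor:source} directly, which tells us that $\F(D)=n-1$ if and only if $D$ has a source (a vertex with in-degree zero). So the entire proof reduces to the standard fact that every directed acyclic graph contains at least one source.

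To justify that fact, I would argue by contradiction. Suppose $D$ is a DAG with $\deg^-(v)\geq 1$ for every $v\in V(D)$. Pick any $v_1 \in V(D)$ and, using the positive in-degree assumption, choose $v_2 \in N_D^-(v_1)$, then $v_3 \in N_D^-(v_2)$, and so on, producing an infinite sequence $v_1,v_2,v_3,\ldots$ of vertices such that $v_{i+1}v_i \in A(D)$ for all $i$. Since $V(D)$ is finite, the pigeonhole principle forces some repetition $v_i=v_j$ with $i<j$, and then $v_j v_{j-1} \cdots v_{i+1} v_i$ traces out a directed cycle in $D$, contradicting that $D$ is acyclic. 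Hence $D$ must contain at least one source.

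Having established that $D$ has a source, Corollary \ref{cor:source} immediately yields $\F(D)=n-1$, completing the proof. There is no real obstacle here; the proposition is essentially a repackaging of the $n-1$ case of Corollary \ref{cor:source} using the elementary structural property that DAGs always contain sources.
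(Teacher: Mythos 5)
Your proposal is correct and matches the paper's argument exactly: the paper also derives this proposition directly from Corollary \ref{cor:source} together with the fact that every directed acyclic graph has a source. Your explicit justification of that fact (following in-neighbors until pigeonhole forces a directed cycle) is a standard argument that the paper simply takes for granted.
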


We turn our attention to special cases of directed trees, starting with oriented trees.   For any vertex $v$ in a directed tree, if $\left| N^+(v) \cup N^-(v) \right| =1$, then we say that $v$ is a \emph{leaf}.  The following corollary follows immediately from Proposition \ref{prop:dag}, since every oriented tree is a directed acyclic graph.

\begin{corollary}
For any oriented tree $\og{T}$,  $\F(\overrightarrow{T})= n-1$. \label{cor:orientedtree}
\end{corollary}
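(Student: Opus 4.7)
The plan is to derive the corollary as a direct consequence of Proposition \ref{prop:dag}, by showing that every oriented tree is a directed acyclic graph. To this end, let $\og{T}$ be an oriented tree on $n$ vertices, so that its underlying graph $UG(\og{T})$ is a tree in the usual undirected sense.

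First, I would argue that $\og{T}$ contains no directed cycle. Since $UG(\og{T})$ is a tree, it contains no cycle of any length (including cycles of length 2, which are forbidden anyway by the definition of oriented graph). Any directed cycle $v_1, v_2, \ldots, v_k, v_1$ in $\og{T}$ with $k \geq 2$ would project, via the operation of replacing each arc with an undirected edge, to a closed walk in $UG(\og{T})$ that traverses a cycle of length $k$; this is impossible in a tree. Hence $\og{T}$ is a directed acyclic graph.

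Second, I would invoke Proposition \ref{prop:dag} directly: since every directed acyclic graph $D$ satisfies $\F(D) = n-1$, and $\og{T}$ is such a digraph, we obtain $\F(\og{T}) = n-1$, completing the proof.

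There is no real obstacle here; the entire argument reduces to the observation that an orientation of an acyclic undirected graph cannot contain a directed cycle, which in turn triggers the previously established proposition. The only minor point requiring care is ruling out 2-cycles, but this is immediate from the definition of an oriented graph given in the preliminaries.
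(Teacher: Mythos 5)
Your proof is correct and takes exactly the same route as the paper, which also derives the corollary immediately from Proposition \ref{prop:dag} by noting that every oriented tree is a directed acyclic graph. The extra care you take with $2$-cycles (handled by the definition of an oriented graph) is a reasonable detail the paper leaves implicit.
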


The (undirected) graph $K_{1,t}$ has a single vertex adjacent to $t=n-1$ other vertices, and no other edges.  

\begin{theorem}
If $UG(D) = K_{1,t}$ for any $t \geq 1$, then
$$\F(D)=\begin{cases}
t,  & \text{ if } D \text{ is oriented, or if at least one leaf has in-degree 0.}  \\
t-1,  & \text{ otherwise. }\\
\end{cases}
$$
\end{theorem}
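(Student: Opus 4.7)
The strategy is to apply Corollary~\ref{cor:source}, which characterizes $\F(D) = n-1$ by the existence of a source in $D$; since $n = t+1$ for a star, this already pins down exactly when $\F(D) = t$. If $D$ is oriented, then Corollary~\ref{cor:orientedtree} immediately gives $\F(D) = n-1 = t$. If at least one leaf $v_i$ has in-degree $0$, then $v_i$ is a source, so Corollary~\ref{cor:source} again gives $\F(D) = t$. This handles both cases where $\F(D)=t$.

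For the remaining case, $D$ is not oriented and every leaf has in-degree at least $1$. Since $c$ is the only vertex adjacent to any leaf, the second condition forces $cv_i \in A$ for every $i$; since $D$ is not oriented, some edge $\{c, v_j\}$ carries both arcs, so $v_j c \in A$ and hence $c$ also has in-degree at least $1$. Thus $D$ has no source, and Corollary~\ref{cor:source} yields the upper bound $\F(D) \leq n-2 = t-1$.

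It remains to show $\F(D) \geq t-1$ by exhibiting a stalled FZFS of size $t-1$. For $t \geq 2$, I would take $S = V(D) \setminus \{v_1, v_2\}$ for any two distinct leaves $v_1, v_2$: the center $c \in S$ has both $v_1$ and $v_2$ as empty out-neighbors and so cannot force (it has two, not exactly one, empty out-neighbor), while every filled leaf's only possible out-neighbor is $c \in S$. Hence no color change occurs and $S$ is a FZFS of size $t-1$. For $t=1$, the empty set has size $0 = t-1$ and is vacuously stalled, completing the argument.

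The only real subtlety is verifying that $D$ truly has no source in the last case, which requires carefully combining the two negated hypotheses (not oriented, and every leaf has positive in-degree) to rule out sources at both the leaves and the center. Once that is in hand, the upper bound is automatic from Corollary~\ref{cor:source} and the lower bound is a direct check against the color change rule on the star.
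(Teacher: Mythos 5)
Your proof is correct and follows essentially the same route as the paper: the $\F(D)=t$ cases via Corollaries \ref{cor:orientedtree} and \ref{cor:source}, the upper bound $t-1$ by showing $D$ has no source, and the lower bound by exhibiting two leaves whose complement is stalled (the paper phrases this as the two leaves forming a critical set, which is equivalent). Your explicit verification that the center has positive in-degree is a slightly more careful rendering of a step the paper states tersely, but the argument is the same.
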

\begin{proof}
If $D$ is oriented, then $\F(D)=t$ by Corollary \ref{cor:orientedtree}.  If $\deg^-(v)=0$ for a leaf $v$, then $v$ is a source, and by Corollary \ref{cor:source}, $\F(D)=t$.

Otherwise, there exist $u, w \in V(D)$ such that $uw, wu \in A(D)$, and $\deg^-(v)=1$ for every leaf $v \in V(D)$.  Thus, $D$ has no source, giving us that $\F(D) \leq t-1$.  If $UG(D)=K_2$, $D$ is a 2-cycle, and $\F(D)=0$, so we are done.  Let $u$ be the non-leaf vertex in $V$, and let $v, w \in V \backslash \{u\}$.  Then $\{v, w\}$ is a critical set because $u$ is the unique in-neighbor of both.  By Proposition \ref{prop:charnminusk}, $\F(D) = t-1$.  
\end{proof}

\subsection{Weak paths}
 
To establish $\F(D)$ if $UG(D) = P_n$, we assume that the vertices of $D$ are labeled in order from one end-vertex to the other: $v_1, v_2, \ldots, v_n$.

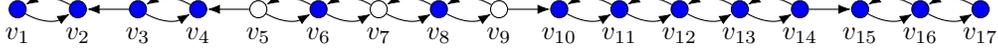
\begin{figure}[h!]
\begin{center}
\begin{tikzpicture}[auto]
\tikzstyle{vertex}=[draw, circle, inner sep=0.8mm]
\tikzset{edge/.style = {->,> = Latex}}
\tikzset{curvededge/.style = {->,> = Latex, bend right}}
\foreach \x in {1, 2, 3, 4, 6, 8, 10, 11, ..., 17}{
\node (v\x) at (0.8*\x,0)[vertex, fill=blue, label= below:$v_{\x}$]{};}
\foreach \x in {5, 7,9}{
\node (v\x) at (0.8*\x,0)[vertex,  label= below:$v_{\x}$]{};}
\draw[edge](v3)to(v2);
\draw[edge](v5)to(v4);
\draw[edge](v9)to(v10);
\draw[edge](v14)to(v15);
\foreach \x  [evaluate=\x as \y using {int(\x+1)}]  in {1, 3, 5, 6, ...,8, 10, 11, 12, 13, 15, 16}{
\draw[curvededge](v\x)to(v\y);
\draw[curvededge](v\y)to(v\x);}
\end{tikzpicture}
\caption{A weak path with maximum FZFS shown in blue.} \label{fig:path}
\end{center}
\end{figure}

\begin{theorem}
Suppose $D$ is a weak path.  Let $V_{-} =\{v_1\} \cup \{ v_k : v_kv_{k-1} \in A \mbox{ and } v_{k-1} v_k \notin A\}$, and  $V_{+} =\{v_n\} \cup \{ v_k : v_k v_{k+1} \in A \mbox{ and } v_{k+1} v_k \notin A\}$.    Let $\ell = \min\{i -j : v_j \in V_-, v_i \in V_+, i-j \geq 0\}$.  Then 
$$\F(D)= n - 1 - \left\lceil\frac{\ell}{2}\right\rceil.$$ \label{thm:path}
\end{theorem}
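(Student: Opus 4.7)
The plan is to invoke Proposition \ref{prop:charnminusk} and show instead that the smallest critical set in $D$ has cardinality exactly $1 + \lceil \ell/2 \rceil$, so that $\F(D) = n - (1 + \lceil \ell/2 \rceil) = n - 1 - \lceil \ell/2 \rceil$. I will prove this by a matching construction and lower bound.

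For the construction, fix a pair $(v_j, v_i)$ with $v_j \in V_-$, $v_i \in V_+$, and $i - j = \ell$. When $\ell = 0$ we have $v_j = v_i \in V_- \cap V_+$, so $v_j$ is a source and $\{v_j\}$ is trivially critical. When $\ell \geq 1$, the minimality of $\ell$ forces $v_j \notin V_+$, $v_i \notin V_-$, and no intermediate vertex $v_k$ with $j < k < i$ lies in $V_- \cup V_+$, since otherwise one could pair it with $v_j$ or $v_i$ to obtain a smaller difference. Translating these exclusions into edge types, every edge $\{v_k, v_{k+1}\}$ with $j \leq k < i$ carries arcs in both directions; the edge $\{v_{j-1}, v_j\}$ (if present) is oriented only from $v_j$ to $v_{j-1}$; and $\{v_i, v_{i+1}\}$ (if present) is oriented only from $v_i$ to $v_{i+1}$. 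Let $W$ be the alternating set $\{v_j, v_{j+2}, v_{j+4}, \ldots\}$ up to and including $v_i$ (adjoining $v_i$ explicitly when $\ell$ is odd). Then $|W| = 1 + \lceil \ell/2 \rceil$. Any skipped vertex inside $[j, i]$ has both of its path-neighbors in $W$, contributing $|N^+ \cap W| = 2$; the neighbors $v_{j-1}$ and $v_{i+1}$ cannot point into $W$ by the edge orientations just noted; and every other vertex is too far from $W$ to have any out-neighbor in $W$. Hence $W$ is critical.

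For the lower bound, let $W$ be any critical set with leftmost vertex $v_a$ and rightmost $v_b$. First I show $v_a \in V_-$: if $a > 1$, then $v_{a-1} \notin W$ satisfies $N^+(v_{a-1}) \cap W \subseteq \{v_a\}$ (since $v_{a-2} \notin W$), and criticality forces $|N^+(v_{a-1}) \cap W| \neq 1$, meaning $v_{a-1} v_a \notin A$, i.e., $v_a \in V_-$. The case $a = 1$ gives $v_a \in V_-$ by definition. Symmetrically, $v_b \in V_+$. More generally, for any consecutive pair $v_{k_t}, v_{k_{t+1}}$ in $W$ with $k_{t+1} - k_t \geq 3$ (a \emph{large gap}), the same criticality argument applied to the gap-endpoints $v_{k_t + 1}$ and $v_{k_{t+1} - 1}$ yields $v_{k_t} \in V_+$ and $v_{k_{t+1}} \in V_-$. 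Partition $W$ into maximal blocks separated by large gaps; by the previous observations, each block has leftmost vertex in $V_-$ and rightmost vertex in $V_+$, so its span is at least $\ell$. Within a block the gaps are at most $2$, so a block of span $s$ with $q+1$ vertices satisfies $s \leq 2q$, giving $q+1 \geq \lceil s/2 \rceil + 1 \geq \lceil \ell/2 \rceil + 1$. Summing over all blocks yields $|W| \geq 1 + \lceil \ell/2 \rceil$, with the multi-block case only strengthening the inequality.

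I expect the main obstacle to be the edge-type bookkeeping: in both the construction and the large-gap analysis, one must repeatedly convert membership in $V_-$ or $V_+$ into specific arc conditions, and it is important that these translations are tight enough that the minimality of $\ell$ and the criticality of $W$ combine cleanly to produce the $\lceil \ell/2 \rceil$ lower bound on each block, rather than a weaker linear bound.
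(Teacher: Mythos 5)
Your proof is correct, and its first half is the paper's argument in dual form: your alternating set $W$ between the closest $V_-$/$V_+$ pair is exactly the complement of the stalled set $S$ the paper constructs, and passing through Proposition \ref{prop:charnminusk} rather than arguing directly about FZFS is just the critical-set/stalled-set duality the paper itself sets up. Where you genuinely diverge is the optimality half. The paper argues by contradiction about a single hypothetical larger stalled set $S'$, locating one ``bad'' pair $v_s \in V_+$, $v_t \in V_-$ and deriving a forced color change; the argument is local to that one pair and leans on a somewhat informal counting claim (``since $|S'|>|S|$, there is at most one such pair''). You instead take an arbitrary critical set $W$, show its extreme vertices lie in $V_-$ and $V_+$ respectively, show that every large gap (difference $\geq 3$) in $W$ manufactures a new $V_+$/$V_-$ pair at its endpoints, and then decompose $W$ into blocks each of which must span at least $\ell$ with internal gaps at most $2$, giving $\lceil \ell/2\rceil + 1$ vertices per block. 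This buys a uniform lower bound on \emph{every} critical set (not just a minimum one), handles the multi-block case explicitly, and makes the role of the minimality of $\ell$ transparent; the cost is the edge-type bookkeeping you flagged, which you do carry out correctly (the key point being that in a weak path every consecutive pair is joined by at least one arc, so ``not in $V_+$'' plus ``not in $V_-$'' forces a bidirected edge).
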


\begin{proof}
Let $S$ be defined as follows, where $i = i^*$ and $j=j^*$ are the indices achieving $\min\{i -j : v_j \in V_-, v_i \in V_+, i-j \geq 0\}$.
$$ S = 
\begin{cases}
V \backslash \{v_{j^*}, v_{j^*+2}, v_{j^*+4}, \ldots, v_{j^*+2k}\} & \mbox{ if } \ell= 2k \\
V \backslash \{v_{j^*}, v_{j^*+2}, v_{j^*+4}, \ldots, v_{j^*+2k}\} \cup \{v_{i^*}\} & \mbox{ if } \ell = 2k+1. \\
\end{cases}$$
 An example with $n=17$, $i^*=9$ and $j^*=5$ is shown in Figure \ref{fig:path}.

To show that $S$ is a FZFS, let $v_{s} \in S$.   If $s>i^*$ or $s<j^*$, then $N^+(v_{s})\subseteq S$.  If $j^* < s < i^*$, then $N^+(v_{s}) = \{ v_{s-1}, v_{s+1} \}$ (otherwise the minimum assumption is violated).  By construction of $S$, $v_{s-1}, v_{s+1} \notin S$.  Thus $B^1(S) = S$ and $S$ is a FZFS.

We show that $S$ is a maximum FZFS.  If $|S| = n-1$, then we are done.  Note that this includes any case with a source, so we can assume that $\deg^-(v) \geq 1$ for all $v \in V$.    Suppose there exists $S' \subseteq V$ with $|S| < |S'| < n$, and $S'$ is stalled.  For any closest pair of $u \in V_+$ and $w \in V_-$, if a pair of adjacent vertices between them is in $S'$, then all vertices from $u$ to $w$ are in $S'$, since $S'$ is stalled.  If this is true for all such pairs  $u$ and $w$ then $S'=V$, so  there must exist a pair $v_{s} \in V_+$ and $v_{t} \in V_-$ for which this is not the case.  Note since $|S'|>|S|$, there is at most one such pair and that $v_{s}, v_{t} \in S'$.  Also since $|S'|>|S|$, if $s>1$ then $v_{s-1} \in S'$, and if $t<n$ then $v_{t+1} \in S'$.  That is, $N^+[v_s] \backslash S' =\{v_{s+1}\}$, which is a contradiction: either $S'$ is not stalled, or two adjacent vertices between $v_s$ and $v_{t}$ are in $S'$.  Thus, $S$ is a maximum FZFS.   
\end{proof}

In many cases, the formula from Theorem \ref{thm:path} can be simplified.  If a weak path $D$ contains a source including if $D$ is an oriented path, for example, then there exists at least one vertex in $V_+\cap V_-$, giving us $i^*=j^*$, and consequently $\F(D) = n-1$.   By setting  $i^*=n$ and $j^*=1$ in Theorem \ref{thm:path}, we have the following corollary, established for undirected graphs in \cite{fetcie2014failed}.
\begin{corollary}
For any undirected path $P_n$ (or a weak path with $\{v_i v_{i+1}, v_{i+1} v_i\} \subseteq A$ for $1 \leq i \leq n-1$), $\F(P_n) = \left\lceil  \frac{n-2}{2}\right\rceil$.  

\end{corollary}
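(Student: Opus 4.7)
The plan is to apply Theorem \ref{thm:path} directly, since the hypothesis fixes which arcs are present: for every $1 \leq i \leq n-1$ both $v_i v_{i+1}$ and $v_{i+1} v_i$ lie in $A$. First I would compute $V_-$ and $V_+$ under this hypothesis. By definition $V_- = \{v_1\} \cup \{v_k : v_k v_{k-1} \in A \text{ and } v_{k-1} v_k \notin A\}$, but the second set is empty because the reverse arc is always present, so $V_- = \{v_1\}$. The symmetric argument gives $V_+ = \{v_n\}$.

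Next I would evaluate $\ell = \min\{i-j : v_j \in V_-,\ v_i \in V_+,\ i-j \geq 0\}$. Since the only choice of indices is $j=1$ and $i=n$, we get $\ell = n-1$. Plugging into Theorem \ref{thm:path} yields
\begin{equation*}
\F(D) = n - 1 - \left\lceil \frac{n-1}{2} \right\rceil.
\end{equation*}

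All that remains is the parity check to identify this with $\lceil (n-2)/2 \rceil$. I would split into two cases. If $n$ is odd, write $n-1 = 2k$, so the ceiling equals $k$ and $\F(D) = k = \lceil(n-2)/2\rceil$ since $n - 2 = 2k - 1$. If $n$ is even, write $n - 1 = 2k+1$, so the ceiling equals $k+1$ and $\F(D) = k = \lceil(n-2)/2\rceil$ since $n-2 = 2k$. In both cases the claimed formula holds. There is no real obstacle here beyond bookkeeping, since Theorem \ref{thm:path} has already done all the combinatorial work; the only thing to watch is the empty-set logic giving $V_\pm = \{v_1\}, \{v_n\}$, and the ceiling identity at the end.
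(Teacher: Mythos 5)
Your proposal is correct and matches the paper's own derivation, which likewise obtains the result by setting $i^*=n$ and $j^*=1$ (equivalently $V_-=\{v_1\}$, $V_+=\{v_n\}$, $\ell=n-1$) in Theorem \ref{thm:path}; your parity bookkeeping verifying $n-1-\lceil (n-1)/2\rceil = \lceil (n-2)/2\rceil$ is also right.
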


\subsection{Weak cycles}

We now turn to weak cycles, starting with oriented cycles.

\begin{obs}
Any oriented cycle $\og{C_n}$ that is not a directed cycle has a source. 
\end{obs}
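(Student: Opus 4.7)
The plan is to use a short degree-counting argument. First I would observe that since $\og{C_n}$ has underlying graph $C_n$, every vertex satisfies $\deg^-(v) + \deg^+(v) = 2$. Because $\og{C_n}$ is oriented (no 2-cycles), each vertex then falls into exactly one of three types: a source with $(\deg^-(v), \deg^+(v)) = (0,2)$, a sink with $(\deg^-(v), \deg^+(v)) = (2,0)$, or a ``balanced'' vertex with $\deg^-(v) = \deg^+(v) = 1$.

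Next, let $s$, $t$, and $m$ denote the number of sources, sinks, and balanced vertices of $\og{C_n}$ respectively, so $s + t + m = n$. The standard identity $\sum_{v \in V} \deg^+(v) = \sum_{v \in V} \deg^-(v) = |A(\og{C_n})| = n$ then gives $2s + m = n$ and $2t + m = n$, from which I conclude $s = t$. So the numbers of sources and sinks in any oriented cycle coincide.

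Finally, I would argue that if $m = n$ (every vertex is balanced), then starting at any vertex and repeatedly following the unique out-arc produces a directed walk that, by weak connectedness and the balanced-degree condition, must close up into a directed cycle visiting all $n$ vertices; hence $\og{C_n}$ would itself be a directed cycle. So if $\og{C_n}$ is not a directed cycle, then $m < n$, forcing $s = t \geq 1$, and in particular $\og{C_n}$ has a source.

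The only subtle step is the last one: one must verify that an oriented cycle with every vertex balanced is genuinely a directed cycle and not, for example, a configuration that decomposes strangely. This is handled by weak connectedness of $\og{C_n}$, which guarantees the out-arc walk cannot fragment, so the verification is routine.
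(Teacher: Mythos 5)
Your argument is correct: the degree count $2s+m=n=2t+m$ giving $s=t$, together with the fact that an oriented cycle in which every vertex is balanced must be a directed cycle, does establish the claim. The paper states this as an observation without proof (the intuitive version being that any change of arc direction as one traverses the underlying cycle creates a source or a sink, and these alternate), and your counting argument is a clean, complete justification of the same fact.
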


As we know from Corollary \ref{cor:source}, 
if $D$ has a source then $\F(D)=n-1$.  Combining with Theorem \ref{thm:f0} completes the proof of the following theorem.  
\begin{theorem}
An oriented cycle $\overrightarrow{C_n}$  has
$$\F(\overrightarrow{C_n}) =
\begin{cases}
0, \mbox{ if it is a directed cycle } \\
n-1, \mbox{ otherwise } \label{thm:orientedcycle}
\end{cases}$$
\end{theorem}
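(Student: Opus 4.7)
The plan is to prove the theorem via a short two-case analysis, using three earlier results that together do essentially all of the work: Theorem \ref{thm:f0} (which characterizes $\F(D) = 0$ as holding exactly when $D$ is a directed cycle), Corollary \ref{cor:source} (which characterizes $\F(D) = n-1$ as holding exactly when $D$ has a source), and the observation stated immediately before the theorem (that any oriented cycle that is not a directed cycle has a source).

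The first case will be when $\overrightarrow{C_n}$ is itself a directed cycle; applying Theorem \ref{thm:f0} immediately gives $\F(\overrightarrow{C_n}) = 0$, matching the first branch of the piecewise formula. The second case will be when $\overrightarrow{C_n}$ is an oriented cycle that is not a directed cycle; the cited observation then supplies a source in $\overrightarrow{C_n}$, and Corollary \ref{cor:source} yields $\F(\overrightarrow{C_n}) = n-1$, matching the second branch. These two cases are exhaustive, so the formula follows.

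There is no genuine obstacle, since all substantive work has been done in the cited results. The only step worth elaborating is the preceding observation itself, which follows from a one-line degree count: in any orientation of $C_n$ we have $\sum_v \deg^-(v) = \sum_v \deg^+(v) = n$, so if every vertex had positive in-degree each would have in-degree exactly $1$, and the same argument gives out-degree exactly $1$ everywhere, forcing $\overrightarrow{C_n}$ to be a directed cycle. Taking the contrapositive, any oriented cycle that is not a directed cycle must contain a vertex of in-degree $0$, i.e., a source, which closes the argument.
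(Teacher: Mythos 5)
Your proposal is correct and follows the same route as the paper, which likewise derives the result by combining Theorem \ref{thm:f0}, Corollary \ref{cor:source}, and the observation that a non-directed oriented cycle has a source. Your added degree-count justification of that observation is sound (once every in-degree is forced to equal $1$, every out-degree equals $1$ because each vertex has total degree $2$), and merely fills in a step the paper states without proof.
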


Finally, we turn to weak cycles in general.  We present the failed zero forcing numbers of weak cycles depending on the orientations of the edges.  Given a weak cycle $D$, pick any vertex and label the vertices in order around the cycle, so $V=\{v_0, v_1, \ldots v_{n-1}\}$.   Let 
$$V_{-} = \{ i : v_{i+1} v_{i} \in A \mbox{ and } v_{i} v_{i+1} \notin A\},$$
$$V_{+} =\{ j : v_j v_{j+1} \in A \mbox{ and } v_{j+1} v_j \notin A\}, \mbox{and}$$ 
$$V_{0} = \{ k : v_k v_{k+1} \in A \mbox{ and } v_{k} v_{k+1} \in A\}$$
where we assume addition is modulo $n$, so for example if $v_i = v_{n-1}$, then $v_{i+1}=v_0$.

We define a \emph{run on} $k$  \emph{vertices} to be a consecutive sequence of vertices along the cycle all from the same set: $V_-, V_+$, or $V_0$.  We say that the run is \emph{maximal} if no vertex can be added to the run without violating the definition.

\begin{theorem}
Suppose $V_- $ or $V_+= \emptyset$ in a weak cycle $D$.    Let $\ell$  be the number of maximal runs of vertices in $V_0$.  Let $n_i$ denote the number of vertices in the $i^{th}$ maximal run of  vertices from  $V_0$, in order around the cycle.  Then 

$$\F(D) = \sum_{i=1}^{\ell} \left\lceil \frac{n_i}{2} \right\rceil$$ \label{thm:cycleonedirection}
\end{theorem}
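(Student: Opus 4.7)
My plan is to prove the theorem in two halves after assuming without loss of generality that $V_+ = \emptyset$; the other case follows by symmetry on reversing the cyclic labelling. Under this assumption with $V_- \neq \emptyset$, each maximal $V_0$-run spans $n_i+1$ consecutive cycle vertices $v_c, v_{c+1}, \ldots, v_{c+n_i}$ and forms a path-like bidirectional segment, flanked on both sides by $V_-$-edges. The key structural observation concerns out-degrees: the right endpoint $v_{c+n_i}$ of each $V_0$-run has a unique out-neighbour $v_{c+n_i-1}$ (the adjoining $V_-$-edge points into $v_{c+n_i}$), and likewise each interior vertex of a $V_-$-run has a unique out-neighbour equal to its cyclic predecessor, while all other vertices have exactly two out-neighbours, namely their two cycle-neighbours.

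For the lower bound $\F(D) \geq \sum_{i=1}^{\ell} \lceil n_i/2 \rceil$, I will build $S$ by placing into it, from each $V_0$-run, the vertices $v_c, v_{c+2}, v_{c+4}, \ldots, v_{c+2\lfloor(n_i-1)/2\rfloor}$. This contributes $\lceil n_i/2\rceil$ vertices per run, explicitly omits the right endpoint $v_{c+n_i}$, and places no $V_-$-vertex into $S$. Each $v \in S$ has both of its cycle-neighbours outside $S$: the neighbour inside the same $V_0$-run sits at an unchosen odd-parity position, and the neighbour in the adjoining $V_-$-run (when applicable) is among the $V_-$-vertices we excluded. Thus no color change occurs, $S$ is stalled, and since $V_-$ is nonempty $S \neq V$, so $S$ is a valid FZFS of the claimed size.

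For the upper bound, I will first show that any stalled $S$ which contains either the right endpoint of some $V_0$-run or any vertex interior to a $V_-$-run must in fact equal $V$. Such a vertex has a unique out-neighbour which the stalled condition forces into $S$; iterating, the cyclic predecessor chain drags vertex after vertex into $S$, and at any vertex with two out-neighbours the stalled ``both-or-neither'' condition, combined with the already-deduced membership of the cyclic successor, forces the cyclic predecessor into $S$ as well. Since $V_-$ is nonempty the chain truly wraps around the whole cycle, yielding $S = V$ and contradicting that $S$ is a proper FZFS. Given this, within each $V_0$-run $S$ is confined to the first $n_i$ vertices $v_c, \ldots, v_{c+n_i-1}$; moreover an analogous cascade starting from any two adjacent vertices both in $S$ propagates backwards through the run to force $v_{c-1}$ into $S$, contradicting the exclusion just proved. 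Hence $S$ restricted to each run is an independent set in a path on $n_i$ vertices, of size at most $\lceil n_i/2\rceil$, and summing over runs gives the bound.

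The hardest step is making the cascade argument fully rigorous: one must track how the rigid single-out-neighbour stalled condition chains together with the symmetric two-out-neighbour condition so that propagation continues backwards all the way around the cycle. I plan to formalise this with induction on the number of steps along the cyclic predecessor sequence, using the membership of the current vertex to drive the inductive step for the next. A brief separate treatment is also needed for the boundary case $V_- = V_+ = \emptyset$ (a fully bidirectional cycle), where the propagation argument does not directly apply and the max-independent-set analysis must be invoked by hand.
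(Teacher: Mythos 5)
Your proposal is correct and follows essentially the same route as the paper's proof: the same alternating construction inside each $V_0$-run (mirrored, since you assume $V_+=\emptyset$ where the paper assumes $V_-=\emptyset$), and the same cascade argument showing that a stalled proper subset must avoid the out-degree-one vertices and be independent within each run, whence the $\left\lceil n_i/2\right\rceil$ bound per run. Your treatment of the cascade through out-degree-two vertices is in fact slightly more careful than the paper's, and you are right to flag the fully bidirectional case $V_-=V_+=\emptyset$, which the paper's proof silently omits (and where, for odd $n$, the maximum stalled proper subset is an independent set of size $\left\lfloor n/2\right\rfloor$ rather than $\left\lceil n/2\right\rceil$).
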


\begin{proof}
Suppose $V_- = \emptyset$ (without loss of generality).  If $V_0 = \emptyset$ then $D$ is a directed cycle, so $\F(D)=0$ by Theorem \ref{thm:orientedcycle}.  Assume that $V_0 \neq \emptyset$.  

We define $S \subseteq V$ as follows.  The first run has $n_1$ vertices.  We can assume that the vertex labels begin with the first run, so the first run vertices are $v_0, v_1, \ldots, v_{n_1-1}$.  Add $v_1, v_3, \ldots$ up to  $v_{n_1-1}$  or $v_{n_1}$ (whichever is odd) to $S$.  This gives us $\lceil \frac{n_1}{2} \rceil$ vertices.  We do this for each maximal run of vertices from $V_0$, giving us $|S| =    \sum_{i=1}^{\ell} \left\lceil \frac{n_i}{2} \right\rceil.$

We show that $S$ is a maximum FZFS.  Since $\deg^+(v) =2$ for every $v \in S$ by construction, and since $N^+(S) \subseteq V \backslash S$, $S$ is stalled and therefore a FZFS. Now, suppose $S' \subseteq V$ with $|S'| > |S|$, and $S'$ is stalled.  Then either there must be some $v \in V$ that has $\deg^+(v) = 1$ with $v \in S'$, or there exists an $i$th run with more than $\lceil\frac{n_i}{2}\rceil$ vertices in $S'$.  In the first case, let $\{u\} = N^+(v)$, Since $S'$ is stalled, $u \in S'$.  However, recalling that $V_- = \emptyset$, since $S'$ is stalled $w \in S'$, where $\{w\} = N^+(u)$.  We can continue the same argument for each vertex along the cycle, giving us $S'=V$.  In the second case, suppose the $i$th run has more than $\lceil\frac{n_i}{2}\rceil$ vertices in $S'$.  Then either the first vertex in the run is in $S'$, or there are two adjacent vertices in the run that are in $S'$.  If there are two or more adjacent vertices  in the run that are in $S'$, let $x$ be the last such vertex.  Then $|N^+(x)\backslash S'|=1$, a contradiction since $S'$ is stalled.  Otherwise, let $x \in S'$ be the first vertex in the run.  We assumed that no adjacent vertices are in $S'$, so the next vertex in the run, $y$, is not in $S'$.  But $\{y\}=N^+(x)$, contradicting our assumption that $S'$ is stalled.  
Hence, $S$ is a maximum FZFS, and $\F(D) = \sum_{i=1}^{\ell} \left\lceil \frac{n_i}{2} \right\rceil.$
\end{proof}

An example of Theorem \ref{thm:cycleonedirection} with $V_- = \emptyset$ is shown in Figure \ref{fig:kunderhalfn}.  Theorem \ref{thm:cyclebothdirections} establishes $\F(D)$ in the case that $D$ has $V_0$, $V_+$, and $V_-$ nonempty.  Figure \ref{fig:koverhalfn} shows an example of Theorem \ref{thm:cyclebothdirections}.

\begin{theorem}
Let $D$ be a weak cycle such that $V_0$, $V_-$, and $V_+$ are nonempty.  Let $d(i,j) = j-i  \mod n$ and set $\ell = \min \{ d(i,j) : i \in V^-, j \in V^+\}$.  Then 
$$\F(D) = n - 1 - \left\lfloor \frac{\ell}{2}\right\rfloor.$$ \label{thm:cyclebothdirections}
\end{theorem}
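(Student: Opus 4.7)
The plan is to prove the formula by constructing an explicit failed zero forcing set of size $n - 1 - \lfloor \ell/2 \rfloor$ and then showing separately that no larger stalled set exists. Fix throughout a pair $(i,j)$ with $i \in V_-$, $j \in V_+$, and $d(i,j) = \ell$. The key structural observation is that by minimality of $\ell$, every edge position strictly between $i$ and $j$ (going forward along the cycle) must lie in $V_0$; otherwise one could produce a strictly closer $V_-$-to-$V_+$ pair. Consequently the vertices $v_{i+1}, v_{i+2}, \ldots, v_j$ form a \emph{segment} of $\ell$ vertices with all bidirectional interior edges, exiting only through the two ``leak'' arcs $v_{i+1} \to v_i$ and $v_j \to v_{j+1}$. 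If $\ell = 1$, then $v_{i+1}$ is a source and the formula reduces to $n-1$ by Corollary~\ref{cor:source}, so I may assume $\ell \geq 2$ below.

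For the upper bound on $\F(D)$, I will take $W := \{v_{i+2t+1} : 0 \leq t < \lfloor \ell/2 \rfloor\} \cup \{v_j\}$, which has exactly $\lfloor \ell/2 \rfloor + 1$ elements. By Proposition~\ref{prop:charnminusk} it suffices to show $W$ is a critical set. This is a direct verification: every vertex outside $W$ either lies outside the segment, in which case its out-neighbors avoid $W$ entirely, or it is an even-offset interior segment vertex $v_{i+2t}$ whose two out-neighbors $v_{i+2t-1}$ and $v_{i+2t+1}$ both lie in $W$.

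For the lower bound, I will show that every critical $W$ satisfies $|W| \geq \lfloor \ell/2 \rfloor + 1$. The crux is that for each $v_k \in V \setminus W$ with $i+1 \leq k \leq j$, the stall condition forces $v_{k-1}$ and $v_{k+1}$ to agree on membership in $W$, since $v_k$ has out-degree $2$ in the segment structure. Iterating this ``parity-flip'' rule along the segment shows that the pattern of $W$ on the extended segment $\{v_i, \ldots, v_{j+1}\}$ must either contain at least $\lfloor \ell/2 \rfloor + 1$ vertices or else place no vertex of $W$ inside the segment at all; in the latter case, the leak-arc conditions at $v_{i+1}$ and $v_j$ force $v_i, v_{j+1}$ into $S := V \setminus W$, and then, using that $\ell \geq 2$ rules out any source in $D$, the analogous propagation around the remainder of the cycle forces $V \subseteq S$, contradicting $W \neq \emptyset$. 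The main obstacle will be the bookkeeping for the ``spill'' case, in which $W$ contains some vertices of the extended segment and some outside: one must verify, using the minimality of $\ell$ (so that any other $V_-$-to-$V_+$ pair in the cycle produces a longer segment), that savings in $W$ inside the segment are always at least offset by additional $W$-vertices required outside.
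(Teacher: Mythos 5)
Your overall strategy is the same as the paper's: exhibit a critical set of cardinality $\lfloor \ell/2\rfloor+1$ and show that every critical set is at least that large, then invoke Proposition \ref{prop:charnminusk}. (You have the labels reversed, though: exhibiting one small critical set gives the \emph{lower} bound $\F(D)\ge n-1-\lfloor\ell/2\rfloor$, while the universal lower bound on critical sets gives the upper bound on $\F(D)$.) Your construction of $W$ and its verification are correct, and the ``parity-flip'' observation is sound: for $i+1\le k\le j$ with $v_k\notin W$, criticality forces $v_{k-1}$ and $v_{k+1}$ to agree on $W$-membership, since $N^+(v_k)=\{v_{k-1},v_{k+1}\}$. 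Pushed slightly further, this already disposes of the ``spill'' case you flag as the main obstacle: the only forbidden local patterns on the window $v_i,\dots,v_{j+1}$ are a $W$-vertex adjacent to two consecutive non-$W$ vertices (with the middle vertex among $v_{i+1},\dots,v_j$), so either the window is entirely $W$-free or it contains no two consecutive non-$W$ vertices, in which case it already holds at least $\lfloor(\ell+2)/2\rfloor=\lfloor\ell/2\rfloor+1$ vertices of $W$ no matter what happens elsewhere. No inside-versus-outside bookkeeping is needed.

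The genuine gap is in the case you treat as routine: when $W$ avoids the extended segment. The claim that propagation then forces $V\subseteq V\setminus W$ is false whenever the cycle contains more than one $V_-$-to-$V_+$ pair. The forward forcing chain out of $v_{j+1}$ stops the moment it reaches a vertex $v_k$ with $k\in V_-$, because $v_k$ has no arc to $v_{k+1}$ and therefore imposes no constraint on it; the absence of sources (from $\ell\ge 2$) does not rescue this. Concretely, take $n=8$ with $V_-=\{0,4\}$, $V_+=\{2,6\}$, $V_0=\{1,3,5,7\}$, so $\ell=2$: the set $W=\{v_5,v_6\}$ is a nonempty critical set that avoids the extended segment $\{v_0,v_1,v_2,v_3\}$ of the pair $(0,2)$ entirely. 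What actually happens in the avoid case is that the argument must be iterated: the remainder of the cycle contains another $V_-$-to-$V_+$ pair whose segment has length at least $\ell$ by minimality of $\ell$, and one repeats the dichotomy there, terminating either with $W=\emptyset$ (contradicting the definition of a critical set) or with $|W|\ge\lfloor\ell/2\rfloor+1$ forced in some segment. This iteration over segments is exactly the $P$, $P_1$, $P_2$, $P'$ decomposition in the paper's proof and is the real content of this direction; your proposal omits it.
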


\begin{proof} 
Let $(\hat{i}, \hat{j})$ be the indices that achieve $\ell = \min \{ d(i,j) : i \in V^-, j \in V^+\}$.  Define 

$$S = 
\begin{cases}
V \backslash \left\{ v_{\hat{i}+1}, v_{\hat{i}+3}, v_{\hat{i}+5}, \ldots, v_{\hat{i}+\ell} = v_{\hat{j}}  \right\} & \mbox{ if } \ell \mbox{ is odd} \\
V \backslash \{ v_{\hat{i}+1}, v_{\hat{i}+3}, v_{\hat{i}+5}, \ldots v_{\hat{i}+\ell-1} \} \cup \{ v_{\hat{j}} = v_{\hat{i}+\ell}  \} & \mbox{ if } \ell \mbox{ is even } \\
\end{cases}
$$
where all indices are taken modulo $n$.  We show that $V \backslash S$ is a critical set. If $v \in V \backslash S$, then $v = v_{\hat{i}+2m}$ for some nonnegative $m$, so if $u \in N^-(v)$ then $u = v_{\hat{i}+2m + 1}$ or $u = v_{\hat{i}+2m - 1}$.  If $u = v_{\hat{i}+2m+ 1}$, then $N^+(u) = \{  v, v_{\hat{i}+2m+2} \}$.  If $u = v_{\hat{i}+2m - 1}$, then $N^+(u) = \{  v, v_{\hat{i}+2m-2} \}$.  Thus, $V \backslash S$ is a critical set, and by Observation \ref{obs:critical}, $S$ is a FZFS.

Let $W$ be a critical set in $D$.  We show that $|W| \geq \lfloor \ell / 2 \rfloor + 1$.     
Choose any $i \in V_-$ and $j \in V_+$ such that $d(i,j)$ is minimal.  That is, if there exists $j^*$ with $d(i,j^*) < d(i,j)$ or $i^*$ such that $d(i^*,j)<d(i,j)$, then replace $j$ with $j^*$ or $i$ with $i^*$ as appropriate (or if both cases are true, pick one).  Do this until there exist no such $i^*$ or $j^*$.  

Let $P$ denote the weak path $v_i, v_{i+1}, \ldots, v_j, v_{j+1}$.  Note that for any $v_s \in P$, if $s \notin \{i, j, j+1\}$, then $s \in V_0$.  Let $P_1$ denote the weak path starting from $v_i$ and descending modulo $n$ (i.e, the weak path that is edge-disjoint from $P$) until the first vertex $v_x$ such that $x-1 \in V_+$.  Note that $v_x$ exists, because if no other vertex before satisfies the property, then $v_{j+1}$ is such a vertex.  Similarly, let $P_2$ be the weak path starting from $v_{j+1}$ and ascending modulo $n$ (i.e, the weak path that is edge-disjoint from $P$) until the first vertex $v_y$ with $y \in V_-$.  

Note that if there exist adjacent vertices in $V(P) \cap V \backslash W$, then $\left( V(P) \cup V(P_1) \cup   V(P_2) \right)  \cap W = \emptyset$, because otherwise there exists a vertex $v \in V \backslash W$ with $|N^+(v) \cap W| = 1$.  So, either $\left( V(P) \cup V(P_1) \cup   V(P_2) \right)  \cap W = \emptyset$, or $V(P) \cap W >  \lfloor \frac{d(i,j)}{2} \rfloor$.

If $V(P) \cup V(P_1) \cup   V(P_2) = V(D)$, then we have shown that that $V(P) \cap W >  \lfloor \frac{d(i,j)}{2} \rfloor$, since  otherwise $W = \emptyset$, violating the definition of a critical set.  If $V(D) \backslash \left( V(P) \cup V(P_1) \cup   V(P_2) \right) \neq \emptyset$, let $P'$ be the weak path from $v_{x}$ to $v_{y}$ whose internal vertices are exactly those vertices in $V(D) \backslash \left( V(P) \cup V(P_1) \cup   V(P_2) \right)$. Then we can choose $i' \in V_- \cap V(P')$ and $j' \in V_+ \cap V'$ such that $d(i,j)$ is minimal (note that $i'=y, j'=x$ satisfies $i' \in V_- \cap V(P')$ and $j' \in V_+ \cap V'$, so there exists such a minimal $i'$ and $j'$).  We can repeat the same argument as above for this set of vertices, giving us that either $V(P'')  \cap W = \emptyset$, or $V(P') \cap W >  \lfloor \frac{d(i',j')}{2} \rfloor$ where $P''$ is a nonempty weak path containing $P'$ as a sub-weak-path.  We can do this repeatedly until there are no remaining vertices in $V(D)$, giving us that $W = \emptyset$ or $W >  \lfloor \frac{d(i,j)}{2} \rfloor$ for some $i \in V_-$ and $j \in V_+$.  Since the former violates the definition of critical set and $\ell = d(i,j)$ minimizes the latter, it follows that  $|W| \geq \lfloor \ell / 2 \rfloor + 1$ for any critical set $W$.  

Thus, by Observation \ref{obs:critical}, $\F(D) \geq n - 1 - \lfloor \ell / 2 \rfloor$.  
\end{proof}

We establish that a weak cycle $D$ on $n$ vertices can achieve any value of $\F(D)$ between $0$ and $n-1$ by choosing appropriate arc orientations.  There are two constructions, depending on whether $\F(D) > \frac{n}{2}$ or $\F(D)\leq \frac{n}{2}$.  Examples  are shown in Figures \ref{fig:kunderhalfn}--\ref{fig:koverhalfn}.

\begin{figure}[h!]
\begin{minipage}{3in}
\begin{center}
\begin{tikzpicture}[auto]
\def\Radius{0.8cm}
\def\number{10}
\def\othernumber{9}
\tikzstyle{vertex}=[draw, circle, inner sep=0.8mm]
\tikzset{edge/.style = {->,> = Latex}}
\tikzset{curvededge/.style = {->,> = Latex, bend right}}
\foreach \y  in {9}{
\node (v\y) at ({-360/\number*(\y)}:\Radius) [vertex, label=right:$v_{\y}$]{};}
\node (v0) at (0:\Radius) [vertex, label=right:$v_0$]{};
\foreach \y  in {1}{
\node (v\y) at ({-360/\number*(\y)}:\Radius) [vertex, fill=blue, label=right:$v_{\y}$]{};}
\foreach \y  in {3}{
\node (v\y) at ({-360/\number*(\y)}:\Radius) [vertex, fill=blue, label=below:$v_{\y}$]{};}
\node (v2) at ({-360/\number*(2)}:\Radius) [vertex,  label=below:$v_{2}$]{};
\foreach \y  in {6}{
\node (v\y) at ({-360/\number*(\y)}:\Radius) [vertex, label=left:$v_{\y}$  ]{};}
\foreach \y  in {5}{
\node (v\y) at ({-360/\number*(\y)}:\Radius) [vertex, fill=blue, label=left:$v_{\y}$  ]{};}
\node (v4) at ({-360/\number*(4)}:\Radius) [vertex, label=left:$v_{4}$]{};
\foreach \y  in {7, 8}{
\node (v\y) at ({-360/\number*(\y)}:\Radius) [vertex, label=above:$v_{\y}$]{};}
\foreach \y [evaluate=\y as \x using {int(\y+1)}] in {1,3,5,6,7,8}{
\draw[edge] (v\y) to (v\x);}

\foreach \y [evaluate=\y as \x using {int(\y+1)}] in {0, 2, 4}{
\draw[curvededge] (v\y) to (v\x);
\draw[curvededge] (v\x) to (v\y);}
\draw[edge] (v9) to (v0);
\end{tikzpicture}
\caption{Weak cycle with $n=10$,  $F(D)=3$.  Note $V_ - = \emptyset$.}
\label{fig:kunderhalfn}
\end{center}
\end{minipage}
\hskip0.2in
\begin{minipage}{3in}
\begin{center}
\begin{tikzpicture}[auto]
\def\Radius{0.8cm}
\def\number{10}
\def\othernumber{9}
\tikzstyle{vertex}=[draw, circle, inner sep=0.8mm]
\tikzset{edge/.style = {->,> = Latex}}
\tikzset{curvededge/.style = {->,> = Latex, bend right}}

\node (v0) at (0:\Radius) [vertex, label=right:$v_0$]{};
\node (v1) at ({-360/\number*(1)}:\Radius) [vertex, fill=blue, label=right:$v_1$]{};

\foreach \y  in {2, 3}{
\node (v\y) at ({-360/\number*(\y)}:\Radius) [vertex,fill=blue, label=below:$v_{\y}$]{};}

\foreach \y  in {4, 6}{
\node (v\y) at ({-360/\number*(\y)}:\Radius) [vertex, label=left:$v_{\y}$  ]{};}

\node (v5) at ({-360/\number*(5)}:\Radius) [vertex,  fill=blue, label=left:$v_{5}$]{};

\foreach \y  in {8}{
\node (v\y) at ({-360/\number*(\y)}:\Radius) [vertex, label=above:$v_{\y}$]{};}

\foreach \y  in {7}{
\node (v\y) at ({-360/\number*(\y)}:\Radius) [vertex, fill=blue, label=above:$v_{\y}$]{};}

\node (v9) at ({-360/\number*(9)}:\Radius) [vertex, fill=blue, label=right:$v_9$]{};

\draw[edge] (v0) to (v1);
\draw[edge] (v4) to (v3);
\foreach \y [evaluate=\y as \x using {int(\y+1)}] in {1, 2, 4,5, 6, 7, 8}{
\draw[curvededge] (v\y) to (v\x);
\draw[curvededge] (v\x) to (v\y);}
\draw[curvededge] (v9) to (v0);
\draw[curvededge] (v0) to (v9);
\end{tikzpicture}
\caption{Weak cycle with $n=10$, $F(D)=6$. Note $V_0, V_-,$ and $V_+$ are nonempty.}
\label{fig:koverhalfn}
\end{center}
\end{minipage}
\end{figure}

\begin{theorem}
For any $n$ and $k$ with $0\leq k \leq n-1$, there exists a weak cycle $D$ on $n$ vertices such that $\F(D)=k$. \label{thm:alyssa}
\end{theorem}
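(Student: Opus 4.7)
The plan is to split by the value of $k$ and explicitly construct a weak cycle in each range, invoking Theorems~\ref{thm:f0}, \ref{thm:cycleonedirection}, and~\ref{thm:cyclebothdirections} as appropriate. For $k = 0$, take $D$ to be any directed cycle on $n$ vertices; Theorem~\ref{thm:f0} then gives $\F(D) = 0$.

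For the range $1 \leq k \leq \lceil n/2 \rceil$, I would use Theorem~\ref{thm:cycleonedirection}. Label the vertices cyclically $v_0, v_1, \ldots, v_{n-1}$ and set $m = \min(2k, n)$. The construction places both arcs $v_i v_{i+1}$ and $v_{i+1} v_i$ at each of the $m$ consecutive positions $i = 0, 1, \ldots, m-1$ (so those indices land in $V_0$) and only the forward arc $v_i v_{i+1}$ at every remaining position (so those indices land in $V_+$), giving $V_- = \emptyset$. This produces a single maximal $V_0$-run of length $m$, and Theorem~\ref{thm:cycleonedirection} yields $\F(D) = \lceil m/2 \rceil$. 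If $2k \leq n$ then $m = 2k$ and $\lceil m/2 \rceil = k$; otherwise $n$ must be odd with $k = (n+1)/2$, in which case $m = n$ and $\lceil n/2 \rceil = k$.

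For the range $\lceil n/2 \rceil + 1 \leq k \leq n-1$ (which forces $n \geq 4$), I would apply Theorem~\ref{thm:cyclebothdirections}. Choose $\ell = 1$ if $k = n - 1$ and $\ell = 2(n-1-k)$ otherwise, so that $1 \leq \ell \leq n - 1$ and $\lfloor \ell/2 \rfloor = n - 1 - k$ in both subcases. Construct $D$ by using only the backward arc $v_1 v_0$ at position $0$, only the forward arc $v_\ell v_{\ell+1}$ at position $\ell$, and both arcs at every other position. Then $V_- = \{0\}$, $V_+ = \{\ell\}$, and $V_0$ consists of the remaining $n - 2 \geq 1$ indices, so all three sets are nonempty. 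The only pair in $V_- \times V_+$ is $(0, \ell)$, contributing $d(0, \ell) = \ell$ to the minimum appearing in Theorem~\ref{thm:cyclebothdirections}, so $\F(D) = n - 1 - \lfloor \ell / 2 \rfloor = k$.

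The main obstacle I anticipate is parity bookkeeping at the boundaries between cases: at $k = \lceil n/2 \rceil$ the $V_0$-run must wrap around to fill the whole cycle when $n$ is odd (so one verifies the formula there separately), and at $k = n-1$ the expression $2(n-1-k)$ equals $0$, forcing the separate choice $\ell = 1$. One must also check that the value of $\ell$ in Case~3 never exceeds $n-1$, so that $V_-$, $V_+$, and $V_0$ are simultaneously nonempty as required by Theorem~\ref{thm:cyclebothdirections}. In both places the verification reduces to a short parity check once the constructions are written out.
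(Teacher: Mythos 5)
Your overall strategy --- a directed cycle for $k=0$, a construction fed into Theorem~\ref{thm:cycleonedirection} for small $k$, and one fed into Theorem~\ref{thm:cyclebothdirections} for large $k$ --- is the same as the paper's, but your case boundary is misplaced in a way that creates a genuine error. For odd $n$ and $k=\lceil n/2\rceil=(n+1)/2$ you take $m=\min(2k,n)=n$, so every index lies in $V_0$ and your digraph is the weak cycle with both arcs at every position, i.e.\ the undirected cycle $C_n$. In $C_n$ every vertex has exactly two out-neighbors, so a proper subset $S$ is stalled only if each $v\in S$ has both neighbors in $S$ or both outside $S$; an induced subgraph of a cycle with all degrees $0$ or $2$ that is not the whole cycle is an independent set, so $\F(C_n)=\lfloor n/2\rfloor=(n-1)/2\neq k$. (Concretely, for $n=3$, $k=2$: in the doubled triangle any two vertices force the third, so $\F=1$.) The root cause is that when the single $V_0$-run wraps around the entire cycle, Theorem~\ref{thm:cycleonedirection} is being applied outside its effective scope: its proof selects every other vertex starting just after the beginning of a run and argues maximality run by run, and a run with no endpoints has no well-defined beginning; the formula $\lceil n_1/2\rceil$ then overcounts by one when $n$ is odd. (At $k=n/2$ with $n$ even your construction also degenerates to the full undirected cycle, but there $\lceil n/2\rceil=\lfloor n/2\rfloor$, so the claimed value happens to be correct.)

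The remedy is exactly the paper's choice of boundary and construction. For $1\le k\le n/2$ the paper does not use one run of length $2k$ but $k$ singleton runs, $V_0=\{0,2,\ldots,2k-2\}$ with all remaining indices in $V_+$; since $k\le n/2$ forces $|V_+|=n-k\ge 1$, no run ever wraps, and Theorem~\ref{thm:cycleonedirection} gives $\sum_{i=1}^{k}\lceil 1/2\rceil=k$ legitimately. All $k>n/2$ (including $k=(n+1)/2$ for odd $n$) are then sent to the Theorem~\ref{thm:cyclebothdirections} construction; e.g.\ with the paper's $V_+=\{0\}$, $V_-=\{2k-n+1\}$ one gets $\ell=2n-2k-1$ and $\F(D)=n-1-\lfloor\ell/2\rfloor=k$, and your own Case~3 recipe also works on this larger range since $\ell=2(n-1-k)\le n-2$ there. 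Your Case~3 itself is fine as written; only the assignment of $k=\lceil n/2\rceil$ (odd $n$) to Case~2 needs to change.
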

\begin{proof}
{\bf Case 1: $k \leq \frac{n}{2}$}.
In this case, we use the construction from Theorem \ref{thm:cycleonedirection}.  Let $V_0 = \{0, 2, \ldots 2k-2\}$.  Let $V_+$ consist of all remaining indices between $0$ and $n-1$. 
An example with $n=10$ and $k=3$ is shown in Figure \ref{fig:kunderhalfn}.  By Theorem  \ref{thm:cycleonedirection}, $\F(D) = \sum_{i=1}^{k} \left\lceil \frac{n_i}{2} \right\rceil$.  Note that using $k=0$ runs of vertices from $V_0$ gives us a directed cycle, the special case $\F(D) = 0$.  

{ \bf Case 2: $\frac{n}{2} <k \leq n-1$}.
In this case, we use the construction from Theorem \ref{thm:cyclebothdirections}. Let $V_+=\{ 0 \}$, let $V_- = \{2k-n+1\}$, and let $V_0$ consist of all remaining indices between $0$ and $n-1$.   Then by Theorem  \ref{thm:cyclebothdirections}, 
$\F(D) = n -1 - \left\lfloor \frac{\ell}{2} \right\rfloor = n -1 - \left\lfloor \frac{n-(2k-n+1)}{2} \right\rfloor   = k.$
Note that this includes the case $\F(D)=n-1$, where $v_0$ is a source.  
\end{proof}

We can restate Theorem \ref{thm:alyssa} in terms of critical sets by applying Proposition \ref{prop:charnminusk}.
\begin{corollary} 
For any $n$ and $k$ with $1 \leq k \leq n$, there exists a weak cycle $D$ on $n$ vertices whose smallest critical set is of cardinality $k$.  
\end{corollary}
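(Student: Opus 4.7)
The plan is to derive this corollary almost immediately from Theorem \ref{thm:alyssa} via the bijection between failed zero forcing numbers and smallest critical set cardinalities provided by Proposition \ref{prop:charnminusk}. Given $n$ and $k$ with $1 \leq k \leq n$, I would set $k' = n-k$, so that $0 \leq k' \leq n-1$, placing $k'$ in the range to which Theorem \ref{thm:alyssa} applies.

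Next, I would invoke Theorem \ref{thm:alyssa} to produce a weak cycle $D$ on $n$ vertices with $\F(D) = k' = n-k$. Finally, since $\F(D) = n - k$, Proposition \ref{prop:charnminusk} tells us precisely that the smallest cardinality of any critical set in $D$ equals $k$, which is exactly the conclusion of the corollary.

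There is essentially no obstacle here beyond unwinding the definitions; the substantive content has already been done in Theorem \ref{thm:alyssa}, where both explicit constructions (the $V_0,V_+$ only case for $k' \leq n/2$ and the mixed $V_0,V_-,V_+$ case for $k' > n/2$) yield concrete weak cycles realizing every allowed value of $\F(D)$. The only thing to verify is that the full range $1 \leq k \leq n$ for the critical-set cardinality corresponds under $k \mapsto n-k$ exactly to the range $0 \leq k' \leq n-1$ guaranteed by Theorem \ref{thm:alyssa}, which it does. Thus the proof is a single short paragraph combining these two previous results.
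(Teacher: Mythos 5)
Your proposal is correct and matches the paper exactly: the corollary is obtained by applying Theorem \ref{thm:alyssa} to produce a weak cycle with $\F(D)=n-k$ and then invoking Proposition \ref{prop:charnminusk} to translate this into the statement about the smallest critical set. The range check $1\leq k\leq n \Leftrightarrow 0\leq n-k\leq n-1$ is the only detail to verify, and you handle it.
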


\subsection{Line graphs including de Bruijn and Kautz digraphs}
\label{sec:line}
We now look at $\F(D)$ in the case that $D$ is an iterated line graph.   Since some of the digraphs we consider here have loops, we describe the following modified CCR that applies only to digraphs with loops.  We continue to use the CCR introduced earlier if $D$ has no loops.
\begin{itemize}
\item $B^0(S) := S$
\item $B^{i+1}(S) := B^i(S) \cup \{w : \{w\} = N^+(v) \backslash B^i(S) \mbox{ for some } v \in V\}$
\end{itemize}
We could also state the CCR as follows: if any vertex $v \in V$ has exactly one empty out-neighbor $u$, then $u$ will be filled.  In other words, the CCR that applies to digraphs with loops is identical to the CCR that applies to digraphs without loops, except that a vertex $u$ may become filled if $u$ is the unique empty out-neighbor of {\bf any} vertex $v$, whether or not $v$ is filled.  We  make the following observation, similar to Observation \ref{obs:critical} but for digraphs with loops.
\begin{obs}
In a digraph with loops, $W$ is a strongly critical set if and only if $V \backslash W$ is a stalled zero forcing set.  
\end{obs}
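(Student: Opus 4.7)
The plan is to establish both directions of the equivalence by direct unpacking of the two definitions, with the substitution $S = V(D) \setminus W$ throughout. Since the modified CCR that applies to digraphs with loops quantifies over all $v \in V$ rather than only over filled vertices, the stalled condition under this rule should translate into the ``for every $v \in V(D)$'' quantifier appearing in the definition of a strongly critical set, which is exactly what needs to be shown.

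For the forward direction, I would assume $W$ is strongly critical, so $|N_D^+(v) \cap W| \neq 1$ for every $v \in V(D)$. Setting $S = V \setminus W$ and using $N_D^+(v) \cap W = N_D^+(v) \setminus S$, this reads $|N_D^+(v) \setminus S| \neq 1$ for every $v \in V$. Hence there is no $v \in V$ and no $w$ with $\{w\} = N^+(v) \setminus S$, so the loop-modified CCR produces no new vertices from $B^0(S) = S$, yielding $B^1(S) = S$; that is, $S$ is stalled. For the reverse direction, I would run the same chain of equivalences backward: if $S$ is stalled under the loop-CCR, then for every $v \in V$ we have $|N^+(v) \setminus S| \neq 1$, which rewrites to $|N^+(v) \cap W| \neq 1$ for every $v \in V$, so $W$ is strongly critical (noting that $W$ is nonempty whenever $S \subsetneq V$, which is the only interesting case).

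The only point that needs emphasis, and the one I would flag rather than a real obstacle, is the match between the loop-CCR modification (``for some $v \in V$'' in place of ``for some $v \in B^i(S)$'') and the strongly critical definition (``for every $v \in V(D)$'' in place of ``for every $v \in V(D) \setminus W$''). This mirroring is precisely what turns the weakly-critical/stalled correspondence of the earlier observation into a strongly-critical/stalled correspondence once loops are allowed; beyond noting this alignment, the proof is pure set-theoretic translation and requires no further machinery.
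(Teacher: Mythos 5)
Your argument is correct and is exactly the definitional unpacking the paper has in mind: the paper states this as an Observation without proof precisely because, under the loop-modified CCR, ``stalled'' means $|N^+(v)\setminus S|\neq 1$ for every $v\in V$, which rewrites verbatim as the strongly critical condition for $W=V\setminus S$. Your handling of the quantifier shift (all of $V$ versus only filled vertices) and the nonemptiness edge case is the right level of care; nothing further is needed.
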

Note that although $\F(D)$ is defined for any digraph without loops, there exist $D$ with loops that have $\Z(D) = 0$ and therefore $\F(D)$ is undefined.  Also note that if $D$ contains a loop $uu$, then $u \in N^+(u)$ and $u \in N^-(u)$.  The digraph consisting of $V=\{u\}$ and $A=\{ uu \}$ then has $\Z(D)=0$ and $\F(D)$ undefined.  We do not characterize $D$ with $\F(D)$ undefined here, but note the following observation and lemma.  

\begin{obs}
If $D$ is a digraph with loops, then $\F(D)$ is undefined if and only if $\Z(D)=0$.  
\end{obs}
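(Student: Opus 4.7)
The plan is to recognize that both ``$\F(D)$ is undefined'' and ``$\Z(D)=0$'' are equivalent to the single statement that $\emptyset$ is a zero forcing set of $D$.

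The forward direction is nearly immediate.  If $\F(D)$ is undefined, then $D$ has no FZFS, so in particular $\emptyset$ cannot be a FZFS.  Hence $\emptyset$ must be a ZFS, which forces $\Z(D) \leq |\emptyset| = 0$, and so $\Z(D) = 0$.

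For the reverse direction, I would first establish a monotonicity lemma for the modified CCR: whenever $S \subseteq S'$, we have $B^i(S) \subseteq B^i(S')$ for every $i \geq 0$.  I would prove this by induction on $i$.  The base case is immediate, and for the inductive step, suppose $w \in B^{i+1}(S) \setminus B^i(S)$.  Then some $v \in V$ satisfies $\{w\} = N^+(v) \setminus B^i(S)$, and by the inductive hypothesis $B^i(S) \subseteq B^i(S')$, so $N^+(v) \setminus B^i(S')$ is a subset of $\{w\}$.  Either this set equals $\{w\}$, in which case $w$ is forced into $B^{i+1}(S')$, or it is empty, meaning $w \in B^i(S') \subseteq B^{i+1}(S')$.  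Either way $w \in B^{i+1}(S')$.  With monotonicity in hand, $\Z(D) = 0$ gives $B^T(\emptyset) = V$ for some $T \geq 0$, so $V = B^T(\emptyset) \subseteq B^T(S)$ for every $S \subseteq V$; every subset is thus a ZFS, no FZFS exists, and $\F(D)$ is undefined.

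The step most warranting care is the monotonicity lemma, because the modified CCR for digraphs with loops permits forces from any $v \in V$, regardless of whether $v$ is filled.  Enlarging the filled set can in principle disable a pending force by emptying the set of unfilled out-neighbors, and the casework above is precisely what ensures that in this event the would-be target has already been absorbed, so the conclusion of monotonicity is preserved.
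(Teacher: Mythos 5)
Your proof is correct; the paper states this as an Observation with no proof, and your argument supplies exactly the justification that is being treated as immediate. The one point that genuinely needs checking is the monotonicity of $B^i(\cdot)$ under the modified CCR (where unfilled vertices may force), and your case analysis—either the force still fires for the larger set, or the target was already absorbed—handles it correctly.
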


\begin{lemma}
If $N^-(v)=0$ for some $v \in V$, or $N^+(v) \geq 2$ for all $v \in V$ and $V \neq \emptyset$, then $\F(D)$ is defined. \label{lem:defined}
\end{lemma}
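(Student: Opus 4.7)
The plan is to show that in each of the two hypothesized cases, the empty set $\emptyset$ is not a zero forcing set of $D$. This suffices because $\F(D)$ is undefined precisely when every subset of $V$ is a ZFS; by the monotonicity built into the definition of $B^i$, that in turn happens if and only if $\emptyset$ itself is a ZFS (equivalently, $\Z(D)=0$). So the goal reduces to exhibiting, in each case, a vertex that is never brought into $B^i(\emptyset)$ for any $i$.

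Case 1 (there exists $v\in V$ with $\deg^-(v)=0$): I would argue directly from the modified CCR. To add $v$ to $B^{i+1}(\emptyset)$ we would need some $u\in V$ with $\{v\}=N^+(u)\setminus B^i(\emptyset)$, which in particular requires $v\in N^+(u)$, i.e.\ $u\in N^-(v)$. Since $N^-(v)=\emptyset$, no such $u$ exists, so $v\notin B^i(\emptyset)$ for every $i\geq 0$. Hence $\emptyset$ is not a ZFS and $\F(D)$ is defined.

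Case 2 (every $v\in V$ has $|N^+(v)|\geq 2$, and $V\neq\emptyset$): Here I would examine a single step of the modified CCR starting from $\emptyset$. For any $v\in V$, $N^+(v)\setminus B^0(\emptyset)=N^+(v)$, which has cardinality at least $2$, so the condition $\{w\}=N^+(v)\setminus B^0(\emptyset)$ fails for every $v$. Thus $B^1(\emptyset)=\emptyset$, the empty set is stalled, and since $V\neq\emptyset$ it follows that $\emptyset$ is not a ZFS, so again $\F(D)$ is defined.

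The only real subtlety (and the main thing to handle carefully) is the switch to the loop-aware CCR, where $v$ in the color-change rule is quantified over all of $V$ rather than over $B^i(S)$: one must be sure to use the correct rule throughout and to note that in Case 1 the argument for $v$ being unreachable uses only the in-neighborhood, while in Case 2 the argument uses only that the out-degree lower bound prevents even a single color change from $\emptyset$.
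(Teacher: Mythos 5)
Your proof is correct and follows essentially the same route as the paper's: in the first case a vertex with no in-neighbors can never be forced (the paper phrases this as ``$v\in S$ for any ZFS $S$''), and in the second case the out-degree bound prevents any color change from the empty set, so in both cases $\Z(D)\geq 1$ and hence $\F(D)$ is defined via the observation that $\F(D)$ is undefined exactly when $\Z(D)=0$. Your write-up merely makes explicit the monotonicity argument and the loop-aware CCR bookkeeping that the paper leaves implicit.
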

\begin{proof}
If $N^-(v)=0$ for some $v \in V$, then $v \in S$ for any ZFS $S$.  Thus, $\Z(D) \geq 1$, and $\F(D)$ is defined.  If $N^+(v) \geq 2$  for all $v \in V$, and $|V| \geq 1$, then each $u \in V$ has at least two out-neighbors, so $\Z(D) \geq 1$ and $\F(D)$ is defined.  
\end{proof}

\begin{dfn}
For a digraph $D = (V,A)$, the \emph{line digraph} of $D$ is the digraph $L(D)$ where 
\begin{itemize}
\item $V(L(D)) = A(D)$, and 
\item $A(L(D)) = \{ab : a, b  \in V(L(D)), \mbox{ and the head of } a \mbox{ is the tail of } b \mbox{ in } D\}$.  
\end{itemize}
\end{dfn}

We use the following result from  \cite[Lemma 3.5]{ferrero2019zero}.
\begin{lemma} \cite{ferrero2019zero}
Let $D$ be a digraph and let $uv$ be a vertex of $L(D)$.  If $deg^+_{L(D)}(uv) \geq 2$, then every subset $T \subseteq N_{L(D)}^+(uv)$ with $|T| \geq 2$ is a strongly critical set in $L(D)$.   \label{lem:ferrero}
\end{lemma}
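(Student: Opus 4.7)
The plan is to unfold the definitions and observe that the out-neighborhood structure in a line digraph is extremely rigid: every out-neighbor of a vertex $uv \in V(L(D))$ is an arc of $D$ whose tail is the head $v$ of $uv$. Concretely, $N^+_{L(D)}(uv) = \{vw : vw \in A(D)\}$, so any $T \subseteq N^+_{L(D)}(uv)$ with $|T| \geq 2$ is a collection of at least two arcs of $D$ all sharing the common tail $v$.

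Next, I would verify the strongly critical condition vertex-by-vertex in $L(D)$. Pick an arbitrary $xy \in V(L(D))$ (that is, an arbitrary arc $xy$ of $D$). By the same description of out-neighborhoods, $N^+_{L(D)}(xy) = \{yz : yz \in A(D)\}$, which consists precisely of those arcs of $D$ whose tail is $y$. I would then split into two cases depending on whether $y = v$ or $y \neq v$.

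If $y \neq v$, then every element of $N^+_{L(D)}(xy)$ has tail $y \neq v$, while every element of $T$ has tail $v$, forcing $N^+_{L(D)}(xy) \cap T = \emptyset$. If $y = v$, then $N^+_{L(D)}(xy)$ is the set of all arcs of $D$ with tail $v$, which contains $T$; hence $N^+_{L(D)}(xy) \cap T = T$, a set of size $|T| \geq 2$. In either case $|N^+_{L(D)}(xy) \cap T| \neq 1$.

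Since this holds for every $xy \in V(L(D))$, the set $T$ satisfies the strongly critical condition by definition. There is no real obstacle here beyond keeping the two levels of notation (arcs of $D$ versus vertices of $L(D)$) disentangled; the hypothesis $\deg^+_{L(D)}(uv) \geq 2$ is only needed to guarantee that a subset $T$ of size at least two exists inside $N^+_{L(D)}(uv)$, so that the conclusion is not vacuous.
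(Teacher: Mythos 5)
Your argument is correct: in $L(D)$ the out-neighborhood of any vertex $xy$ is exactly the set of arcs of $D$ with tail $y$, so a set $T$ of at least two arcs sharing the common tail $v$ meets each such out-neighborhood in either $\emptyset$ (when $y\neq v$) or all of $T$ (when $y=v$), never in a single element, which is precisely the strongly critical condition. The paper does not reprove this lemma — it is imported from \cite{ferrero2019zero} — so there is no in-paper proof to compare against; your direct verification is the natural one and is complete, including the observation that the hypothesis $\deg^+_{L(D)}(uv)\geq 2$ serves only to make such a $T$ exist.
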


\begin{obs}
For any weakly connected digraph $D$, $L(D)$ has a source vertex if and only if $D$ has a source vertex.  \label{obs:sourceline}
\end{obs}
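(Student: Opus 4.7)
The plan is to pivot everything on the fact that, for an arc $uv \in A(D)$ viewed as a vertex of $L(D)$, the in-neighbors of $uv$ in $L(D)$ are precisely the arcs of $D$ that end at $u$. In other words,
\[
N_{L(D)}^-(uv) = \{xu : xu \in A(D)\},
\]
so $\deg_{L(D)}^-(uv) = \deg_D^-(u)$. Both directions of the biconditional will follow from this identity almost immediately.

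For the forward direction, suppose $L(D)$ has a source vertex $uv$. Then $0 = \deg_{L(D)}^-(uv) = \deg_D^-(u)$, so $u$ is a source in $D$.

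For the reverse direction, suppose $u$ is a source in $D$. I first want to produce an actual out-arc at $u$, i.e., a vertex of $L(D)$ of the form $uv$. A loop $uu$ is impossible, because $uu \in A(D)$ would put $u \in N_D^-(u)$ and contradict $\deg_D^-(u)=0$. If $u$ had no out-neighbors at all, then $u$ would be isolated in $D$, which (assuming $D$ has at least one arc, so that $L(D)$ is nonempty) is ruled out by weak connectivity. Hence there exists $v \neq u$ with $uv \in A(D)$, giving a vertex $uv \in V(L(D))$, and then $\deg_{L(D)}^-(uv) = \deg_D^-(u) = 0$, so $uv$ is a source in $L(D)$.

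The only mildly delicate point — and really the main (small) obstacle — is the reverse direction, where one must use weak connectivity together with the definition of a source to guarantee an out-arc at $u$ so that $u$ actually appears as the tail of some vertex of $L(D)$. Otherwise the argument is a direct translation of in-degrees between $D$ and $L(D)$.
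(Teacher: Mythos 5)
Your identity $N_{L(D)}^-(uv)=\{xu: xu\in A(D)\}$, hence $\deg^-_{L(D)}(uv)=\deg^-_D(u)$, is exactly the reason the paper records this as an observation without proof, and both directions of your argument are correct, including the care taken in the reverse direction to use weak connectivity (and the exclusion of a loop at a source) to guarantee that a source of $D$ actually has an out-arc and thus appears as a tail in $V(L(D))$. This matches the paper's intended justification; the only caveat, which you already flag, is the degenerate case where $L(D)$ is empty, which is excluded in the paper's application by the hypothesis $|V(D)|\geq 2$.
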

We note the following proposition, analogous to Proposition \ref{prop:charnminusk} but for graphs with loops.  
\begin{prop}
In a digraph $D$ with loops, $\F(D) = n-k$ if and only if the minimum cardinality of any strongly critical set in $D$ is $k$.   \label{prop:charnminuskloops}
\end{prop}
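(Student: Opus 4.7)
The plan is to mirror the proof of Proposition \ref{prop:charnminusk} line by line, substituting ``strongly critical set'' for ``critical set'' and using the modified CCR appropriate to digraphs with loops. The key bridge is the observation stated immediately before this proposition: $W$ is strongly critical if and only if $V \setminus W$ is a stalled zero forcing set under the modified CCR.

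For the forward inequality, I would take a strongly critical set $W$ of minimum cardinality $k$, set $S = V \setminus W$, and invoke the preceding observation to conclude $S$ is stalled. Since $W$ is nonempty by the definition of a critical set, $S \subsetneq V$, so $S$ is a FZFS of cardinality $n-k$. Hence $\F(D) \geq n-k$.

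For the reverse inequality, let $S$ be a maximum FZFS. I would first argue that $S$ must itself be stalled: otherwise, iterating the modified CCR produces a strictly increasing chain $B^0(S) \subsetneq B^1(S) \subsetneq \cdots$ which, by finiteness of $V$, stabilizes at some $S^*$; since $S$ is not a ZFS the chain never reaches $V$, so $S^* \subsetneq V$ is a stalled FZFS strictly larger than $S$, contradicting maximality of $|S|$. Hence $S$ is stalled, and the observation then identifies $V \setminus S$ as a strongly critical set of cardinality $n - \F(D)$; minimality of $k$ gives $k \leq n - \F(D)$, i.e.\ $\F(D) \leq n-k$.

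The argument is essentially mechanical once the preceding observation is in hand; the only step that requires a moment's care is confirming that the modified $B^i$ operator retains the monotonicity $B^i(S) \subseteq B^{i+1}(S)$ and the finite stabilization property used in the reverse direction, both of which follow immediately from the definition and the finiteness of $V$.
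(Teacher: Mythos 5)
Your proof is correct and follows exactly the route the paper intends: the paper states this proposition without a written proof, remarking only that it is analogous to Proposition \ref{prop:charnminusk} via the observation that $W$ is strongly critical if and only if $V \setminus W$ is stalled under the modified CCR, and your argument is precisely that analogy carried out in full. Your explicit justification that a maximum FZFS must be stalled (by passing to the closure $S^*$) is a clean way to handle the one step the paper's original proof of Proposition \ref{prop:charnminusk} treats more tersely.
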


\begin{lemma}
Suppose a digraph $D$ with $|V(D)| \geq 2$ and no source has $\deg^+(v) \leq 1$ for all $v \in V(D)$.  Then $D$ is a set of vertex-disjoint  directed cycles.  \label{lem:degreecycles}
\end{lemma}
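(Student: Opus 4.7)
The plan is to first use a degree-counting argument to upgrade the hypotheses $\deg^+(v)\le 1$ and $\deg^-(v)\ge 1$ (the latter coming from ``no source'') into exact equalities, and then to argue that a digraph in which every vertex has in-degree and out-degree exactly $1$ must decompose into vertex-disjoint directed cycles.

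For the first step, I would invoke the identity $\sum_{v\in V(D)}\deg^+(v)=\sum_{v\in V(D)}\deg^-(v)=|A(D)|$. The hypothesis $\deg^+(v)\le 1$ gives $|A(D)|\le n$, while no source gives $\deg^-(v)\ge 1$ and hence $|A(D)|\ge n$. So $|A(D)|=n$ and both inequalities must be tight at every vertex, giving $\deg^+(v)=\deg^-(v)=1$ for every $v\in V(D)$.

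For the second step, I pick any $v_0\in V(D)$ and iteratively follow the unique out-neighbor to form the sequence $v_0,v_1,v_2,\ldots$ where $v_{i+1}$ is the unique element of $N^+(v_i)$. Since $V(D)$ is finite, some vertex repeats; let $j$ be least with $v_j=v_i$ for some $i<j$. I claim $i=0$: if $i\ge 1$, then $v_{i-1}$ and $v_{j-1}$ are both in-neighbors of $v_i=v_j$, but $\deg^-(v_i)=1$ forces $v_{i-1}=v_{j-1}$, contradicting the minimality of $j$. Hence the walk closes into a directed cycle $C=v_0\,v_1\cdots v_{j-1}\,v_0$. Every vertex on $C$ has its unique in-neighbor already present on $C$ (since in-degree is $1$), so no arc enters $C$ from outside, and every vertex on $C$ has its unique out-neighbor on $C$, so no arc leaves $C$. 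Thus $C$ is a connected component of $D$.

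Applying the same argument to any vertex outside $C$ yields another vertex-disjoint directed cycle, and iterating exhausts $V(D)$, so $D$ is a vertex-disjoint union of directed cycles. The only mildly subtle point is ensuring the first-repeat argument forces $i=0$; everything else is a direct consequence of the degree equalities, so I expect no real obstacle.
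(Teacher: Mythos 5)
Your proposal is correct and follows essentially the same route as the paper: the degree-sum identity forces $\deg^+(v)=\deg^-(v)=1$ for all $v$, from which the cycle decomposition follows. The paper simply asserts that final implication, whereas you spell out the follow-the-unique-out-neighbor argument; that extra detail is fine but not a different approach.
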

\begin{proof}
We have  $|V(D)| \leq  \sum_{v\in V} \deg^-(v) = \sum_{v\in V} \deg^+(v)   \leq |V(D)|$.  Hence, $\deg^-(v) = \deg^+(v) = 1$ for every $v \in V(D)$, giving us that $D$ is a set of vertex-disjoint directed cycles.  In particular, if $D$ is weakly connected, then $D$ is a directed cycle.  
\end{proof}

The following theorem establishes $\F(L(D))$, with the added assumption that $\F(L(D))$ is defined in the case that $D$ has loops. 

\begin{theorem}
For any weakly connected digraph $D$ with $|V(D)| \geq 2$, set $m = |A(D)|$.  If $D$ does not have loops, or if $D$ has loops and $\Z(L(D))>0$, then 
$$\F(L(D)) = 
\begin{cases}
0 & \mbox{ if } D \mbox{ is a directed cycle, } \\
 m-1 &  \mbox{ if } D \mbox{ has a source, } \\
m -2 &  \mbox{ otherwise.  }\
\end{cases} 
$$

\end{theorem}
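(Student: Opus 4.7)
The plan is to dispatch each branch of the piecewise formula with a distinct earlier result, then spend the real effort on the third case.

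\emph{Directed cycle.} If $D$ is a directed cycle on $m$ arcs, then $L(D)$ is also a directed cycle (on $m$ vertices: consecutive arcs of $D$ are linked in $L(D)$ in exactly one cyclic order, and the first arc links to the last). Theorem~\ref{thm:f0} then gives $\F(L(D)) = 0$.

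\emph{$D$ has a source.} By Observation~\ref{obs:sourceline}, $L(D)$ also has a source, and Corollary~\ref{cor:source}, applied to $L(D)$, yields $\F(L(D)) = m - 1$.

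\emph{Otherwise.} Here $D$ is weakly connected with $|V(D)| \geq 2$, $D$ is not a directed cycle, and $D$ has no source. I would prove $\F(L(D)) = m - 2$ by showing that the minimum cardinality of a (strongly) critical set in $L(D)$ is exactly $2$, then invoking Proposition~\ref{prop:charnminusk} in the loopless case, or Proposition~\ref{prop:charnminuskloops} in the case where $D$ has loops. For the lower bound $\F(L(D)) \geq m - 2$, I would first argue that some vertex $v \in V(D)$ has $\deg^+_D(v) \geq 2$: otherwise $\deg^+_D(v) \leq 1$ for every $v$, and since $D$ has no source, Lemma~\ref{lem:degreecycles} combined with weak connectedness forces $D$ to be a single directed cycle, contradicting the hypothesis. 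Since $D$ has no source, pick any $u \in N^-_D(v)$; then $uv$ is a vertex of $L(D)$ with $N^+_{L(D)}(uv) = \{vw : vw \in A(D)\}$ and thus $\deg^+_{L(D)}(uv) \geq 2$. By Lemma~\ref{lem:ferrero}, any two-element subset of $N^+_{L(D)}(uv)$ is a strongly critical (hence critical) set of size $2$ in $L(D)$.

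For the upper bound $\F(L(D)) \leq m - 2$, I would rule out any (strongly) critical set of size one. Suppose for contradiction $\{uv\}$ is such a set in $L(D)$. For every arc $xy \in A(D)$ with $y = u$, the definition of $L(D)$ gives $uv \in N^+_{L(D)}(xy)$, so $|N^+_{L(D)}(xy) \cap \{uv\}| = 1$. In the strongly critical case this directly contradicts criticality; in the weakly critical (loopless) case one observes that any such $xy$ is automatically distinct from $uv$ (else $u = v$, a loop), so the contradiction persists. Thus no arc of $D$ ends at $u$, making $u$ a source of $D$, against the case assumption. Combining the two bounds gives $\F(L(D)) = m - 2$. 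The main obstacle I anticipate is the bookkeeping around loops in this third case: keeping straight which variant of critical/strongly critical is in play, and confirming that the same obstruction (an in-neighbor of $v$ producing a size-two strongly critical set via Lemma~\ref{lem:ferrero}, and the nonexistence of a size-one critical set forced by the no-source hypothesis) applies uniformly whether or not $D$ has loops.
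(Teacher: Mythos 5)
Your proposal is correct and follows essentially the same route as the paper: the cycle and source cases are handled by Theorem~\ref{thm:f0} and Observation~\ref{obs:sourceline} with Corollary~\ref{cor:source}, and the remaining case combines Lemma~\ref{lem:degreecycles} with Lemma~\ref{lem:ferrero} to exhibit a strongly critical set of size two, with the no-source hypothesis excluding a size-one critical set before invoking Proposition~\ref{prop:charnminusk} or \ref{prop:charnminuskloops}. The only (harmless) differences are that you apply Lemma~\ref{lem:degreecycles} to $D$ rather than to $L(D)$ and that you spell out the exclusion of a singleton critical set, which the paper leaves implicit.
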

\begin{proof}
If $D$ is a directed cycle, then $L(D)$ is as well, and we know that $\F(L(D))=0$ from Theorem \ref{thm:f0}.
Suppose $D$ has a source. Then by Observation \ref{obs:sourceline}, $L(D)$ has a source $uv$, and $\{uv\}$ forms a critical set in $L(D)$, giving us $\F(L(D)) = |V(L(D))|-1=m-1$.  

Finally, assume that $D$ is not a directed cycle and does not have a source. Then $L(D)$ does not have a source, and is not a directed cycle.  Since $D$ and therefore $L(D)$ are weakly connected, by Lemma \ref{lem:degreecycles}, there exists a vertex $uv \in V(L(D))$ such that $\deg_{L(D)}^+(uv) \geq 2$.   By Lemma \ref{lem:ferrero},  $S=\{xy, wv\}$ is a strongly critical set for any $xy, wv \in N_{L(D)}^+(uv)$.  Then, recalling that $L(D)$ does not have a source, by Proposition  \ref{prop:charnminusk}  or   \ref{prop:charnminuskloops} depending on whether $D$ has loops, $\F(L(D)) = |V(L(D)|-2 = m -2$.  \end{proof}

Two digraph families that can each be defined iteratively  using line digraphs and that are used in multiple applications are de Bruijn and  Kautz digraphs.  See \cite{kaashoek2003koorde, pevzner2001eulerian} for examples of the de Bruijn digraph and \cite{li2004graph, shen2015kautz} for examples of the Kautz digraph in applications.  

For integers $d \geq 2$ and $M \geq 1$, the \emph{de Bruijn digraph} $B(d,M)$ is defined to be the digraph with $V(B(d,M)) = \{x_0 x_1 \ldots x_{M-1} : x_i \in \mathbb{Z}_d \}$, and $A(B(d,M)) = \{(x_0 x_1 \ldots x_{M-1}, x_1 x_2 \ldots x_M)\}$.  The  \emph{Kautz digraph} $K(d,M)$ is defined to be the digraph with $V(K(d,M)) = \{x_0 x_1 \ldots x_{M-1} : x_i \in \mathbb{Z}_{d+1} \mbox{ and } x_i \neq x_{i+1} \}$, and $A(K(d,M)) = \{(x_0 x_1 \ldots x_{M-1}, x_1 x_2 \ldots x_M)\}$.    
Each Kautz digraph and each de Bruijn digraph has vertices of out-degree at least $2$, leading to the following corollary.

\begin{corollary}
If $D$ is a de Bruijn or a Kautz digraph, then $\F(D) = |V(D)|-2$.    
\end{corollary}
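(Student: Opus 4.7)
The plan is to apply the preceding theorem to $D = L(D')$. For $M \geq 2$ the standard line-digraph identities $B(d,M) = L(B(d,M-1))$ and $K(d,M) = L(K(d,M-1))$ put $D$ in this form, with $D'$ itself a de Bruijn or Kautz digraph of level $M-1$.

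Before invoking the theorem I would verify its hypotheses. The remark immediately preceding the corollary already records that every vertex of a de Bruijn or Kautz digraph has out-degree $d \geq 2$; combining this with Lemma \ref{lem:defined} shows that $\F(D)$ is defined, which is the extra requirement in the theorem whenever $D'$ has loops (as $B(d,M-1)$ does for $M \geq 2$). Next, $D'$ is weakly connected with $|V(D')| \geq 2$ (both standard for de Bruijn and Kautz digraphs); $D'$ is not a directed cycle because each of its vertices has out-degree $d \geq 2$; and $D'$ has no source because each vertex also has in-degree $d \geq 2$, as is immediate from the arc definitions of $B(d,M-1)$ and $K(d,M-1)$. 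The third case of the theorem then yields $\F(D) = |A(D')| - 2 = |V(L(D'))| - 2 = |V(D)| - 2$.

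The remaining base case $M = 1$ I would dispatch directly. $K(d,1)$ is the complete loopless digraph on $d+1$ vertices (both orientations of every edge), and $B(d,1)$ is the complete digraph on $d$ vertices with a loop at each vertex. In either case, for any pair $u, v \in V$ every other vertex has both $u$ and $v$ among its out-neighbors, so $V \setminus \{u, v\}$ is stalled and hence a FZFS, giving $\F(D) \geq |V| - 2$; the matching upper bound follows because any set of size $|V|-1$ has some filled vertex whose sole empty out-neighbor is the omitted vertex, so such a set is a ZFS. The only real obstacle is the bookkeeping between the loop and loopless settings and the separate $M=1$ treatment; all the substantive work has already been carried out in the preceding theorem.
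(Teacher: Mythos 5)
Your proposal is correct and follows the paper's intended route exactly: the paper leaves the corollary's proof implicit in the remark that these digraphs have all out-degrees at least $2$, relying on the preceding line-digraph theorem applied to $B(d,M)=L(B(d,M-1))$ and $K(d,M)=L(K(d,M-1))$, with the out-degree/in-degree bounds ruling out the directed-cycle and source cases and guaranteeing (via the lemma on when $\F$ is defined) that $\Z>0$ in the looped de Bruijn case. Your explicit treatment of the $M=1$ base case, which the paper glosses over, is a welcome extra bit of care but not a different method.
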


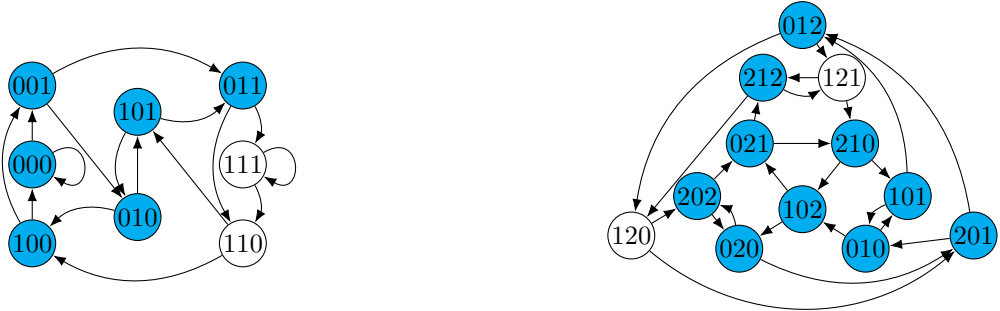
\begin{figure}[h!]
\begin{minipage}{0.475\textwidth}
\begin{center}
\begin{tikzpicture}[->, >= Latex,auto, scale=0.7]
\tikzstyle{vertex}=[draw, circle, inner sep=0.2mm]
\tikzset{edge/.style = {->,> = Latex}}
\tikzset{curvededge/.style = {->,> = Latex, bend right}}
\tikzset{lcurvededge/.style = {->,> = Latex, bend left}}
\node (v001) at (-2, 1.5)[vertex, fill=cyan]{001};
\node (v000) at (-2, 0)[vertex, fill=cyan]{000};
\node (v100) at (-2, -1.5)[vertex, fill=cyan]{100};
\node (v101) at (0,1)[vertex, fill=cyan]{101};
\node (v010) at (0, -1)[vertex, fill=cyan]{010};
\node (v011) at (2, 1.5)[vertex, fill=cyan]{011};
\node (v111) at (2, 0)[vertex]{111};
\node (v110) at (2, -1.5)[vertex]{110};
\draw [edge] (v001) to (v010);
\draw [lcurvededge] (v001) to (v011);
\draw [curvededge] (v101) to (v010);
\draw [curvededge] (v101) to (v011);

\draw [curvededge] (v011) to (v110);
\draw [lcurvededge] (v011) to (v111);
\draw [lcurvededge] (v111) to (v110);
\draw (v111) to [edge, out=30, in=330, looseness=5] (v111);  

\draw [curvededge] (v010) to (v100);
\draw [edge] (v010) to (v101);
\draw [lcurvededge] (v110) to (v100);
\draw [edge] (v110) to (v101);

\draw [edge] (v000) to (v001);

\draw (v000) to [edge, out=30, in=330, looseness=5] (v000);  

\draw [edge] (v100) to (v000);
\draw [lcurvededge] (v100) to (v001); 
\end{tikzpicture}
\end{center}
\end{minipage}
\hfill
\begin{minipage}{0.475\textwidth}
\begin{center}
\begin{tikzpicture}[auto, scale=0.7]
\tikzstyle{vertex}=[draw, circle, inner sep=0.2mm]
\tikzset{edge/.style = {->,> = Latex}}
\tikzset{curvededge/.style = {->,> = Latex, bend right}}
\tikzset{reallycurvededge/.style = {->,> = Latex, in=220, out=320}}
\node (v012) at (0,3)[vertex, fill=cyan]{012};
\node (v212) at (-0.75, 2)[vertex, fill=cyan]{212};
\node (v121) at (0.75, 2)[vertex]{121};
\node (v021) at (-1, 0.75)[vertex, fill=cyan]{021};
\node (v210) at (1,0.75)[vertex, fill=cyan]{210};
\node (v202) at (-2,-0.25)[vertex, fill=cyan]{202};
\node (v101) at (2,-0.25)[vertex, fill=cyan]{101};
\node (v102) at (0,-0.5)[vertex, fill=cyan]{102};
\node (v120) at (-3.25,-1)[vertex]{120};
\node (v020) at (-1.2,-1.25)[vertex, fill=cyan]{020};
\node (v010) at (1.2,-1.25)[vertex, fill=cyan]{010};
\node (v201) at (3.25,-1)[vertex, fill=cyan]{201};
\draw[curvededge](v012)to(v120);
\draw[edge](v012)to(v121);
\draw[edge](v010)to(v101);
\draw[edge](v010)to(v102);
\draw[edge](v021)to(v210);
\draw[edge](v021)to(v212);
\draw[curvededge](v020)to(v201);
\draw[curvededge](v020)to(v202);
\draw[edge](v212)to(v120);
\draw[curvededge](v212)to(v121);
\draw[edge](v210)to(v101);
\draw[edge](v210)to(v102);
\draw[edge](v202)to(v020);
\draw[curvededge](v201)to(v012);
\draw[edge](v201)to(v010);
\draw[edge](v202)to(v021);
\draw[edge](v121)to(v210);
\draw[edge](v121)to(v212);
\draw[reallycurvededge](v120)to(v201);
\draw[edge](v120)to(v202);
\draw[curvededge](v101)to(v010);
\draw[curvededge](v101)to(v012);
\draw[edge](v102)to(v020);
\draw[edge](v102)to(v021);
\end{tikzpicture}
\end{center}
\end{minipage}
\caption{de Bruijn digraph $B(2,3)$ and Kautz digraph $K(2,3)$ with FZFS in cyan}
\label{fig:minimalmaximal}
\end{figure}

\section{Open problems}
\label{sec:openproblems}
Since computing $\F(G)$ for undirected graphs was found to be NP-hard in \cite{shitov2017complexity}, it follows that the same is true for digraphs.  However, it is unknown whether or not this remains true if we restrict to oriented graphs.  Indeed, at the time this paper was written, this result had not been established for the zero forcing number $\Z(\overrightarrow{G})$ where $\overrightarrow{G}$ is an oriented graph.  

While we considered line digraphs with loops in Section \ref{sec:line}, more general investigation of $\F(D)$ in the case $D$ has loops would be interesting.  In particular, a characterization of digraphs with loops that have $\F(D)$ undefined (and therefore $\Z(D)=0$) is a possible starting point.  

We can also consider the following generalization of this problem.  A ZFS $S$ is a \emph{minimal} ZFS if deleting any vertex from $S$ results in the new set being a FZFS.  Similarly, a FZFS $S$ is a \emph{maximal} FZFS if adding any vertex to $S$ results in the new set being a ZFS.  Certainly, any minimum ZFS is also minimal, and any maximum FZFS is also maximal.  However, for some digraphs there exist examples of minimal ZFS and maximal FZFS that are not minimum and not maximum respectively, as in Figure \ref{fig:minimalmaximal}.

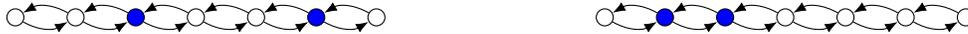
\begin{figure}[h!]
\begin{minipage}{0.475\textwidth}
\begin{center}
\begin{tikzpicture}[auto]
\tikzstyle{vertex}=[draw, circle, inner sep=0.8mm]
\tikzset{edge/.style = {->,> = Latex}}
\tikzset{curvededge/.style = {->,> = Latex, bend right}}
\foreach \x in {3, 6}{
\node (v\x) at (0.8*\x,0)[vertex, fill=blue]{};}
\foreach \x in {1, 2, 4, 5, 7}{
\node (v\x) at (0.8*\x,0)[vertex]{};}
\foreach \x  [evaluate=\x as \y using {int(\x+1)}]  in {1, 2, 3, 4, 5, 6}{
\draw[curvededge](v\x)to(v\y);
\draw[curvededge](v\y)to(v\x);}
\end{tikzpicture}
\end{center}
\end{minipage}
\begin{minipage}{0.475\textwidth}
\begin{center}
\begin{tikzpicture}[auto]
\tikzstyle{vertex}=[draw, circle, inner sep=0.8mm]
\tikzset{edge/.style = {->,> = Latex}}
\tikzset{curvededge/.style = {->,> = Latex, bend right}}
\foreach \x in {2, 3}{
\node (v\x) at (0.8*\x,0)[vertex, fill=blue]{};}
\foreach \x in {1, 4, 5, 6, 7}{
\node (v\x) at (0.8*\x,0)[vertex]{};}
\foreach \x  [evaluate=\x as \y using {int(\x+1)}]  in {1, 2, 3, 4, 5, 6}{
\draw[curvededge](v\x)to(v\y);
\draw[curvededge](v\y)to(v\x);}
\end{tikzpicture}
\end{center}
\end{minipage}
\caption{A maximal FZFS that is not maximum and a minimal ZFS that is not minimum}
\label{fig:minimalmaximal}
\end{figure}

Let $Z_m(D)$ denote the set of minimal ZFS of $D$, and let $F_M(D)$ denote the set of maximal FZFS of $D$.  If $\F(D) < \Z(D)$, then $F_M(D)$ is precisely the set of maximum FZFS, and $Z_m(D)$ is precisely the set of minimum ZFS.  However, it would be interesting to study these parameters for digraphs with $\F(D) \geq \Z(D)$.  For example, we can ask which integers $k$ with $0 < k < n$ have the property that there exists a ZFS $S \in Z_m(D)$ with $|S| = k$.  We can ask the analogous question for maximal FZFS as well.  

\bigskip

 \bibliography{directedgraphsbib}

\end{document}